\author{{\L}ukasz Garncarek}
\title{Boundary representations of hyperbolic groups}
\subjclass[2010]{22D10, 20F67, 43A65} 
\keywords{hyperbolic group, boundary, unitary representation,
  irreducible representation}
\address{Institute of Mathematics, Polish Academy of Sciences, ul. {\'S}niadeckich 8, 00-656 Warszawa, Poland}
\email{lukgar@impan.pl}
\thanks{Research supported by the National Science Centre grant
  DEC-2012/05/N/ST1/02829. During part of the work on this paper the
  author was also supported by a scholarship of the Foundation for
  Polish Science. Part of the work on this paper was conducted during
  the author's internship at the Warsaw Center of Mathematics and
  Computer Science.}
\newtheorem{theorem}{Theorem}[section]
\newtheorem*{thm-irr}{Irreducibility}
\newtheorem*{thm-equiv}{Equivalence}
\newtheorem*{conjecture}{Conjecture}
\newtheorem{lemma}[theorem]{Lemma}
\newtheorem{proposition}[theorem]{Proposition}
\newtheorem{corollary}[theorem]{Corollary}
\theoremstyle{definition}
\theoremstyle{remark}
\newtheorem{remark}[theorem]{Remark}
\numberwithin{equation}{section}
\newcommand{\RR}{\mathbb{R}}
\newcommand{\ZZ}{\mathbb{Z}}
\newcommand{\CC}{\mathbb{C}}
\newcommand{\one}{\boldsymbol{1}}
\DeclareMathOperator{\diam}{diam}
\DeclareMathOperator{\pr}{pr}
\DeclareMathOperator{\Isom}{Isom}
\DeclareMathOperator{\Lip}{Lip}
\DeclareMathOperator*{\radlim}{rad\,lim}
\newcommand{\norm}[1]{\left\lVert#1\right\rVert}
\newcommand{\abs}[1]{\left\lvert#1\right\rvert}
\newcommand{\conj}{\overline}
\newcommand{\lasymp}{\prec}
\newcommand{\rasymp}{\succ}
\newcommand{\aless}{\apprle}
\newcommand{\agtr}{\apprge}
\newcommand{\bd}{\partial}
\newcommand{\tP}{\widetilde{P}}
\newcommand{\bG}{{\bd\Gamma}}
\newcommand{\wt}{\widetilde}
\begin{document}

\maketitle

\begin{abstract}
  Let $\Gamma$ be a Gromov hyperbolic group, endowed with an arbitrary
  left-invariant hyperbolic metric, quasi-isometric to a word
  metric. The action of $\Gamma$ on its boundary $\bG$ endowed with
  the Patterson-Sullivan measure $\mu$, after an appropriate
  normalization, gives rise to a faithful unitary representation of
  $\Gamma$ on $L^2(\bG,\mu)$. We show that these representations are
  irreducible, and
  give criteria for their unitary equivalence in terms of the metrics
  on $\Gamma$. Special cases include quasi-regular representations on
  the Poisson boundary.
\end{abstract}

\section{Introduction}
\label{sec:introduction} 

Any action of a group $G$ on a measure space $(X,\mu)$, preserving
the class of $\mu$, induces an action on the space of measurable
functions on $X$. It can be normalized to obtain a unitary
representation of $G$ on $L^2(X,\mu)$. This
construction generalizes the notion of a quasi-regular representation,
which we obtain when $X$ is a homogeneous space for $G$; we will still refer
to these representations as quasi-regular.

Irreducibility of such quasi-regular representations is a mixing-type
condition, strictly stronger than ergodicity. Indeed, for non-ergodic
actions the space $L^2(X,\mu)$ decomposes into spaces of functions
supported on the nontrivial invariant sets, and on the other hand, any
ergodic action of an abelian group, such as the action of $\ZZ$ on the
circle by powers of an irrational rotation, gives a reducible
representation. There are many natural examples of irreducible
quasi-regular representations:
\begin{itemize}
\item the natural action of the group of diffeomorphisms of a manifold
  $M$, or some of its subgroups preserving additional structure on $M$
  \cite{Vershik1975,Garncarek2010},
\item the action of the Thompson's groups $F$ and $T$ on the unit interval
  and the unit circle \cite{Garncarek2012,Dudko2015},
\item the action of a lattice of a Lie group on its Furstenberg
  boundary \cite{Bekka2002,Cowling1991}, 
\item the action of the automorphism group of a regular tree on its
  boundary \cite{Figa-Talamanca1991},
\item the action of a free group on its boundary
  \cite{Figa-Talamanca1983,Figa-Talamanca1994},
\item the action of the fundamental group of a compact strictly negatively
  curved Riemannian manifold $M$ on the boundary of the universal
  cover of $M$ \cite{Bader2011}.
\end{itemize}

The exact relationship between irreducibility of the quasi-regular
representation and the dynamical properties of the action of $G$ on
$X$ is fully understood only in the case of discrete groups acting on
discrete spaces \cite{Binder1993,Burger1997,Corwin1975}. The genuine
quasi-regular representations are also better understood, via the
notion of imprimitivity system \cite{Mackey1976}. For
general locally compact groups, irreducibility of the quasi-regular
representations was conjectured in \cite{Bader2011} for another broad
class of actions.

\begin{conjecture}
  For a locally compact group $G$ and a spread-out probability measure
  $\mu$ on $G$, the quasi-regular representation
  associated to the action of $G$ on the $\mu$-boundary of $G$ is irreducible.
\end{conjecture}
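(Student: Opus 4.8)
The conjecture in full generality is open; what follows is the strategy that settles the cases in the list above, and which I expect the body of this paper to carry out for the hyperbolic instance. Write $(B,\nu)$ for the $\mu$-boundary, $c$ for the Radon--Nikodym cocycle of the $G$-action on it, $\pi$ for the associated representation on $L^2(B,\nu)$ — the quasi-regular representation normalized by $c^{1/2}$ — and $\varphi(g)=\langle\pi(g)\one,\one\rangle$ for its spherical function, $\one$ being the constant function. The plan rests on two analytic inputs. \emph{(A) A weak-containment bound:} $\abs{\langle\pi(g)\eta,\eta'\rangle}\aless\varphi(g)\,\norm\eta\,\norm{\eta'}$ for all $g\in G$ and all $\eta,\eta'$; this is in essence the assertion that $\pi$ is weakly contained in the regular representation, and its role is to keep the renormalized operators $\pi(g)/\varphi(g)$ uniformly bounded, so that weak-operator convergence, once checked on a dense subspace, holds on all of $L^2(B,\nu)$. \emph{(B) A mixing statement:} along suitable sequences $g_n\to\infty$ in $G$ — for instance $\nu$-almost every sample path of the $\mu$-walk, with forward limit point $\xi$ and backward limit point $\xi'$ in $B$ — the ratios $\langle\pi(g_n)\eta,\eta'\rangle/\varphi(g_n)$ converge, for $\eta,\eta'$ in a fixed dense subspace, to $\langle\eta,\phi_{\xi'}\rangle\,\overline{\langle\eta',\phi_\xi\rangle}$, where $\set{\phi_\zeta}$ is a measurable family of nonzero vectors indexed by $\zeta$ in a full-measure subset of $B$ and total there.

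Granting (A) and (B), irreducibility is formal: the two inputs together say that $\pi(g_n)/\varphi(g_n)$ converges in the weak operator topology, along each such sequence, to the rank-one operator $R_{\xi,\xi'}\colon\eta\mapsto\langle\eta,\phi_{\xi'}\rangle\,\phi_\xi$. Let $T\in\pi(G)'$. Then $T$ commutes with each $\pi(g_n)/\varphi(g_n)$, hence with each limit $R_{\xi,\xi'}$; spelling out $TR_{\xi,\xi'}=R_{\xi,\xi'}T$ and testing against a vector not orthogonal to $\phi_{\xi'}$ gives $T\phi_\xi=\lambda_\xi\phi_\xi$ and then $T^*\phi_{\xi'}=\overline{\lambda_\xi}\phi_{\xi'}$ with a scalar $\lambda_\xi$ depending on $\xi$ alone. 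Since the left-hand side of the second relation does not involve $\xi$, the $\lambda_\xi$ collapse to a single constant $\lambda$, and $T^*\phi_{\xi'}=\bar\lambda\phi_{\xi'}$ for a full-measure set of $\xi'$; totality of $\set{\phi_{\xi'}}$ now forces $T^*=\bar\lambda\,\id$, so $T=\lambda\,\id$ and $\pi$ is irreducible.

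The whole difficulty is thereby concentrated in (A) and (B), and this is precisely where the general case is stuck: for an arbitrary spread-out $\mu$ there is no geometric model of $(B,\nu)$, no closed form for $c$ or $\varphi$, and no regularity of $\nu$, so neither the weak-containment bound nor the identification and totality of the $\phi_\zeta$ is within reach — which is why the conjecture remains open. In the hyperbolic setting of this paper the boundary is realized as $\bG$ with $\nu$ a Patterson--Sullivan measure, and this geometry supplies both inputs. The cocycle $c$ is comparable to $e^{-D\beta_\xi(\cdot)}$ for a Busemann function $\beta_\xi$ and $D$ the critical exponent, so $\varphi(g)$ is pinned down up to a polynomial factor — which is exactly why a Haagerup-type rapid-decay estimate is needed for (A) — while the shadow lemma turns the asymptotics of $\pi(g)$, as $g$ runs out towards $\xi$, into the statement that $c(g,\cdot)^{1/2}\,d\nu$ concentrates on the shadow cast by $g$, a set shrinking to $\set\xi$; the limit vectors $\phi_\xi$ then behave like normalized indicators of a neighbourhood basis of $\xi$, and Ahlfors regularity of $\nu$ makes them total. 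I expect the bulk of the paper to be these quantitative boundary estimates — shadow lemma, cocycle asymptotics, rapid decay — and the somewhat delicate passage from them to the weak-operator convergence above.
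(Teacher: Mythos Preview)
The conjecture is stated in the paper as an open problem, and you rightly say so; there is no proof to compare against. What the paper does prove is the hyperbolic special case (its Theorem~\ref{thm:irreducibility}), so the relevant comparison is between your sketch of that case and the paper's actual argument.

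The overall architecture you describe---renormalize $\pi(g)$ by the spherical function, pass to weak-operator limits that are rank-one, conclude via the commutant---is indeed the Bader--Muchnik philosophy the paper follows. But two of your ingredients do not survive scrutiny, and the paper's route is designed precisely to avoid them. First, your input~(A) is false as written: the bound $\abs{\langle\pi(g)\eta,\eta'\rangle}\lasymp\varphi(g)\,\norm{\eta}\,\norm{\eta'}$ for \emph{all} $\eta,\eta'$ would force $\norm{\pi(g)/\varphi(g)}$ to be uniformly bounded, whereas $\pi(g)$ is unitary and $\varphi(g)=\lVert P_g^{1/2}\rVert_1\asymp\omega^{-\abs g/2}(1+\abs g)\to 0$; testing with $\eta'=\pi(g)\eta$ already breaks it. Weak containment in the regular representation (which the paper obtains from Adams' amenability of the boundary action, not from rapid decay) does not give this pointwise estimate. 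Second, your limit vectors $\phi_\xi$, which you describe as normalized indicators of shrinking balls at $\xi$, do not exist in $L^2(\bG,\mu)$: those indicators tend weakly to zero, so the rank-one operators $R_{\xi,\xi'}$ you aim for are not bounded operators and cannot lie in the weak closure of $\pi(\Gamma)$.

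The paper sidesteps both issues by never taking limits along individual sequences $g_n$. Instead it forms, for each positive kernel $K\in L^\infty(\bG^2)$, a \emph{weighted average}
\[
  S_R=\sum_{g\in A_R}\Big(\int_{V_g}K\,d\mu^2\Big)\,\wt\pi(g),\qquad \wt\pi(g)=\pi(g)/\varphi(g),
\]
where $\{V_g\}$ is a partition of $\bG^2$ subordinate to the double shadows $\Sigma_2(g)$. The uniform bound that replaces your~(A) is not on the individual $\wt\pi(g)$ but on these averages: Proposition~\ref{prop:poisson-cone-avg-bounded} gives $\sum_{g\in A_R}\tP_g\lasymp\omega^R$ in $L^\infty$, which, combined with $\mu^2(V_g)\lasymp\omega^{-R}$, makes $\norm{S_R}$ bounded via Riesz--Thorin. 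Convergence $S_R\to T_K$ is checked on Lipschitz functions using the approximation $\langle\wt\pi(g)\phi,\psi\rangle\approx\phi(\check g)\overline{\psi(\hat g)}$ (your input~(B), which is correct), and the integral against $K$ smears the point evaluations into an honest bounded operator. Taking $K\equiv 1$ yields the rank-one projection onto $\CC\one$, and cyclicity of $\one$ finishes the proof. In short: your instinct about the mechanism is right, but the uniform boundedness and the existence of the limiting rank-one operators both require the averaging step, which is the paper's main technical contribution.
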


In this work we study the representations of hyperbolic groups
associated with actions on their Gromov boundaries endowed with the
Patterson-Sullivan measures. Following \cite{Bader2011}, we call them
\emph{boundary representations}. Our main result states that they are
always irreducible. Moreover, when the metric on the group is
quasi-isometrically perturbed, the class of the Patterson-Sullivan
measure varies, thus leading to a potentially vast supply of
non-equivalent irreducible representations. Indeed, we show that the
only unitary equivalences between the boundary representations arise
from rough similarities of the corresponding metrics. Our results thus
generalize the work of Bader and Muchnik \cite{Bader2011}.

The irreducibility of the quasi-regular representations can also be
seen in a slightly different light. As far as we know, this is the
first general construction of a family of faithful irreducible unitary
representations of an arbitrary hyperbolic group. In general,
providing such constructions for large classes of groups, for which
there is no structural description allowing to reduce the problem to
some better understood cases, seems to be a difficult task.

The line of our proof can be said to lie within bounded distance from
the arguments of Bader and Muchnik, which we generalize to the setting
of arbitrary hyperbolic groups, circumventing some of the difficulties
they had to deal with. Basically, we construct a family of operators
in the von Neumann algebra of the representation, analogous to the
operators used in their approach. However, since they try to obtain
them as weak operator limits of some arithmetic averages, in order
to prove convergence they need to resort to a result of Margulis,
describing the asymptotic behavior of the number of certain geodesic
segments on a manifold. By using weighted averages and choosing
suitable weights, we omit the necessity of knowing such asymptotics,
and obtain a more self-contained and simpler proof, applicable in a
wider context.

Recently, Uri Bader has informed us about an unpublished work of Roman
Muchnik, establishing irreducibility of quasi-regular representations
of hyperbolic groups associated with their actions on Poisson
boundaries of finitely supported symmetric random walks. This is also
a special case of our result, which we explain in
Section~\ref{sec:green-metr-poiss}. 

\subsection{Organization of the paper}
\label{sec:organization-paper}

In Sections~\ref{sec:preliminaries} and \ref{sec:geometric-setting} we
introduce some notational conventions and definitions from geometric
group theory, discuss some basic results concerning hyperbolic groups
and their boundaries, and finally define the class of representations
we are going to consider. All the geometry is contained in
Section~\ref{sec:shadows-cones}, where we explore some subsets of the
group, estimate their growth and show that they are nicely
distributed.  Section~\ref{sec:oper-posit-cone} uses these estimates
to construct certain operators in the von Neumann algebras of the
boundary representations. In Section~\ref{sec:properties} we gather
all the previous results into the proof of irreducibility of the
boundary representations. We also explain why they are weakly
contained in the regular
representation. Section~\ref{sec:classification} contains the
classification of the boundary representations with respect to unitary
equivalence. Finally, in Section~\ref{sec:applications} we discuss two
examples with more explicitly defined groups and metrics. We have a
closer look at the case of fundamental groups of negatively curved
manifolds, and we also explain how the conjecture mentioned in the
Introduction follows for a certain class of random walks on a
hyperbolic group.

The main text is followed by an appendix, in which we prove that the Patterson-Sullivan measures arising in our context are doubly ergodic. This result is needed in the proof of the Classification Theorem \ref{thm:inequivalence}. The proof of double ergodicity is based on ideas explained to us by Uri Bader and Alex Furman, and is a special case of a more general theorem, stating that the Patterson-Sullivan measures are isometrically doubly ergodic, whose proof will appear in their forthcoming paper \cite{Bader2014}.

\subsection{Acknowledgments}
\label{sec:acknowledgments}

We wish to thank Uri Bader, Michael Cowling, Alex Furman, Tadeusz
Ja\-nu\-sz\-kiewicz, and Tim Steger for their remarks and helpful
discussions. We are also grateful to Pawe{\l} J{\'o}ziak and Adam
Skalski for careful reading of the manuscript and their comments,
which helped to improve the text. Last but not least, we are indebted
to our advisor Jan Dymara for introducing us to the subject of
boundary representations, his ongoing support, and numerous
discussions.

\section{Preliminaries}
\label{sec:preliminaries}
In this section we introduce the basic notions associated with
hyperbolic spaces and groups. We start by fixing some notational
conventions for various kinds of estimates, which we will use
throughout the paper in order to avoid the aggregation of
non-essential constants and hopefully making the presentation more
lucid.  Then we introduce the basic terminology related to
quasi-isometries, define hyperbolic spaces and groups, and finally,
discuss the notion of the Gromov boundary. For details on these
subjects see \cite[Chapters III.H.1 and III.H.3]{Bridson1999}.

\subsection{Estimates}
\label{sec:estimates}

In the paper we will work with additive and multiplicative
estimates. In order to avoid the escalation of constants coming from
such estimates, we will suppress them using the following
notation. Let $f,g$ be functions on a set $X$. If there exists $C>0$
such that $f(x) \leq Cg(x)$ for all $x$, we write $f \lasymp g$. If
both $f \lasymp g$ and $g \lasymp f$ hold, we write $f \asymp
g$. Analogously, for additive estimates, $f \aless g$ if there exists
$c$ such that $f \leq g + c$, and $f \approx g$ if both $ f \aless g$
and $g\aless f$ hold. The variables in which the estimates are assumed
to be uniform will be either clear from context or explicitly
mentioned. Sometimes, to indicate that we do not care whether the
estimate is uniform in some of the variables (which does not mean that
we claim it is not), we write them as subscripts to the symbol of the
corresponding estimate, e.g.\ $f(x,y) \aless_x g(x,y)$ need not be
uniform in $x$.

\subsection{Quasi-isometries}
\label{sec:quasi-isometries}
Let $(X,d_X)$ and $(Y,d_Y)$ be metric spaces. Take $L\geq 1$ and
$C\geq0$. A map $\phi\colon X\to Y$ satisfying the condition
\begin{equation}
  \frac{1}{L}d_X(p,q) - C \leq d_Y(\phi(p),\phi(q)) \leq Ld_X(p,q) + C
\end{equation}
for all $p,q\in X$ is called an \emph{(L,C)-quasi-isometric embedding}. If the image of $\phi$ is a $C$-net in $Y$, i.e.\ its
$C$-neighborhood covers $Y$, or equivalently, if there exists a
quasi-isometric embedding $\psi\colon Y\to X$, called the
\emph{quasi-inverse of $\phi$}, such that $d_X(x, \psi\phi(x))$ and
$d_Y(y,\phi\psi(y))$ are uniformly bounded functions on $X$ and $Y$
respectively, then $\phi$ is an \emph{(L,C)-quasi-isometry}. A
$(1,C)$-quasi-isometry is called a \emph{$C$-rough isometry}. A
quasi-isometry $\phi$ satisfying $d_Y(\phi(x),\phi(y))\approx L
d_X(x,y)$ with additive constant $C$ is an \emph{(L,C)-rough
  similarity}.

An $(L,C)$-quasi-isometric embedding $\gamma\colon \RR\to X$ is called
an \emph{(L,C)-quasi-geodesic} in $X$. Similarly one defines
quasi-geodesic rays and segments, and their roughly geodesic variants.
We say that $X$ is an \emph{$(L,C)$-quasi-geodesic space}, if any two
points in $X$ can be joined by an $(L,C)$-quasi-geodesic segment. A
\emph{$C$-roughly geodesic space} is defined in the same manner. We
will later fix the constants $L$ and $C$ and suppress them from
notation.

\subsection{Hyperbolic spaces and groups}
\label{sec:hyperbolic-spaces-and-groups}
Let $(X,d)$ be a metric space. For any base-point $o\in X$ one defines
the Gromov product $(\cdot,\cdot)_o\colon X\times X \to [0,\infty)$
with respect to $o$ as
\begin{equation}
  (x,y)_o = \frac{1}{2}(d(x,o)+d(y,o)-d(x,y)).
\end{equation}
A different choice of the base-point leads to another Gromov product, satisfying
\begin{equation}
  \label{eq:Gromov-product-basepoint-change}
  \abs{(x,y)_o - (x,y)_p} \leq d(o,p).
\end{equation}
If the Gromov product on $X$ satisfies the estimate
\begin{equation}
  \label{eq:hyperbolicity-inequality}
  (x,y)_o \agtr \min\{ (x,z)_o, (y,z)_o \}
\end{equation}
for some (equivalently, for every---but with a different constant)
base-point $o\in X$, the space $X$ is said to be \emph{hyperbolic}. We
may iterate~\eqref{eq:hyperbolicity-inequality}, to obtain
\begin{equation}
  (x_1,x_n)_o \agtr \min\{(x_1,x_2)_o,(x_2,x_3)_o,\dots,(x_{n-1},x_n)_o\},
\end{equation}
with constants depending only on $n$. The property of being hyperbolic
is preserved by quasi-isometries within the class of geodesic
spaces. In case of general metric spaces, it is possible to
quasi-isometrically perturb a hyperbolic metric and obtain a
non-hyperbolic one (see~\cite[Proposition A.11]{Blachere2011}).

A finitely generated group $\Gamma$ is hyperbolic if its Cayley graph
with respect to some finite set of generators is hyperbolic. As Cayley
graphs of a given group are geodesic and quasi-isometric to each
other, this notion does not depend on the generating set.  The
quasi-isometric metrics induced on the group by the path metrics on
its Cayley graphs are called the \emph{word metrics}.  We will denote
by $\mathcal{D}(\Gamma)$ the class of all hyperbolic left-invariant
metrics on $\Gamma$ (not necessarily coming from an action on a
geodesic space), quasi-isometric to a word metric through the
identity map of \(\Gamma\). Finally, a hyperbolic group is
\emph{non-elementary} if it does not contain a cyclic subgroup of
finite index.

\subsection{The Gromov boundary}
\label{sec:gromov-boundary}

Now, assume that $X$ is hyperbolic and has a fixed base-point $o\in X$,
which we will omit in the notation for the Gromov product. We will
also denote $\abs{x} = d(x,o)$. A sequence $(x_n)\subset X$
\emph{tends to $\infty$} if
\begin{equation}
\lim_{i,j\to\infty} (x_i,x_j) =\infty.
\end{equation}
Two such sequences $(x_n)$ and $(y_n)$ are \emph{equivalent} if
$\lim_{n\to\infty}(x_n,y_n)=\infty$. By~\eqref{eq:Gromov-product-basepoint-change}
these notions are independent of the base-point. The boundary of
$X$, denoted $\bd X$, is the set of equivalence classes of sequences
tending to infinity.  The space $\overline{X}=X\cup\bd X$ can be given
a natural topology making it a compactification of $X$, on which the
isometry group $\Isom(X)$ acts by homeomorphisms.

The Gromov product can be extended (in a not necessarily continuous way) to $\overline{X}$ in such a way
that the estimate~\eqref{eq:hyperbolicity-inequality} is still
satisfied (with different constants). One simply represents elements
of $X$ as constant sequences, and for $x,y\in\overline{X}$ defines
\begin{equation}
  (x,y) = \sup\liminf_{i,j\to\infty} (x_i,y_j),
\end{equation}
where the supremum is taken over all representatives $(x_i)$ and
$(y_i)$ of $x$ and $y$. A sequence $(x_i)\subset X$
converges to $\xi\in\bd X$ if and only if $(x_i,\xi)\to\infty$, so in
particular, representatives of $\xi$ are exactly the sequences in $X$
converging to $\xi$. By~\cite[Remark 3.17]{Bridson1999}, we have
\begin{equation}
  \label{eq:extended-gromov-product-representative-estimate}
  \liminf_{i,j\to\infty}(x_i,y_j)\approx(x,y)
\end{equation}
whenever $x_i\to x$ and $y_i\to y$.

The topology of $\bd X$ is metrizable. For sufficiently small
$\epsilon > 0$ there exists a metric $d_\epsilon$ on $\bd X$,
compatible with its topology, satisfying
\begin{equation}
  d_\epsilon(\xi,\eta) \asymp_\epsilon e^{-\epsilon(\xi,\eta)}.
\end{equation}
Such a metric is called a \emph{visual metric}.

We will later use the fact that for a hyperbolic group $\Gamma$ the
only $\Gamma$-equivariant homeomorphism $\phi$ of $\bG$ is the
identity map. It follows from the fact that any element of $\Gamma$ of
infinite order has exactly one attracting point in $\bG$, which is
therefore fixed by $\phi$, and the attracting points of all such
elements form a dense subset \cite[Proposition 4.2 and Theorem
4.3]{IlyaKapovich}.

\section{The geometric setting}
\label{sec:geometric-setting}

In this section we describe some ways to deal with non-geodesic
hyperbolic metrics. As the representations we will consider depend on
the metric on the group, this will allow to investigate a class of
representations much wider than those obtained from the word
metrics. Everything we need in this regard is contained in the
papers~\cite{Blachere2011,Bonk2000}.

In~\cite{Blachere2011} the notions of a quasi-ruler and quasi-ruled
space are introduced, and the fundamental properties of the
Patterson-Sullivan measures for quasi-ruled hyperbolic spaces,
generalizing the results of \cite{Coornaert1993}, which apply only to
metrics coming from proper actions on geodesic spaces, are
developed. The article~\cite{Bonk2000} studies boundaries of almost
geodesic hyperbolic spaces, and is a useful reference for some basic lemmas.

It turns out that the classes of hyperbolic quasi-ruled spaces and
hyperbolic almost geodesic spaces are the same and equal to the class
of roughly geodesic hyperbolic spaces. We discuss the notion of a
quasi-ruled space only in order to formulate
Theorem~\ref{thm:bhm}. Afterwards, all the arguments will be based on
the notion of a rough geodesic.

\subsection{Roughly geodesic hyperbolic spaces}
\label{sec:roughly-geod-hyperb}

For $\tau\geq0$ a \emph{$\tau$-quasi-ruler} is a quasi-geodesic
$\gamma\colon\RR\to X$ satisfying for all $s<t<u$ the condition
\begin{equation}\label{eq:def-quasi-ruler}
  (\gamma(s),\gamma(u))_{\gamma(t)} \leq \tau.
\end{equation}
The space $X$ is said to be \emph{$(L,C,\tau)$-quasi-ruled} if it is a
$(L,C)$-quasi-geodesic space, and every $(L,C)$-quasi-geodesic is a
$\tau$-quasi-ruler. By \cite[Theorem
A.1]{Blachere2011}, if $\phi\colon X\to Y$ is a quasi-isometry with
$X$ hyperbolic and geodesic, then $Y$ is hyperbolic if and only if it
is quasi-ruled. It follows that for a hyperbolic group $\Gamma$, all
the metrics in the class $\mathcal{D}(\Gamma)$ are quasi-ruled.

By~\cite[Lemma A.2]{Blachere2011}, for every $L$, $C$, and $\tau$
there exists $K>0$ such that every $(L,C,\tau)$-quasi-ruled space is
$K$-roughly geodesic. On the other hand, it is clear that a
$K$-roughly geodesic space is $(1,K,3K/2)$-quasi-ruled. By \cite[Proposition
5.2(1)]{Bonk2000}, the hyperbolic spaces studied therein are also
exactly the roughly geodesic hyperbolic spaces.

Now suppose that $X$ is a roughly geodesic hyperbolic space. Every
roughly geodesic ray $\gamma$ in $X$ converges to an endpoint
$\gamma(\infty)$ in the boundary. The converse statement is also true,
i.e.\ every point in $\bd X$ is the endpoint of some $K$-roughly
geodesic ray, where $K$ depends only on $X$ \cite[Proposition
5.2(2)]{Bonk2000}. In a similar fashion, every pair of distinct points
of $\bd X$ can be joined by a $K$-roughly geodesic line
\cite[Proposition 5.2(3)]{Bonk2000}. From now on, when we use the
terms \emph{roughly geodesic segment/ray/line} without specifying the
constant, we always think of the universal constants from the
definition of a roughly geodesic space and the remark above.

By \cite[Proposition 5.5]{Bonk2000}, if the map $\phi\colon X\to Y$ is a
$(L,C)$-quasi-isometry of roughly geodesic hyperbolic spaces, their
Gromov products satisfy the estimates
\begin{equation}
  \frac{1}{L} (x,y)_z \aless (\phi(x),\phi(y))_{\phi(z)} \aless
  L(x,y)_z
\end{equation}
 uniformly for all $x,y,z\in X$. As a consequence, for a
hyperbolic group $\Gamma$ all the metrics in $\mathcal{D}(\Gamma)$
give rise to exactly the same boundary.

\subsection{Quasi-conformal measures}
\label{sec:quasi-conf-meas}

Consider a roughly geodesic hyperbolic space $X$ with a base-point
$o\in X$, and a non-elementary hyperbolic group $\Gamma\subseteq
\Isom(X)$ which acts on $X$ properly and cocompactly.  A measure $\mu$
on $(\bd X,d_\epsilon)$ is said to be \emph{$\Gamma$-quasi-conformal
  of dimension $D$} if it is quasi-invariant under the action of
$\Gamma$, and the corresponding Radon-Nikodym derivatives satisfy the
estimate
\begin{equation}
  \frac{dg_*\mu}{d\mu}(\xi) \asymp e^{\epsilon D (2 (go,\xi)-d(o,go))}
\end{equation}
uniformly in $\xi$ and
$g$. Since $d(o,go)\approx_{o,p}d(p,gp)$, this notion is independent of the
choice of $o$. Moreover, being $\Gamma$-quasi-conformal does not depend
on $\epsilon$, as for different values of $\epsilon$ only the
dimension $D$ changes. Finally, $\mu$ is \emph{Ahlfors regular of dimension $D$} if it
satisfies the estimate
\begin{equation}
  \mu(B_{\bd X}(\xi,\rho))\asymp \rho^D
\end{equation}
uniformly in $\xi$ and $\rho\leq\diam{\bd X}$. In particular, since $\bd X$ is compact,
any Ahlfors regular measure on $\bd X$ is finite.

Recall that the Hausdorff measure of a metric space $Y$ is
defined as follows. First, for $\alpha\geq 0$ one defines the
$\alpha$-dimensional Hausdorff measure $\mathcal{H}_\alpha$ as
\begin{equation}
  \mathcal{H}_\alpha(E) = \lim_{\theta\to 0^+}\inf\Big\{ \sum_i (\diam{U_i})^\alpha
  : E\subseteq\bigcup_i U_i\;\text{and}\;\diam{U_i}\leq \theta\Big\}
\end{equation}
for every Borel set $E\subseteq Y$. Then, the Hausdorff dimension of $Y$ is the number
\begin{equation}
  \dim_H Y = \inf\{\alpha : \mathcal{H}_\alpha(Y)=0\} = \sup\{\alpha :
  \mathcal{H}_\alpha(Y) = \infty\}. 
\end{equation}
The Hausdorff measure on $Y$ is the $(\dim_H Y)$-dimensional Hausdorff
measure. See \cite{Rogers1970} for details on Hausdorff measures.

Now, take $x\in X$ and denote
\begin{equation}
  \label{eq:def-D}
  D = \limsup_{R\to\infty} \frac{1}{\epsilon R}\log\abs{B_X(x,R)\cap \Gamma x},
\end{equation}
and $\omega = e^{D\epsilon}$. We then have the following.
\begin{theorem}[{\cite[Theorem 2.3]{Blachere2011}}]
  \label{thm:bhm}
  Suppose that $X$ is a proper roughly geodesic hyperbolic space, and
  $\Gamma\subseteq\Isom(X)$ is a non-elementary hyperbolic group,
  acting properly and cocompactly. Then the Hausdorff dimension of
  $(\bd X, d_\epsilon)$ is equal to $D$, defined in \eqref{eq:def-D},
  and the corresponding Hausdorff measure $\mu$ is
  $\Gamma$-quasi-conformal of dimension $D$ and Ahlfors regular of
  dimension $D$. Furthermore, any $\Gamma$-quasi-conformal measure
  $\mu'$ on $\bd X$ is equivalent to $\mu$ with Radon-Nikodym
  derivative $d\mu'/ d\mu \asymp 1$ a.e., and $ \abs{B_X(x,R)\cap
    \Gamma x} \asymp \omega^R$.
\end{theorem}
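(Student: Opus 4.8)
The statement to prove is Theorem~\ref{thm:bhm}, which is cited from Blachère–Haïssinsky–Mathieu. Let me sketch a proof plan.

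\medskip

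The plan is to establish the equality of the Hausdorff dimension with $D$, the quasi-conformality and Ahlfors regularity of the Hausdorff measure, and the uniqueness statement, all resting on good estimates relating Gromov products to distances via rough geodesics. The crucial geometric input is the \emph{shadow lemma}: for a rough geodesic ray $\gamma$ from $o$ with $\gamma(t)=x$, the shadow $\mathcal{O}_x = \{\xi\in\bd X : (x,\xi) \approx |x|\}$ — equivalently, the set of boundary points ``behind'' $x$ as seen from $o$ — has diameter $d_\epsilon$-comparable to $e^{-\epsilon|x|}$. This follows from the visual metric estimate $d_\epsilon(\xi,\eta)\asymp e^{-\epsilon(\xi,\eta)}$ together with the hyperbolic inequality~\eqref{eq:hyperbolicity-inequality}: two points both visible behind $x$ have Gromov product $\agtr|x|$, while conversely any $\xi$ with $(x,\xi)$ close to $|x|$ lies in a ball of the right radius.

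\medskip

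First I would fix a visual metric $d_\epsilon$ on $\bd X$ and, for each $x\in\Gamma o$, consider the shadow $\mathcal{O}_x$ as above. Using properness and the rough-geodesic structure, I would show that for any $\xi\in\bd X$ and $r=e^{-\epsilon R}$, the ball $B_{\bd X}(\xi,r)$ is ``squeezed'' between two shadows $\mathcal{O}_x$ with $|x|\approx R$: one contained in the ball and one containing it, again up to uniform additive error in $R$. Then the counting function $N(R) := |B_X(x_0,R)\cap\Gamma o|$ governs the covering/packing numbers of $\bd X$ at scale $e^{-\epsilon R}$: a maximal $e^{-\epsilon R}$-separated subset of $\bd X$ has cardinality $\asymp N(R)$, because distinct well-separated shadows correspond to group elements at comparable distance that are themselves coarsely separated, and conversely the group elements on a sphere of radius $R$ project to an $e^{-\epsilon R}$-net of shadows by surjectivity of the endpoint map (every $\xi\in\bd X$ is hit by a rough geodesic ray). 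The subadditivity-type estimate $N(R+S)\aless N(R)N(S)$, coming from concatenating rough geodesics, then forces $N(R)\asymp\omega^R$ with $\omega=e^{\epsilon D}$ by a standard Fekete argument — this is also where the $\limsup$ in the definition of $D$ upgrades to a genuine limit.

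\medskip

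With the two-sided bound on covering numbers, the Hausdorff dimension computation is routine: an $e^{-\epsilon R}$-net of $\asymp\omega^R$ shadows each of diameter $\asymp e^{-\epsilon R}$ gives $\sum_i(\diam U_i)^\alpha\asymp\omega^R e^{-\epsilon\alpha R} = e^{\epsilon R(D-\alpha)}$, which is bounded iff $\alpha\geq D$, and the packing side prevents it from vanishing for $\alpha<D$; hence $\dim_H\bd X = D$ and $\mathcal{H}_D(B_{\bd X}(\xi,r))\asymp r^D$, i.e.\ Ahlfors regularity of the Hausdorff measure $\mu$. For quasi-conformality, I would compute $g_*\mu(B_{\bd X}(\xi,r))$ for $g\in\Gamma$: pushing a ball forward by an isometry $g$ of $X$ moves the basepoint, and the cocycle $2(go,\xi)-d(o,go)$ is exactly the Gromov-product correction in~\eqref{eq:Gromov-product-basepoint-change} that measures how the visual metric distorts under the change of basepoint $o\mapsto g^{-1}o$; combining this with Ahlfors regularity yields $\frac{dg_*\mu}{d\mu}(\xi)\asymp e^{\epsilon D(2(go,\xi)-d(o,go))}$. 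Finally, for uniqueness: given another $\Gamma$-quasi-conformal measure $\mu'$ of dimension $D$, the quasi-conformality estimate applied along a sequence $g_n$ whose action shrinks a neighborhood of a fixed boundary point $\eta$ to that point shows that the Radon–Nikodym derivative of $\mu'$ against $\mu$ is $\asymp 1$ at $\mu$-a.e.\ point, using the density of attracting fixed points in $\bd X$ and a Lebesgue-differentiation argument adapted to the metric space $(\bd X,d_\epsilon)$ (valid since Ahlfors-regular spaces are doubling).

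\medskip

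The main obstacle is the shadow lemma in the purely roughly-geodesic, non-proper-geodesic setting: in a genuine geodesic space one leans on the Morse lemma and nearest-point projections, but here one must instead work directly with the quasi-ruler condition~\eqref{eq:def-quasi-ruler} and the iterated hyperbolic inequality to control how rough geodesics from $o$ through $x$ fellow-travel, and to show the endpoint map from rough geodesic rays to $\bd X$ behaves well enough that shadows genuinely cover the boundary. The bookkeeping of additive constants — ensuring they stay uniform in $R$ and in $g$ — is the technically delicate part, but it is exactly what the machinery of~\cite{Blachere2011,Bonk2000} recalled in Section~\ref{sec:geometric-setting} is designed to handle, so I would cite those lemmas freely rather than reprove them.
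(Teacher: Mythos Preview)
The paper does not prove this theorem at all: it is quoted verbatim from \cite[Theorem 2.3]{Blachere2011} and used as a black box, so there is no in-paper argument to compare your sketch against. Your outline is a reasonable summary of the circle of ideas behind the Coornaert/Blach\`ere--Ha\"issinsky--Mathieu theory, and since you yourself say you would ``cite those lemmas freely rather than reprove them,'' you are essentially doing exactly what the paper does.

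Two small remarks on the sketch itself. First, a notational slip: the growth estimate you need is multiplicative, $N(R+S)\lasymp N(R)N(S)$, not additive ($\aless$ in the paper's conventions). Second, and more substantively, the order of deduction you propose is somewhat inverted relative to the standard proof. In Coornaert's original argument and its extension in \cite{Blachere2011}, one first \emph{constructs} a quasi-conformal measure via the Patterson summation trick (a weak limit of normalized sums of Dirac masses on orbit points), then proves the Sullivan shadow lemma for that measure, and only then deduces both Ahlfors regularity and the two-sided growth bound $|B_X(x,R)\cap\Gamma x|\asymp\omega^R$ as consequences. Your plan tries to extract the two-sided growth bound directly from a Fekete-type submultiplicativity argument before any measure is constructed; Fekete gives you existence of the limit and an upper bound $N(R)\lasymp\omega^R$, but the matching lower bound (ruling out subexponential correction factors) genuinely requires the measure-theoretic input of the shadow lemma, so your sketch has a gap at exactly that point. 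This is not fatal---reordering the argument to construct the measure first fixes it---but it is worth being aware that the growth estimate is an output of the Patterson--Sullivan machinery, not an input.
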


In particular, this theorem implies that the quasi-conformal measures
associated to different choices of $\epsilon$ are equivalent, so the
above considerations lead to a unique measure class on $\bd X$ (in
fact a class of the finer relation of equivalence with Radon-Nikodym
derivatives bounded away from $0$ and $\infty$), depending only on the
metric $d$, called the \emph{Patterson-Sullivan class}. Also, by
Ahlfors regularity, the boundary has no isolated points.

We say that a measure class preserving action of a group $G$ on a
measure space $(X,\nu)$ is \emph{doubly ergodic}, if the induced
diagonal action of $G$ on $(X^2,\nu^2)$ is ergodic. In the
classification of the boundary representations, double ergodicity of
Patterson-Sullivan measures will be crucial. This result is known to
experts, but apparently the proof has never been written down. It was
communicated to us by Uri Bader that the full proof of a stronger
property called \emph{double isometric ergodicity} will appear in a
forthcoming joint paper with Alex Furman
\cite{Bader2014}. We include the proof of double ergodicity, based on the ideas explained to us by Bader and Furman, in Appendix~\ref{sec:double-ergod-patt-1}.

\subsection{Boundary representations}
\label{sec:bound-repr}

We will now fix some notation for the rest of the paper. Let $\Gamma$ be a
non-elementary hyperbolic group. Fix a metric
$d\in\mathcal{D}(\Gamma)$, and choose $1\in\Gamma$ as the
base-point. Since $\Gamma$ acts on itself by isometries freely and
cocompactly, we are in the setting of
Section~\ref{sec:quasi-conf-meas}. Pick a sufficiently small
$\epsilon>0$, and let $D$ be the Hausdorff dimension of
$(\bG,d_\epsilon)$. Denote by $\mu$ the corresponding Hausdorff
measure. We may normalize $\mu$ and $d_\epsilon$ in such a way that
$\mu(\bG)=1$ and $\diam\bG=1$. Since $D\epsilon$ is constant, by choosing sufficiently small $\epsilon$, we may also
assume that $D>1$. Now, denote
\begin{equation}
  \label{eq:Pg-def-estimate}
  P_g(\xi)=\frac{dg_*\mu}{d\mu}(\xi) \asymp \omega^{2(g,\xi)-\abs{g}}.
\end{equation}
The \emph{boundary representation} $\pi$ of $\Gamma$ associated to $\mu$
is the unitary representation of $\Gamma$ on the Hilbert space
$L^2(\bG,\mu)$ given by
\begin{equation}
\label{eq:def-boundary-rep}
  [\pi(g)\phi](\xi) = P_g^{1/2}(\xi)\phi(g^{-1}\xi)
\end{equation}
for $\phi\in L^2(\bG,\mu)$ and $g\in\Gamma$. If we take a measure
$\nu$ equivalent to $\mu$, then the unitary isomorphism $T_{\mu\nu}\colon L^2(\bG,\mu)\to
L^2(\bG,\nu)$ defined by
\begin{equation}
  T_{\mu\nu}\phi = \left(\frac{d\mu}{d\nu}\right)^{1/2}\phi
\end{equation}
intertwines the corresponding boundary representations. We therefore
obtain a unique (up to unitary equivalence) representation of $\Gamma$
associated to the class of $\Gamma$-quasi-conformal measures on $\bG$
with respect to $d$.

\section{Shadows and cones}
\label{sec:shadows-cones}

In this section we will work with $\Gamma$ in order to estimate the
cardinalities of some of its subsets. First, we introduce the
classical notion of the shadow cast by an element of the group onto
its boundary. Then, for a ball $B$ in the boundary, we define the cone
over $B$ as the set of all elements $\Gamma$ whose shadows intersect
$B$. It turns out that the growth of such a cone behaves as one could
expect, i.e.\ the cardinality of its intersection with a large ball in
$\Gamma$ is approximately $\mu(B)$ times the cardinality of the ball.

We then move on to define double shadows in $\bG^2$. A double
shadow of $g$ is the product of suitable shadows of $g$ and $g^{-1}$. We
show that they form a nice cover of $\bG^2$, just as
in the case of ordinary shadows in $\bG$.

\subsection{Shadows}
\label{sec:shadows}

We begin by observing that for any element $g$ of $\Gamma$ there
exists a roughly geodesic ray emanating from $1$ and passing within a uniform
distance from $g$.  In terms of the Gromov product this can be stated
as follows.
\begin{lemma}
  \label{lem:near-geodesic-ray}
    The estimate 
    \begin{equation}
      \sup_{\xi\in\bd\Gamma}(g,\xi) \approx \abs{g}
    \end{equation}
    holds uniformly for $g\in\Gamma$.
\end{lemma}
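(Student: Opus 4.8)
The plan is to prove the two inequalities $\sup_{\xi}(g,\xi) \aless \abs{g}$ and $\sup_{\xi}(g,\xi) \agtr \abs{g}$ separately, the first being essentially formal and the second requiring a genuine geometric input.

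For the upper bound, observe that for any $\xi \in \bG$ and any sequence $(x_i) \subset \Gamma$ representing $\xi$, the defining formula $(g, x_i) = \frac{1}{2}(\abs{g} + \abs{x_i} - d(g, x_i))$ together with the triangle inequality $d(g, x_i) \geq \abs{x_i} - \abs{g}$ gives $(g, x_i) \leq \abs{g}$. Passing to the $\liminf$ and using the estimate~\eqref{eq:extended-gromov-product-representative-estimate} that $\liminf_{i,j}(x_i, y_j) \approx (x, y)$ for representatives, we get $(g, \xi) \aless \abs{g}$ uniformly, hence $\sup_{\xi}(g, \xi) \aless \abs{g}$.

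For the lower bound, the idea is exactly the one announced in the text preceding the lemma: every group element lies within a uniform distance of a roughly geodesic ray emanating from the basepoint. Concretely, fix $g \in \Gamma$. Since $\Gamma$ with the metric $d$ is roughly geodesic (being a quasi-ruled hyperbolic space, as noted in Section~\ref{sec:roughly-geod-hyperb}), there is a $K$-rough geodesic segment $\gamma \colon [0, T] \to \Gamma$ from $1$ to $g$, with $T \approx \abs{g}$. Extend $\gamma$ to a $K$-rough geodesic ray (with the universal constant), which is possible since $\Gamma$ is roughly geodesic and, for the extension past $g$, one can concatenate with a rough geodesic ray from $g$; the concatenation of two rough geodesics meeting at a point is again a rough geodesic in a hyperbolic space, with controlled constant. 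Let $\xi = \gamma(\infty) \in \bG$ be its endpoint. Because $\gamma$ is a $K$-quasi-ruler, for the basepoint $1 = \gamma(0)$ and the point $g = \gamma(T)$ we have $(\gamma(0), \gamma(\infty))_{\gamma(T)} \leq \tau$ for a uniform $\tau$ — this is the quasi-ruler inequality~\eqref{eq:def-quasi-ruler} pushed to the endpoint, valid up to a uniform additive error by~\eqref{eq:extended-gromov-product-representative-estimate} since $\gamma(t) \to \xi$. Equivalently, $(1, \xi)_g \aless 1$, and unwinding the basepoint-change relation between $(1, \xi)_g$ and $(g, \xi) = (g, \xi)_1$ — namely $(g,\xi)_1 + (1,\xi)_g = \abs{g}$ up to a uniform additive error for roughly geodesic rays — yields $(g, \xi) \agtr \abs{g}$. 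Hence $\sup_{\xi}(g, \xi) \agtr \abs{g}$.

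The main obstacle is the geometric bookkeeping in the lower bound: one must make sure that extending a rough geodesic segment through $g$ to a full ray keeps the constants uniform, and that the ``near a ray'' statement really translates into $(1,\xi)_g$ being uniformly bounded rather than merely $(1, g)_{\gamma(t)}$ being bounded for finite $t$ — this is where~\eqref{eq:extended-gromov-product-representative-estimate} and the results of~\cite{Bonk2000} on endpoints of rough geodesic rays do the work. Everything else is the standard manipulation of Gromov products and triangle inequalities, uniform in $g$ by construction.
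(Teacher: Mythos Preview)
Your upper bound is fine and matches the paper's one-line argument. The lower bound, however, contains a genuine gap: the claim that ``the concatenation of two rough geodesics meeting at a point is again a rough geodesic in a hyperbolic space, with controlled constant'' is simply false. The obvious counterexample is backtracking---take a segment from $1$ to $g$ and then a ray from $g$ whose endpoint $\eta$ satisfies $(1,\eta)_g$ large (e.g.\ a ray heading back toward $1$ in a tree). What \emph{is} true is that the concatenation is a rough geodesic provided $(1,\eta)_g$ is uniformly small; but since $(1,\eta)_g \approx \abs{g}-(g,\eta)$, asking for this is exactly asking for $(g,\eta)\agtr\abs{g}$, which is the statement you are trying to prove. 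So as written the argument is circular. The extension of a rough segment to a rough ray \emph{can} be carried out (via an Arzel\`a--Ascoli argument in the proper setting, or the machinery of \cite{Bonk2000}), but it needs an honest proof, not the concatenation claim.

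The paper takes a completely different and more elementary route for the lower bound: it exploits the group identity $(g,h)+(g^{-1},g^{-1}h)=\abs{g}$, extended to the boundary as $(g^{-1},g^{-1}\xi)\approx\abs{g}-(g,\xi)$. Fixing any two distinct points $\xi_1,\xi_2\in\bG$ (which exist because $\Gamma$ is non-elementary), hyperbolicity gives $\min_i(g,\xi_i)\aless(\xi_1,\xi_2)$, a fixed finite number; hence $\max_i(g^{-1},g^{-1}\xi_i)\agtr\abs{g}-(\xi_1,\xi_2)$, which yields the lower bound for $g^{-1}$. This avoids any ray-extension issues entirely.
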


\begin{proof}
  We get the upper estimate $\sup(g,\xi) \leq \lvert{g}\rvert$ from
  the triangle inequality. 

  The Gromov product on $\Gamma$ satisfies the identity $(g,h) +
  (g^{-1},g^{-1}h) = \lvert{g}\rvert$, which, after extension to
  $\overline{\Gamma}$, takes the form
  \begin{equation}
    \label{eq:gromov-product-inverse}
     (g^{-1}, g^{-1}\xi) \approx \abs{g} - (g,\xi).
  \end{equation}
  If we fix two distinct points $\xi_1,\xi_2\in\bG$, then 
  \begin{equation}
    \max_i (g^{-1},g^{-1}\xi_i) \approx \abs{g} - \min_i (g,\xi_i)
    \agtr \abs{g}-(\xi_1,\xi_2),
  \end{equation}
  which gives the estimate from below for $g^{-1}$.
\end{proof}

Using Lemma~\ref{lem:near-geodesic-ray}, for every $g\in\Gamma$ we may
fix $\hat{g}\in\bd\Gamma$ such that $(g,\hat{g}) \approx
\lvert{g}\rvert$.  We will also denote by $\check{g}$ the point in the
boundary corresponding to $g^{-1}$. The point $\hat{g}$ plays the same
role as the endpoint of the geodesic ray starting at the base-point and
passing through $g$ in a CAT(0) space. In particular, we have
\begin{equation}
  \label{eq:hat-g-xi}
  (\xi,g) \approx \min\{\abs{g},(\hat{g},\xi)\}
\end{equation}
for all $\xi\in\Gamma$. This estimate will be usually used in the form
\begin{equation}
  \label{eq:omega-hat-g-xi}
  \omega^{(\xi,g)} \asymp \min\{\omega^{\abs{g}}, d_\epsilon(\hat{g},\xi)^{-D}\}.
\end{equation}

By Theorem~\ref{thm:bhm}, the growth of
$\Gamma$ satisfies  $\abs{B_\Gamma(1,R)}\asymp
\omega^R$. Fix $r>0$ and for $R > 0$ define the annulus
\begin{equation}
  A_{R} = \{ g\in\Gamma : R-r \leq \abs{g} \leq R+r\},
\end{equation}
We will assume that $r$ is sufficiently large for the following three
conditions to hold:
\begin{enumerate}[(1)]
\item the lower bound for the cardinality of the ball $B_\Gamma(1,R+r)$
  exceeds the upper bound for the cardinality of $B_\Gamma(1,R-r)$, so
  that the annuli $A_{R}$ satisfy the growth estimate
  $\abs{A_{R}}\asymp \omega^R$,
\item for any roughly geodesic\footnote{recall, that by this we mean a \(K\)-roughly geodesic ray with \(K\) fixed in Section~\ref{sec:roughly-geod-hyperb}} ray $\gamma$ from $1$ to $\xi\in\bG$ we
  have $\gamma(R)\in A_R$,
\item $r$ satisfies the bound obtained\footnote{To precisely
    formulate condition (3) we need
    Lemma~\ref{lem:preventing-cancellation}, which asserts that there
    exists some universal constant $\tau$ related to cancellations of
    elements in the group. Until
    Proposition~\ref{prop:double-shadows-cover}, only conditions (1)
    and (2) will be used, so the reader need not be afraid of a
    circular definition. We could just postpone condition (3) and require it in the proof of Proposition~\ref{prop:double-shadows-cover}, as we can always take larger \(r\).} in the proof of
  Proposition~\ref{prop:double-shadows-cover}, ensuring that the elements of $\Gamma$ of
  length approximately $R$ constructed therein are in $A_R$.
\end{enumerate}

For $\sigma>0$ define the
\emph{shadow} $\Sigma(g,\sigma)$ of $g$ as the closed ball
\begin{equation}
  \Sigma(g,\sigma) = B_{\bd\Gamma}(\hat{g}, e^{-\epsilon(\abs{g}-\sigma)}).
\end{equation} 
Usually one defines the shadow of \(g\) in a slightly different way, by taking the set of endpoints of geodesic rays passing through a ball of fixed radius centered at \(g\). These shadows can be uniformly sandwiched between our shadows, and we find defining shadows to be balls in the boundary to be more suitable for our applications.

The following fundamental property of shadows is classical, and since we use a non-classical definition of a shadow, we
include its very short proof. This standard lemma has also a second
part, saying that the multiplicity of the cover of $\bG$ by shadows
is uniformly bounded in $R$, but we will not need that statement.
\begin{lemma}
  \label{lem:number-of-shadows}
  For sufficiently large $\sigma$, the family of shadows
  $\{\Sigma(g,\sigma):g\in A_{R}\}$ is a cover of $\bd\Gamma$ for any $R\geq0$.
\end{lemma}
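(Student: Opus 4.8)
The goal is to show that if $\rho$ is large enough, then every $\xi\in\bG$ lies in $\Sigma(g,\rho)$ for some $g\in A_R$, regardless of $R\geq 0$. The natural approach is to walk out along a roughly geodesic ray toward $\xi$ and pick the point at the right radius. Concretely, fix $\xi\in\bG$ and let $\gamma$ be a roughly geodesic ray from $1$ to $\xi$, with constant the universal one from Section~\ref{sec:roughly-geod-hyperb}. Put $g=\gamma(R)$; by condition~(2) on $r$ we have $g\in A_R$. It then remains to check that $\xi\in\Sigma(g,\rho)=B_{\bG}(\hat g,e^{-\epsilon(\abs{g}-\rho)})$, i.e.\ that $d_\epsilon(\hat g,\xi)\leq e^{-\epsilon(\abs{g}-\rho)}$, which (up to the implicit constant in $d_\epsilon(\xi,\eta)\asymp_\epsilon e^{-\epsilon(\xi,\eta)}$) amounts to the estimate $(\hat g,\xi)\agtr \abs{g}-\rho$, uniformly in $R$ and $\xi$, for a suitable choice of $\rho$.

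First I would estimate $(g,\xi)$ from below. Since $\gamma$ is a rough geodesic from $1$ through $g=\gamma(R)$ to $\xi$, the defining quasi-ruler inequality \eqref{eq:def-quasi-ruler} applied along $\gamma$ gives $(\gamma(0),\xi)_{\gamma(R)}\aless\tau$ (here using the extension of the Gromov product to the boundary, together with \eqref{eq:extended-gromov-product-representative-estimate} to pass from points $\gamma(t)$, $t\to\infty$, to $\xi$). Unwinding $(\gamma(0),\xi)_{\gamma(R)}=(1,\xi)_g$ via the definition of the Gromov product and $\abs{\gamma(R)-1}\approx R$ yields $(g,\xi)\agtr\abs{g}$; combined with the trivial bound $(g,\xi)\leq\abs{g}$ this gives $(g,\xi)\approx\abs{g}$ with a universal additive constant, say $\abs{g}-(g,\xi)\leq c_1$. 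Next I would feed this into \eqref{eq:hat-g-xi}, which says $(\xi,g)\approx\min\{\abs{g},(\hat g,\xi)\}$. If the minimum were realized by $\abs{g}$, there is nothing to do; if it is realized by $(\hat g,\xi)$, then $(\hat g,\xi)\agtr(\xi,g)\agtr\abs{g}-c_1$, so in either case $(\hat g,\xi)\geq\abs{g}-c_2$ for a universal constant $c_2$ absorbing the additive slack from Lemma~\ref{lem:near-geodesic-ray} and \eqref{eq:hat-g-xi}. Taking $\rho\geq c_2+c_3$, where $c_3$ absorbs the implicit constant in $d_\epsilon\asymp_\epsilon e^{-\epsilon(\cdot,\cdot)}$, gives $d_\epsilon(\hat g,\xi)\leq e^{-\epsilon(\hat g,\xi)}\cdot(\text{const})\leq e^{-\epsilon(\abs{g}-\rho)}$, i.e.\ $\xi\in\Sigma(g,\rho)$, as desired.

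I expect the only real subtlety to be bookkeeping: making sure the additive constants entering $(g,\xi)\approx\abs{g}$, the estimate \eqref{eq:hat-g-xi}, and the comparison between $d_\epsilon$ and $e^{-\epsilon(\cdot,\cdot)}$ are genuinely uniform in $\xi$ and $R$ — in particular that the rough-geodesic constant is the universal one (guaranteed by \cite[Proposition 5.2(2)]{Bonk2000}) and does not depend on the endpoint $\xi$ — so that a single $\rho$ works simultaneously for all $R\geq 0$. One should also note the boundary case of small $R$: for $R$ bounded the diameter normalization $\diam\bG=1$ makes the shadow $\Sigma(g,\rho)$ contain all of $\bG$ once $\abs{g}-\rho\leq 0$, so the claim is trivial there, and for large $R$ the argument above applies. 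Everything else is a direct substitution into the inequalities already recorded in Sections~\ref{sec:hyperbolic-spaces-and-groups}--\ref{sec:shadows-cones}.
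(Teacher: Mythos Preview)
Your proposal is correct and follows essentially the same route as the paper: pick a roughly geodesic ray $\gamma$ from $1$ to $\xi$, set $g=\gamma(R)\in A_R$, and verify $(\hat g,\xi)\agtr\abs{g}$. The paper does this in one line via the hyperbolicity inequality $(\hat g,\xi)\agtr\min\{(\hat g,g),(g,\xi)\}\approx R$, whereas you arrive at the same estimate through \eqref{eq:hat-g-xi} after first establishing $(g,\xi)\approx\abs{g}$; this is just a slightly more verbose unpacking of the same computation.
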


\begin{proof}
  For $\xi\in\bd\Gamma$ take a roughly geodesic ray $\gamma$ from $1$
  to $\xi$. Then
  $g=\gamma(R)\in A_{R}$ and we have $(\hat{g},\xi)\agtr\min\{(\hat{g},g),(g,\xi)\}\approx
  R$, so for sufficiently large $\sigma$ we get $\xi\in\Sigma(g,\sigma)$.\qedhere
\end{proof}

We may now define $\Sigma(g)=\Sigma(g,\sigma)$ with $\sigma$ sufficiently
large to satisfy the conclusion of
Lemma~\ref{lem:number-of-shadows}. 

\subsection{Cones over balls in the boundary}
\label{sec:cones-over-balls}

For $\xi\in\bG$ and $\theta>0$ define
the \emph{cone over $B_{\bG}(\xi,e^{-\epsilon\theta})= B_{\bG}(\xi,\omega^{-\theta/D})$} as
\begin{equation}
  C(\xi,\theta) = \{g\in\Gamma : \Sigma(g)\cap
  B_{\bG}(\xi,e^{-\epsilon\theta}) \ne\emptyset\},
\end{equation}
and denote
\begin{equation}
  C_{R}(\xi,\theta) = A_{R}\cap C(\xi,\theta).
\end{equation}

\begin{lemma}
  \label{lem:growth-of-cones}
  The growth of the cone $C(\xi,\theta)$ satisfies the estimates
  \begin{equation}
    \omega^{R-\theta} \lasymp \abs{C_{R}(\xi,\theta)} \lasymp \omega^R
  \end{equation}
  uniformly in $R,\theta,\xi$. When $R\geq\theta$, the tighter estimate
  \begin{equation}
    \abs{C_{R}(\xi,\theta)} \asymp \omega^{R-\theta}
  \end{equation}
  holds.
\end{lemma}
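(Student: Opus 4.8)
The plan is to prove the two one-sided bounds in general, and then upgrade the upper bound to $\omega^{R-\rho}$ under the assumption $R\geq\rho$. The estimate $\abs{C_R(\xi,\rho)}\lasymp\omega^R$ is immediate, since $C_R(\xi,\rho)\subseteq A_R$ and, by condition~(1) imposed on $r$ together with Theorem~\ref{thm:bhm}, $\abs{A_R}\asymp\omega^R$.

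For the general lower bound I would argue with measures rather than multiplicities. By Lemma~\ref{lem:number-of-shadows} the shadows $\{\Sigma(g):g\in A_R\}$ cover $\bG$, so in particular they cover the ball $B_{\bG}(\xi,e^{-\epsilon\rho})$; every $g\in A_R$ whose shadow meets this ball lies in $C_R(\xi,\rho)$, whence
\begin{equation*}
  B_{\bG}(\xi,e^{-\epsilon\rho})\subseteq\bigcup_{g\in C_R(\xi,\rho)}\Sigma(g).
\end{equation*}
Applying $\mu$ and using subadditivity together with Ahlfors regularity from Theorem~\ref{thm:bhm}, one has $\mu(B_{\bG}(\xi,e^{-\epsilon\rho}))\asymp e^{-\epsilon D\rho}=\omega^{-\rho}$, while for $g\in A_R$ the shadow $\Sigma(g)$ is a ball of radius $\asymp e^{-\epsilon R}$ (the fixed parameter in $\Sigma(g)=\Sigma(g,\rho)$ being absorbed into the constants), so $\mu(\Sigma(g))\asymp\omega^{-R}$. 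Hence $\omega^{-\rho}\lasymp\abs{C_R(\xi,\rho)}\,\omega^{-R}$, i.e.\ $\omega^{R-\rho}\lasymp\abs{C_R(\xi,\rho)}$, with no control on the multiplicity of the cover required.

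For the refined upper bound, assume $R\geq\rho$ and take $g\in C_R(\xi,\rho)$. Since $\Sigma(g)$ has radius $\asymp e^{-\epsilon R}\lasymp e^{-\epsilon\rho}$ and meets $B_{\bG}(\xi,e^{-\epsilon\rho})$, the triangle inequality gives $d_\epsilon(\hat g,\xi)\lasymp e^{-\epsilon\rho}$, that is $(\hat g,\xi)\agtr\rho$; then \eqref{eq:hat-g-xi} yields $(g,\xi)\approx\min\{\abs g,(\hat g,\xi)\}\agtr\rho$ (using $\abs g\approx R\geq\rho$). Now fix a roughly geodesic ray $\gamma$ from $1$ to $\xi$ and set $g_0=\gamma(\rho)$; by condition~(2) on $r$ we have $g_0\in A_\rho$, and the standard fact $(\gamma(\rho),\xi)\approx\rho$ (which follows from \eqref{eq:extended-gromov-product-representative-estimate} and the quasi-ruler property) gives $(g_0,\xi)\approx\rho$. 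Hyperbolicity then forces $(g,g_0)_1\agtr\min\{(g,\xi),(\xi,g_0)\}\agtr\rho$, while $(g,g_0)_1\leq\abs{g_0}\approx\rho$ always, so $(g,g_0)_1\approx\rho$ and therefore
\begin{equation*}
  d(g,g_0)=\abs g+\abs{g_0}-2(g,g_0)_1\approx R+\rho-2\rho=R-\rho .
\end{equation*}
Thus $C_R(\xi,\rho)\subseteq B_\Gamma(g_0,R-\rho+O(1))$, and by left-invariance of $d$ and the growth estimate of Theorem~\ref{thm:bhm}, $\abs{B_\Gamma(g_0,R-\rho+O(1))}=\abs{B_\Gamma(1,R-\rho+O(1))}\asymp\omega^{R-\rho}$ (the additive constant being harmless since $R-\rho\geq 0$). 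Together with the lower bound this gives $\abs{C_R(\xi,\rho)}\asymp\omega^{R-\rho}$.

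The main obstacle is the last step: converting the combinatorial cone condition $\Sigma(g)\cap B_{\bG}(\xi,e^{-\epsilon\rho})\neq\emptyset$ into the geometric statement $d(g,g_0)\approx R-\rho$, and keeping careful track of all the additive constants — in the passage from metric balls to Gromov products via the visual metric, in \eqref{eq:hat-g-xi}, in the fixed shadow parameter, and in the growth bounds — so that they are absorbed and do not accumulate with $R$ or $\rho$. Everything else is routine bookkeeping with the estimates recorded in this section and in Theorem~\ref{thm:bhm}.
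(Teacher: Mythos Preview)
Your proof is correct and follows essentially the same route as the paper's: the trivial upper bound from $\abs{A_R}$, the lower bound by covering $B_{\bG}(\xi,e^{-\epsilon\rho})$ with shadows and comparing measures, and for $R\ge\rho$ the containment $C_R(\xi,\rho)\subseteq B_\Gamma(\gamma(\rho),R-\rho+C)$ obtained by pushing the cone condition through hyperbolicity. The only cosmetic difference is that where you use the triangle inequality in $d_\epsilon$ together with~\eqref{eq:hat-g-xi} to get $(g,\xi)\agtr\rho$, the paper picks an intermediate point $\eta\in\Sigma(g)\cap B_{\bG}(\xi,e^{-\epsilon\rho})$ and runs the chain $(g,\gamma(\rho))\agtr\min\{(g,\hat g),(\hat g,\eta),(\eta,\xi),(\xi,\gamma(\rho))\}$ directly.
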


\begin{proof}
  The upper bound $\abs{C_{R}(\xi,\theta)}\lasymp\omega^R$ follows
  from the estimate on $\abs{A_{R}}$. For the lower bound, observe
  that by Lemma~\ref{lem:number-of-shadows} the shadows $\Sigma(g)$
  of $g\in C_{R}(\xi,\theta)$ cover the ball
  $B(\xi,e^{-\epsilon\theta})$. Hence,
  \begin{equation}
    \omega^{-\theta}\asymp\mu(B_{\bG}(\xi,e^{-\epsilon\theta})) \leq
    \sum_{g\in C_{R}(\xi,\theta)}\mu(\Sigma(g))\asymp\abs{C_{R}(\xi,\theta)}\omega^{-R},
  \end{equation}
  so $\abs{C_R(\xi,\theta)} \rasymp \omega^{R-\theta}$. 
  
  Now, assume that $R\geq\theta$. Let $\gamma$ be a roughly geodesic ray
  from $1$ to $\xi$.  If $g\in
  C_{R}(\xi,\theta)$, we may pick some $\eta\in\Sigma(g)\cap
  B_{\bG}(\xi,e^{-\epsilon\theta})$. We then have
  \begin{equation}
    (g,\gamma(\theta)) \agtr \min\{ (g,\hat{g}),
    (\hat{g},\eta),(\eta,\xi),(\xi,\gamma(\theta))\} \agtr  \theta,
  \end{equation}
  and in consequence
  \begin{equation}
    d(g,\gamma(\theta)) \aless R-\theta.
  \end{equation}
  Therefore, $C_{R}(\xi,\theta)\subseteq B_\Gamma(\gamma(\theta),R-\theta+C)$ for some constant
  $C$, and the last estimate follows from the bound on the growth of $\Gamma$.
\end{proof}

\subsection{Shadows in the square of the boundary}
\label{sec:shad-square}

The next lemma will allow us to understand the distribution of the
points $(\hat{g},\check{g})$ in $\bG^2$. It generalizes the
observation that if we take two elements $g,h$ of a non-abelian free
group expressed in the standard generators, then after possibly changing
the last letter of $g$, there is no cancellation in the product
$gh$. We thought that such a natural result should be well-known, but
to our surprise we did not find it in any of the standard references
for hyperbolic groups. Therefore we present it together with its full
proof.

\begin{lemma}
  \label{lem:preventing-cancellation}
  Let $\Gamma$ be a non-elementary hyperbolic group, endowed with a
  metric $d\in\mathcal{D}(\Gamma)$. There exists $\tau > 0$ such that
  for any $g_0,h\in\Gamma$ one can find $g\in B_\Gamma(g_0,\tau)$ such
  that $\abs{gh}\geq \abs{g}+\abs{h}-2\tau$.
\end{lemma}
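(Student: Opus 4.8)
The plan is to translate the ``changing the last letter'' heuristic from free groups into the language of Gromov products, using the extension of the Gromov product to the boundary together with Lemma~\ref{lem:near-geodesic-ray}. Observe first that the desired inequality $\abs{gh}\geq\abs{g}+\abs{h}-2\tau$ is, up to a factor of $2$, exactly the statement that the Gromov product $(g^{-1},h)_1$ is bounded by $\tau$: indeed $(g^{-1},h) = \frac12(\abs{g}+\abs{h}-\abs{gh})$ since $d(g^{-1},h)=\abs{gh}$ by left-invariance. So the lemma is equivalent to: one can perturb $g_0$ by a bounded amount to a $g$ with $(g^{-1},h)\approx 0$, i.e.\ so that $g^{-1}$ and $h$ ``point in different directions'' as seen from the basepoint.

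The key step is to choose the perturbation using a fixed pair of distinct boundary points. Fix once and for all two distinct points $\xi_1,\xi_2\in\bG$, which exist since $\Gamma$ is non-elementary (so $\bG$ is infinite). Given $h$, at most one of $\xi_1,\xi_2$ can have large Gromov product with $h$: by hyperbolicity $(\xi_1,\xi_2)\agtr\min\{(\xi_1,h),(\xi_2,h)\}$, so since $(\xi_1,\xi_2)$ is a fixed finite number, one of $(\xi_i,h)$ is bounded by a uniform constant; say $(\xi_1,h)\aless 0$. Now I want to choose $g\in B_\Gamma(g_0,\tau)$ with $\check g = \widehat{g^{-1}}$ close to $\xi_1$, more precisely with $(g^{-1},\xi_1)\approx\abs{g}$. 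To do this, take a roughly geodesic ray $\gamma$ from $1$ to $\xi_1$ and another roughly geodesic segment from $1$ to $g_0^{-1}$; by hyperbolicity these two roughly geodesics fellow-travel for a length roughly $(g_0^{-1},\xi_1)$ and then diverge. Choosing $g^{-1}$ to be an appropriate point on $\gamma$ at distance exactly $\abs{g_0}$ from $1$ (adjusting by a bounded amount so that it lies in the right annulus), one gets $d(g^{-1},g_0^{-1})\aless$ const, hence $d(g,g_0)\aless\tau$ for a uniform $\tau$, while simultaneously $(g^{-1},\xi_1)\approx\abs{g^{-1}}=\abs{g}$ by construction and Lemma~\ref{lem:near-geodesic-ray}. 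Then from $(g^{-1},\xi_1)\approx\abs g$ and the iterated hyperbolicity inequality,
\begin{equation}
  \abs{g}\approx(g^{-1},\xi_1)\agtr\min\{(g^{-1},h),(h,\xi_1)\},
\end{equation}
and since $(h,\xi_1)\aless 0$ is bounded, for $\abs g$ large this forces $\min$ to be realized by $(g^{-1},h)$ up to bounded error — which is the wrong direction. So instead I should run the estimate the other way: I claim $(g^{-1},h)\aless\max\{(\xi_1,h),\,\abs g - (g^{-1},\xi_1)\}$, which follows from $(g^{-1},h)\agtr\min\{(g^{-1},\xi_1),(\xi_1,h)\}$ only after rearranging; the clean route is to use the identity-type estimate $(g^{-1},h)+(g^{-1},\xi_1)\aless\abs{g}+(\xi_1,h)+\text{const}$ that one obtains by combining two instances of \eqref{eq:hyperbolicity-inequality}. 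Granting that, $(g^{-1},h)\aless\abs g-(g^{-1},\xi_1)+(\xi_1,h)\approx(\xi_1,h)\aless 0$, uniformly, which is exactly $(g^{-1},h)\approx 0$, i.e.\ $\abs{gh}\geq\abs g+\abs h-2\tau$ after absorbing all additive constants into $\tau$.

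The main obstacle I anticipate is making the perturbation step fully rigorous in the merely roughly geodesic (non-geodesic) setting: I need that a point $g_0^{-1}$ and a boundary point $\xi_1$ can be joined to the basepoint by roughly geodesics that fellow-travel initially, and that I can select a group element $g^{-1}$ on (or near) the $\xi_1$-ray at the prescribed distance, landing in the annulus $A_{\abs{g_0}}$, with $d(g^{-1},g_0^{-1})$ bounded by a constant depending only on the hyperbolicity and rough-geodesic constants of $d$ — not on $g_0$ or $h$. This is where the apparatus of Section~\ref{sec:roughly-geod-hyperb} (existence of roughly geodesic rays to every boundary point with universal constant, and stability of roughly geodesics) is essential, and also why the hypothesis $d\in\mathcal D(\Gamma)$, rather than just any hyperbolic metric, is used. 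Once the bounded perturbation with $(g^{-1},\xi_1)\approx\abs g$ is in hand, the rest is a short manipulation of the hyperbolic inequality with the fixed pair $\xi_1,\xi_2$, exactly as in the proof of Lemma~\ref{lem:near-geodesic-ray}.
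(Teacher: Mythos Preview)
Your approach is fundamentally different from the paper's, and the idea of using a fixed pair $\xi_1,\xi_2\in\bG$ to ``separate'' $h$ is pleasant, but the perturbation step contains a genuine gap that cannot be repaired without changing the strategy.

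You want $g\in B_\Gamma(g_0,\tau)$ with $(g^{-1},\xi_1)\approx\abs{g}$, i.e.\ with $g^{-1}$ lying approximately on the roughly geodesic ray $\gamma$ from $1$ to $\xi_1$. Your construction takes $g^{-1}=\gamma(\abs{g_0})$; but you yourself observe that $\gamma$ and the segment to $g_0^{-1}$ fellow-travel only up to time $\approx(g_0^{-1},\xi_1)$ and then diverge. Hence $d(\gamma(\abs{g_0}),g_0^{-1})\approx 2\bigl(\abs{g_0}-(g_0^{-1},\xi_1)\bigr)$, which is unbounded in $g_0$ since $\xi_1$ is fixed in advance. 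Concretely, in a free group with $\xi_1=a^\infty$ and $g_0^{-1}$ a word of length $n$ starting with $b$, the point $\gamma(n)=a^n$ is at distance $2n$ from $g_0^{-1}$. So there is no uniform $\tau$ making the construction work. (A secondary issue: even if $d(g^{-1},g_0^{-1})$ were bounded, deducing $d(g,g_0)\aless\tau$ requires inversion to be a rough isometry of $d$, whereas for general $d\in\mathcal{D}(\Gamma)$ it is only a quasi-isometry.)

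The paper circumvents this by a counting argument rather than an explicit construction: assuming that \emph{every} $g\in B_\Gamma(g_0,\tau)$ satisfies $(1,gh)_g>\tau$, one shows that the injective map $g\mapsto g\gamma_h(\tau)$ sends the whole ball $B_\Gamma(g_0,\tau)$ into a fixed-thickness tube around a length-$2\tau$ piece of $\gamma_{g_0}$. For large $\tau$ this contradicts exponential growth of balls against linear growth of such tubes; non-elementarity enters precisely through this growth comparison, not through the existence of two boundary points.
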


\begin{proof} 
  For every $g\in\Gamma$ fix a roughly geodesic segment
  $\gamma_g\colon[0,\abs{g}]\to \Gamma$ joining $1$ to $g$, and its
  reverse $\overline{\gamma}_g(t)=\gamma_g(\abs{g}-t)$.  Now, take any
  $\tau>0$ and suppose that for all $g\in B_\Gamma(g_0,\tau)$ we have
  the opposite inequality $\abs{gh}<\abs{g}+\abs{h}-2\tau$, or equivalently
  $(1,gh)_g > \tau$. In particular, this implies that
  $\abs{g},\abs{h}>\tau$, and we may compute, with estimates being
  uniform in $\tau$:
  \begin{equation}
    (\overline{\gamma}_g(\tau),g\gamma_h(\tau))_g \agtr \min\{ (\overline{\gamma}_g(\tau), 1)_g,
    (1,gh)_g,(gh,g\gamma_h(\tau))_g\} \agtr \tau,
  \end{equation}
  and in consequence, $d(\overline{\gamma}_g(\tau),g\gamma_h(\tau))
  \approx 0$. On the other hand, $(g,g_0) \geq \abs{g} - \tau$, so
  \begin{multline}
    (\overline{\gamma}_g(\tau),\gamma_{g_0}(\abs{g}-\tau)) \agtr \\ \agtr
    \min\{ (\overline{\gamma}_g(\tau),g), (g,g_0),
    (g_0,\gamma_{g_0}(\abs{g}-\tau))\} \approx \abs{g}-\tau,
  \end{multline}
  and thus $d(\overline{\gamma}_g(\tau),\gamma_{g_0}(\abs{g}-\tau))
  \approx 0$. Finally, we obtain
  \begin{equation}
      d(g\gamma_h(\tau),{\gamma}_{g_0}(\abs{g}-\tau))  \leq
      d(g\gamma_h(\tau),\overline{\gamma}_g(\tau)) +
      d(\overline{\gamma}_g(\tau),\gamma_{g_0}(\abs{g}-\tau))  \approx 0.
  \end{equation}
  It follows that the injective map $g\mapsto g\gamma_h(\tau)$ sends
  the ball $B_\Gamma(g_0,\tau)$ into a fixed radius neighborhood of
  the interval $\overline{\gamma}_{g_0}([0,2\tau])$. Since $\Gamma$ is
  non-elementary, the volume of the ball grows exponentially with
  $\tau$, while the neighborhood of the interval
  $\overline{\gamma}_{g_0}([0,2\tau])$ has linear growth, hence for sufficiently
  large $\tau$, independent of $g_0$ and $h$, we obtain a
  contradiction.
\end{proof}

For $\sigma>0$ we now define the \emph{double shadow} of $g\in\Gamma$ as
\begin{equation}
  \Sigma_2(g,\sigma) = B_\bG(\hat{g},e^{-\epsilon(\abs{g}/2-\sigma)})\times
  B_\bG(\check{g},e^{-\epsilon(\abs{g}/2-\sigma)}) \subseteq \bG^2.
\end{equation}
Thanks to the factor of $1/2$ in the exponent, the measure of a double
shadow of $g\in A_R$ is approximately proportional to $1/\lvert
A_R\rvert$. Just as ordinary shadows, the double shadows of elements
of $A_R$ form a cover.

\begin{proposition}
  \label{prop:double-shadows-cover}
  For sufficiently large $\sigma>0$ the family $\{ \Sigma_2(g,\sigma) :
  g\in A_R\}$ of double shadows is a cover of\/ $\bG^2$ for all $R>0$.
\end{proposition}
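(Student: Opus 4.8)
The plan is to verify the covering property pointwise: given $(\xi,\eta)\in\bG^2$ and $R>0$, I want to produce a single $g\in A_R$ for which $(\hat g,\xi)\agtr\abs g/2$ and $(\check g,\eta)\agtr\abs g/2$, with all the additive constants uniform in $R$, $\xi$ and $\eta$. Once this is achieved, the visual metric estimate $d_\epsilon\asymp_\epsilon e^{-\epsilon(\cdot,\cdot)}$ together with the definition of $\Sigma_2(g,\rho)$ gives $(\xi,\eta)\in\Sigma_2(g,\rho)$ as soon as $\rho$ exceeds a threshold depending only on $\Gamma$, $d$ and $\epsilon$; this threshold is what gives meaning to ``sufficiently large'' in the statement, and the requirement that the constructed $g$ actually land in $A_R$ is the bound on $r$ alluded to in condition~(3) of the setup.

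To build $g$ I would splice together two half-length roughly geodesic approaches, one towards $\xi$ and one towards $\eta$. Pick roughly geodesic rays $\gamma,\delta$ from $1$ to $\xi$ and $\eta$ respectively, and set $a=\gamma(R/2)$, $b=\delta(R/2)$, so that $\abs a\approx\abs b\approx R/2$ and, by~\eqref{eq:extended-gromov-product-representative-estimate}, $(a,\xi)\approx(b,\eta)\approx R/2$. The obvious candidate $ab^{-1}$ may fold back on itself at the splice point, and this is exactly where Lemma~\ref{lem:preventing-cancellation} enters: applied to the pair $a$, $h=b^{-1}$ it yields $a'\in B_\Gamma(a,\tau)$ with $\abs{a'b^{-1}}\geq\abs{a'}+\abs b-2\tau$. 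Put $g=a'b^{-1}$; combining this with the trivial bound $\abs g\leq\abs{a'}+\abs b$ and $\abs{a'}\approx\abs a\approx R/2$ gives $R-3\tau\leq\abs g\leq R+\tau$, so $g\in A_R$ provided $r\geq 3\tau$.

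What remains is bookkeeping with the hyperbolic inequality. Since there is no cancellation, $\abs g\approx\abs{a'}+\abs b$, and a direct computation from the definition of the Gromov product (using $(a')^{-1}g=b^{-1}$) gives $(a',g)\approx\abs{a'}\approx R/2$, hence $(a,g)\approx R/2$ because $d(a,a')\leq\tau$. Now $(\hat g,g)\approx\abs g\approx R$ by the choice of $\hat g$, so applying~\eqref{eq:hyperbolicity-inequality} twice along the chain $\hat g,g,a,\xi$ yields $(\hat g,\xi)\agtr\min\{R,R/2,R/2\}=R/2$. The identical computation for $g^{-1}=b(a')^{-1}$ gives $(b,g^{-1})\approx R/2$, and the chain $\check g,g^{-1},b,\eta$ then gives $(\check g,\eta)\agtr R/2$. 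Translating back through the visual metric estimate, $d_\epsilon(\hat g,\xi)\lasymp e^{-\epsilon R/2}$ and $d_\epsilon(\check g,\eta)\lasymp e^{-\epsilon R/2}$, while the radius $e^{-\epsilon(\abs g/2-\rho)}$ of $\Sigma_2(g,\rho)$ is, since $\abs g\leq R+\tau$, at least $e^{\epsilon(\rho-\tau/2)}e^{-\epsilon R/2}$; so $(\xi,\eta)\in\Sigma_2(g,\rho)$ once $\rho$ is larger than a constant independent of $R$, $\xi$, $\eta$.

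The single real difficulty is the cancellation that can occur when the two approaches are glued together at $a$; neutralizing it is precisely the role of Lemma~\ref{lem:preventing-cancellation}, and this is the only non-formal ingredient of the argument. Everything else is a routine chase through the hyperbolic inequality, the one point genuinely demanding care being that each additive constant is tracked to be uniform in $R$ — for otherwise a single $\rho$ would not serve all the annuli $A_R$ at once.
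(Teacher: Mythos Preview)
Your argument is correct and follows essentially the same line as the paper's own proof: pick half-length approximations $a=\gamma(R/2)$ and $b=\delta(R/2)$ along roughly geodesic rays to $\xi$ and $\eta$, invoke Lemma~\ref{lem:preventing-cancellation} to replace $a$ by a nearby $a'$ so that $g=a'b^{-1}$ has length $\approx R$, and then chase the hyperbolic inequality along short chains to get $(\hat g,\xi)\agtr R/2$ and $(\check g,\eta)\agtr R/2$. The only cosmetic difference is that you route the final chain explicitly through $\hat g$ and $\check g$, whereas the paper stops at $(g,\xi)\agtr R/2$ and $(g^{-1},\eta)\agtr R/2$ and leaves the passage to $\hat g,\check g$ implicit via~\eqref{eq:hat-g-xi}; also your numerical bound $r\geq 3\tau$ tacitly suppresses the rough-geodesic constant hidden in $\abs a\approx R/2$, but you already flagged this under condition~(3), so no harm done.
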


\begin{proof}
  Take $(\xi_1,\xi_2)\in\bG^2$, and for $i=1,2$ let $\gamma_i$ be a roughly
  geodesic ray from $1$ to $\xi_i$. Put $g_i=\gamma_i(R/2)$. By
  Lemma~\ref{lem:preventing-cancellation} there exist a universal
  constant $\tau$ and $g\in B_\Gamma(g_1,\tau)$ such that $\lvert
  gg_2^{-1}\rvert\approx R$, and, as it was mentioned in the
  definition of the annulus $A_R$, we may assume that its thickness is
  sufficiently large for it to contain the element $gg_2^{-1}$. We
  have $(gg_2^{-1},g) \approx R/2$ and $(g_2g^{-1},g_2) \approx R/2$, and
  in consequence
  \begin{equation}
    (gg_2^{-1},\xi_1) \agtr \min\{ (gg_2^{-1},g) , (g,g_1),
    (g_1,\xi_1)\} \agtr R/2 
  \end{equation}
  and
  \begin{equation}
    (g_2g^{-1},\xi_2) \agtr \min \{ (g_2g^{-1},g_2), (g_2,\xi_2)\}
    \agtr R/2.
  \end{equation}
  Hence, for sufficiently large $\sigma$, the double shadow
  $\Sigma_2(gg_2^{-1},\sigma)$ contains the pair $(\xi_1,\xi_2)$.
\end{proof}
Similarly as in the case of shadows, we will denote
$\Sigma_2(g)=\Sigma_2(g,\sigma)$ for some fixed $\sigma$ sufficiently
large for Proposition~\ref{prop:double-shadows-cover} to hold.

\section{Operators in the positive cone}
\label{sec:oper-posit-cone}

By the \emph{positive cone of the representation $\pi$} we will
understand the weak operator closure in $\mathcal{B}(L^2(\bG,\mu))$ of
the set of linear combinations of elements of $\pi(\Gamma)$ with
positive coefficients. The purpose of this section is to prove
Proposition~\ref{prop:measure-nuclear-operator-in-cone}, which states
that operators arising from positive kernels in $L^\infty(\bG^2)$ are
contained in the positive cone of $\pi$. The operators in question
will be constructed as weak operator limits of sequences of weighted
averages of normalized operators $\pi(g)$ with $g\in A_R$. Convergence
will be first tested on Lipschitz functions, and then established
using density of Lipschitz functions in $L^2$, and uniform boundedness
of the averages.

\subsection{Uniform boundedness of averages of $P_g^{1/2}$}
\label{sec:unif-bound-aver}

Recall that $P_g=dg_*\mu/d\mu$. Let us define 
\begin{equation}
  \tP_g=\frac{P_g^{1/2}}{\lVert{P_g^{1/2}}\rVert_1}.
\end{equation}
We begin by finding an estimate for the norm $\lVert
P_g^{1/2}\rVert_1$, and using it to get a more manageable
approximation of the function $\tP_g$.
\begin{lemma}
  \label{lem:poisson-L1-norm-estimate}
  The $L^1$-norms of $P_g^{1/2}$ satisfy the estimate
  \begin{equation}
    \lVert P_g^{1/2} \rVert_1 \asymp \omega^{-\abs{g}/2}(1+\abs{g})
  \end{equation}
  uniformly in $g$. Moreover,
  \begin{equation}
     \tP_g(\xi)\asymp \frac{\omega^{(g,\xi)}}{1+\abs{g}} \lasymp \frac{d_\epsilon(\hat{g},\xi)^{-D}}{1+\abs{g}}.
  \end{equation}
\end{lemma}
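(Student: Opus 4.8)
The estimate $\tP_g(\xi)\asymp \omega^{(g,\xi)}/(1+\abs{g})$ is immediate from the $L^1$-norm estimate together with the known estimate $P_g^{1/2}(\xi)\asymp\omega^{(g,\xi)-\abs{g}/2}$, so the whole content lies in showing $\norm{P_g^{1/2}}_1\asymp\omega^{-\abs{g}/2}(1+\abs{g})$. Using $P_g^{1/2}(\xi)\asymp\omega^{(g,\xi)-\abs{g}/2}$ we must estimate $\int_\bG\omega^{(g,\xi)}\,d\mu(\xi)$, and by \eqref{eq:omega-hat-g-xi} this integrand is $\asymp\min\{\omega^{\abs{g}},d_\epsilon(\hat g,\xi)^{-D}\}$. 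So the plan is to compute (up to multiplicative constants) the integral
\begin{equation*}
  I(g)=\int_\bG \min\{\omega^{\abs{g}},d_\epsilon(\hat g,\xi)^{-D}\}\,d\mu(\xi)
\end{equation*}
and show $I(g)\asymp\omega^{\abs{g}/2}(1+\abs{g})$; then $\norm{P_g^{1/2}}_1\asymp\omega^{-\abs{g}/2}I(g)$ gives the result.

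To estimate $I(g)$ I would split $\bG$ according to the value of $d_\epsilon(\hat g,\xi)$. The two regimes meet at $d_\epsilon(\hat g,\xi)\asymp\omega^{-\abs{g}/D}=e^{-\epsilon\abs{g}}$, i.e.\ on the shadow-scale ball around $\hat g$. On the ball $B=B_\bG(\hat g,e^{-\epsilon\abs{g}})$ the integrand equals $\omega^{\abs{g}}$, so its contribution is $\asymp\omega^{\abs{g}}\mu(B)\asymp\omega^{\abs{g}}\cdot e^{-\epsilon\abs{g}D}=\omega^{\abs{g}}\omega^{-\abs{g}}=1$ by Ahlfors regularity. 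Off $B$ the integrand is $d_\epsilon(\hat g,\xi)^{-D}$; I would organize this using dyadic annuli $\{\xi:2^{-k-1}<d_\epsilon(\hat g,\xi)\le 2^{-k}\}$ for $e^{-\epsilon\abs{g}}\lesssim 2^{-k}\le \diam\bG=1$, whose $\mu$-measure is $\asymp 2^{-kD}$ by Ahlfors regularity; on each such annulus the integrand is $\asymp 2^{kD}$, so each annulus contributes $\asymp 1$, and there are $\asymp\abs{g}$ of them (the range of admissible $k$ has length $\asymp\epsilon\abs{g}/\log 2$). Summing, $I(g)\asymp 1+\abs{g}\asymp 1+\abs{g}$, which is what is needed. (To be careful at $\abs{g}$ small, the $1+\abs{g}$ rather than $\abs{g}$ absorbs the case $\abs{g}\aless\rho$ where the shadow-scale ball is all of $\bG$.)

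The main technical point to handle carefully is the interchange between the multiplicative estimate $P_g^{1/2}\asymp\omega^{(g,\xi)-\abs{g}/2}$ and the integral: since the estimate is uniform in both $g$ and $\xi$, integrating preserves the $\asymp$, so this is harmless, but one must make sure the implied constants in \eqref{eq:omega-hat-g-xi} and in Ahlfors regularity are uniform, which they are by Theorem~\ref{thm:bhm}. The only genuine obstacle is bookkeeping the dyadic decomposition so that the count of annuli is $\asymp\abs{g}$ with uniform constants — in particular checking that the lower endpoint $e^{-\epsilon\abs{g}}$ and the upper endpoint $1$ give a logarithmic range proportional to $\abs{g}$, and that the boundary cases (the innermost ball, the case where $e^{-\epsilon\abs{g}}\geq 1$) are absorbed into the $1+\abs{g}$ factor rather than producing a spurious additive constant. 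Everything else is a routine geometric-series summation.
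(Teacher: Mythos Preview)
Your computation is correct and follows essentially the paper's approach: reduce to $I(g)=\int_\bG\min\{\omega^{\abs{g}},d_\epsilon(\hat g,\xi)^{-D}\}\,d\mu(\xi)$ and show $I(g)\asymp 1+\abs{g}$ via Ahlfors regularity. The only difference is cosmetic---you use a dyadic annular decomposition, while the paper uses the layer-cake formula $I(g)=1+\int_1^{\omega^{\abs{g}}}\mu\{\xi:d_\epsilon(\hat g,\xi)^{-D}>t\}\,dt\asymp 1+\int_1^{\omega^{\abs{g}}}t^{-1}\,dt$, which is the continuous version of the same idea.

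One slip to fix: in your plan you write ``show $I(g)\asymp\omega^{\abs{g}/2}(1+\abs{g})$'', but the correct target (which your subsequent computation actually reaches) is $I(g)\asymp 1+\abs{g}$; the factor $\omega^{-\abs{g}/2}$ has already been pulled out of $\norm{P_g^{1/2}}_1$.
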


\begin{proof}
  By estimate~\eqref{eq:hat-g-xi}, we have
  \begin{equation}
    P_g^{1/2}(\xi)\asymp \omega^{(g,\xi)-\abs{g}/2} \asymp
    \omega^{-\abs{g}/2} \min\{\omega^{\abs{g}}, d_\epsilon(\hat{g},\xi)^{-D}\}.
  \end{equation}
  Using Ahlfors regularity and the Fubini's theorem, we calculate
  \begin{equation}
    \begin{split}
      \omega^{\abs{g}/2} \lVert P_g^{1/2} \rVert_1 & \asymp \int_{\bG}
      \min\{\omega^{\abs{g}},
      d_\epsilon(\hat{g},\xi)^{-D}\}\,d\mu(\xi) = \\
      & = \int_0^{\omega^{\abs{g}}} \mu\{ \xi :
      d_\epsilon(\hat{g},\xi)^{-D} > t \}\,dt = \\
       & = 1 + \int_{1}^{\omega^{\abs{g}}} \mu\{ \xi :
      d_\epsilon(\hat{g},\xi)^{-D} > t \}\,dt =\\
      & = 1 + \int_{1}^{\omega^{\abs{g}}} \mu(B_{\bG}(\hat{g}, t^{-1/D})) \,dt \asymp \\ 
      & \asymp 1 +
      \int_{1}^{\omega^{\abs{g}}} t^{-1}\,dt \asymp 1+\abs{g}.
    \end{split}
  \end{equation}
  The second part follows by combining this estimate with~\eqref{eq:Pg-def-estimate} and~\eqref{eq:omega-hat-g-xi}.
\end{proof}

Now we prove the crucial result, stating that the averages of the
functions $\tP_g$ over $A_R$ are uniformly bounded in the $L^\infty$
norm. Later on, the problem of uniform boundedness of weighted
averages of suitably normalized operators $\pi(g)$ will be reduced to
this estimate.

\begin{proposition}
  \label{prop:poisson-cone-avg-bounded}
  The estimate 
  \begin{equation}
   \sum_{g\in A_R}
    \tP_g(\eta) \lasymp \omega^R
  \end{equation}
  holds uniformly in $R$ and $\eta$.
\end{proposition}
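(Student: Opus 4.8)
The plan is to estimate the sum $\sum_{g\in A_R}\tP_g(\eta)$ by partitioning $A_R$ according to the distance from $\hat g$ to the fixed point $\eta$. Using Lemma~\ref{lem:poisson-L1-norm-estimate}, we have $\tP_g(\eta)\asymp \omega^{(g,\eta)}/(1+\abs g)\asymp \omega^{(g,\eta)}/(1+R)$ uniformly for $g\in A_R$, since $\abs g\approx R$ on the annulus. By estimate~\eqref{eq:omega-hat-g-xi}, $\omega^{(g,\eta)}\asymp\min\{\omega^{R}, d_\epsilon(\hat g,\eta)^{-D}\}$. So it suffices to bound
\begin{equation}
  \frac{1}{1+R}\sum_{g\in A_R}\min\{\omega^R, d_\epsilon(\hat g,\eta)^{-D}\}.
\end{equation}

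The natural dyadic partition is by the value of $d_\epsilon(\hat g,\eta)$. For $k=0,1,\dots,\lfloor R\rfloor$ (roughly), let $S_k$ be the set of $g\in A_R$ with $d_\epsilon(\hat g,\eta)\asymp e^{-\epsilon k}$, i.e.\ with $\hat g$ lying in the annular region $B(\eta,e^{-\epsilon k})\setminus B(\eta,e^{-\epsilon(k+1)})$; for such $g$ one has $\min\{\omega^R,d_\epsilon(\hat g,\eta)^{-D}\}\asymp\omega^k$ (using $\omega=e^{D\epsilon}$), while the finitely many $g$ with $d_\epsilon(\hat g,\eta)\aless e^{-\epsilon R}$ contribute $\asymp\omega^R$ each. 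The key point is to count $\abs{S_k}$: I claim $\abs{S_k}\lasymp\omega^{R-k}$, uniformly in $R$, $k$, $\eta$. This is exactly the content already available: the condition $\hat g\in B(\eta,e^{-\epsilon k})$ together with $g\in A_R$ says precisely that $\Sigma(g)\cap B(\eta,e^{-\epsilon k})\ne\emptyset$ up to adjusting the constant $\rho$ (since $\Sigma(g)$ is a ball around $\hat g$ of radius $\asymp e^{-\epsilon(\abs g-\rho)}\lasymp e^{-\epsilon(R-\rho)}$, which for $k\le R$ does not help enlarge the region beyond a bounded factor), so $S_k\subseteq C_R(\eta,k)$ up to constants, and Lemma~\ref{lem:growth-of-cones} gives $\abs{C_R(\eta,k)}\lasymp\omega^{R-k}$ when $k\le R$. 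Summing,
\begin{equation}
  \sum_{g\in A_R}\tP_g(\eta)\lasymp\frac{1}{1+R}\Big(\omega^R+\sum_{k=0}^{\lfloor R\rfloor}\omega^{R-k}\cdot\omega^k\Big)
  \asymp\frac{1}{1+R}\sum_{k=0}^{\lfloor R\rfloor}\omega^R\asymp\omega^R,
\end{equation}
where the lone $\omega^R$ term absorbs the bounded-count contribution from $g$ with $\hat g$ within $e^{-\epsilon R}$ of $\eta$, and the factor $R+1$ in the denominator cancels the $R+1$ terms of the sum over $k$ — this is where the $1+\abs g$ in Lemma~\ref{lem:poisson-L1-norm-estimate} is essential.

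The main obstacle is the counting estimate $\abs{S_k}\lasymp\omega^{R-k}$, i.e.\ making precise that "$g\in A_R$ with $\hat g$ close to $\eta$" is governed by the cone $C_R(\eta,k)$ so that Lemma~\ref{lem:growth-of-cones} applies; care is needed because $\hat g$ is only defined up to the additive ambiguity in $(g,\hat g)\approx\abs g$, and because the shadow $\Sigma(g)=\Sigma(g,\rho)$ has a built-in radius inflation by $e^{\epsilon\rho}$ — but both are bounded effects, absorbed into the implied constants and, if necessary, into a harmless shift of the index $k$ by an additive constant. Once that is in place, the dyadic summation is the routine computation displayed above.
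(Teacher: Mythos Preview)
Your proof is correct and follows essentially the same route as the paper's: reduce via Lemma~\ref{lem:poisson-L1-norm-estimate} and~\eqref{eq:omega-hat-g-xi} to bounding $\frac{1}{1+R}\sum_{g\in A_R}\min\{\omega^R,d_\epsilon(\hat g,\eta)^{-D}\}$, then count the level sets using the cone estimate of Lemma~\ref{lem:growth-of-cones}, obtaining a factor of $R$ that cancels the $1/(1+R)$. The only cosmetic difference is that you partition dyadically in $k$, whereas the paper writes the same count as a layer-cake integral $\int_1^{\omega^R}\abs{C_R(\eta,\log t/D\epsilon)}\,dt$; these are interchangeable.
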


\begin{proof}
  By Lemma~\ref{lem:poisson-L1-norm-estimate}, and \eqref{eq:omega-hat-g-xi} we have
  \begin{equation}
    \sum_{g\in A_R}
    \tP_g(\eta) \asymp
    \frac{1}{(1+R)} \sum_{g\in
      A_R} \min\{\omega^{R}, d_\epsilon(\hat{g},\eta)^{-D}\}.
  \end{equation}
  The sum on the right can be estimated in a similar fashion as in the
  proof of Lemma~\ref{lem:poisson-L1-norm-estimate}, yielding
  \begin{equation}
    \begin{split}
      \sum_{g\in A_R} \min\{\omega^{R},
      d_\epsilon(\hat{g},\eta)^{-D}\} & \lasymp \omega^R+\int_1^{\omega^R} \abs{\{ g\in A_R :
       d_\epsilon(\hat{g},\eta)^{-D} > t\}}\,dt \leq \\
     & \leq \omega^R + \int_1^{\omega^R}\abs{ C_R(\eta, \log_{\omega} t)}\,dt
    \end{split}
  \end{equation}
  But for $1 < t < \omega^R$ we have $0< \log_{\omega} t < R$, so
  we may apply Lemma~\ref{lem:growth-of-cones} to obtain
  \begin{equation}
    \begin{split}
       \int_1^{\omega^R}\abs{ C_R(\eta, \log_{\omega} t)}\,dt \asymp \int_1^{\omega^R}\omega^Rt^{-1}\,dt =
      \omega^RR\log\omega,
    \end{split}
  \end{equation}
  which ends the proof.
\end{proof}

\subsection{Approximation on the space of Lipschitz functions}
\label{sec:appr-space-lipsch}

Denote by $\Lip(\bG)$ the vector space of Lipschitz functions on
$(\bG,d_\epsilon)$. Let $\lambda(\phi)$ be the Lipschitz constant of
$\phi\in\Lip(\bG)$. By the Lebesgue differentiation theorem
\cite[Theorem 1.8]{Heinonen2001}, which is valid in particular for any Ahlfors
regular metric measure space, the characteristic functions of balls
span a dense subspace of $L^2(\bG,\mu)$, and since they can be
approximated by Lipschitz functions, it follows that $\Lip(\bG)$ is a
dense subspace of $L^2(\bG,\mu)$.

Define the normalized operator $\wt\pi(g)=\pi(g)/\lVert P_g^{1/2}\rVert_1$. Since
\begin{equation}
  \lVert P_g^{1/2}\rVert_1
  =\langle \pi(g)\one,\one\rangle = \langle\one,\pi(g^{-1})\one\rangle
  = \lVert P_{g^{-1}}^{1/2}\rVert_1,
\end{equation}
where $\one$ is the constant function taking value $1$ everywhere, the operators $\wt\pi(g)$ satisfy $\wt\pi(g)^* =
\wt\pi(g^{-1})$. Moreover, as the next lemma shows, it turns out that
on Lipschitz functions $\wt\pi(g)$ can be approximated by suitable evaluations.

\begin{lemma}
  \label{lem:Lipschitz-eval-approx}
  For $\phi,\psi\in\Lip(\bG)$ we have
  \begin{equation}
    \abs{\langle\wt\pi(g)\phi, \psi\rangle -
      \phi(\check{g})\conj{\psi(\hat{g})}} \lasymp
    \frac{\lambda(\phi)\norm{\psi}_\infty + \lambda(\psi)\norm{\phi}_\infty}{(1+\abs{g})^{1/D}},
  \end{equation}
  uniformly in $g$, $\phi$, and $\psi$.
\end{lemma}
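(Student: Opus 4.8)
The plan is to realize $\langle\wt\pi(g)\phi,\psi\rangle$ as an average of $\phi(g^{-1}\xi)\conj{\psi(\xi)}$ against the probability density $\tP_g$, and then show this average is close to its ``value at the concentration point'' $\phi(\check g)\conj{\psi(\hat g)}$. Since $\wt\pi(g)\phi=\tP_g\cdot(\phi\circ g^{-1})$ and $\int_\bG\tP_g\,d\mu=1$,
\begin{equation*}
  \langle\wt\pi(g)\phi,\psi\rangle-\phi(\check g)\conj{\psi(\hat g)}
  =\int_\bG\tP_g(\xi)\bigl(\phi(g^{-1}\xi)\conj{\psi(\xi)}-\phi(\check g)\conj{\psi(\hat g)}\bigr)\,d\mu(\xi).
\end{equation*}
Telescoping the integrand through $\phi(\check g)\conj{\psi(\xi)}$ and estimating $\abs{\phi(\check g)}\leq\norm\phi_\infty$, $\abs{\psi(\xi)}\leq\norm\psi_\infty$, I bound the left-hand side in modulus by $\norm\psi_\infty I_\phi+\norm\phi_\infty I_\psi$, where
\begin{equation*}
  I_\psi=\int_\bG\tP_g(\xi)\abs{\psi(\xi)-\psi(\hat g)}\,d\mu(\xi),\qquad
  I_\phi=\int_\bG\tP_g(\xi)\abs{\phi(g^{-1}\xi)-\phi(\check g)}\,d\mu(\xi).
\end{equation*}
Everything thus reduces to estimating these two oscillation integrals.

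For $I_\psi$ I would insert the Lipschitz bound $\abs{\psi(\xi)-\psi(\hat g)}\leq\lambda(\psi)\,d_\epsilon(\xi,\hat g)$ and prove
\begin{equation*}
  \int_\bG\tP_g(\xi)\,d_\epsilon(\xi,\hat g)\,d\mu(\xi)\lasymp\frac{1}{1+\abs g}.
\end{equation*}
This is where the geometry enters: by Lemma~\ref{lem:poisson-L1-norm-estimate} and \eqref{eq:omega-hat-g-xi} one has $\tP_g(\xi)\asymp(1+\abs g)^{-1}\min\{\omega^{\abs g},d_\epsilon(\hat g,\xi)^{-D}\}$, and a layer-cake integration using Ahlfors regularity, split at the transition radius $s_0=e^{-\epsilon\abs g}$ (where $\omega^{\abs g}=s_0^{-D}$), gives a contribution $\asymp\omega^{\abs g}s_0^{D+1}/(1+\abs g)=s_0/(1+\abs g)$ from the ball $B(\hat g,s_0)$ and $\asymp(1+\abs g)^{-1}\int_{s_0}^{1}t^{1-D}\cdot t^{D-1}\,dt\asymp(1+\abs g)^{-1}$ from its complement, the factor $t^{D-1}\,dt$ standing for $d\mu$ at $d_\epsilon$-distance $t$ from $\hat g$. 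Hence $I_\psi\lasymp\lambda(\psi)/(1+\abs g)$.

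For $I_\phi$ the key point is to convert the awkward argument $g^{-1}\xi$ into the shape just handled. Substituting $\eta=g^{-1}\xi$ and using the Radon--Nikodym cocycle identity $P_g(g\eta)P_{g^{-1}}(\eta)=1$ together with $\norm{P_g^{1/2}}_1=\norm{P_{g^{-1}}^{1/2}}_1$, one checks that $\tP_g\,d\mu$ is precisely the $g$-pushforward of $\tP_{g^{-1}}\,d\mu$, so that $\int_\bG\tP_g(\xi)F(g^{-1}\xi)\,d\mu(\xi)=\int_\bG\tP_{g^{-1}}(\eta)F(\eta)\,d\mu(\eta)$ for every $F$. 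Taking $F=\abs{\phi-\phi(\check g)}$ yields $I_\phi=\int_\bG\tP_{g^{-1}}(\eta)\abs{\phi(\eta)-\phi(\check g)}\,d\mu(\eta)$; since $\check g=\widehat{g^{-1}}$ and $\abs{g^{-1}}=\abs g$, the estimate of the previous paragraph applies verbatim with $g$ replaced by $g^{-1}$, giving $I_\phi\lasymp\lambda(\phi)/(1+\abs g)$.

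Combining the two bounds, $\abs{\langle\wt\pi(g)\phi,\psi\rangle-\phi(\check g)\conj{\psi(\hat g)}}\lasymp(\lambda(\phi)\norm\psi_\infty+\lambda(\psi)\norm\phi_\infty)/(1+\abs g)$, which already implies the claimed estimate since $D>1$ forces $(1+\abs g)^{1/D}\leq 1+\abs g$ (so I expect one can even sharpen the exponent). I anticipate the one genuinely non-mechanical step to be the $g\leftrightarrow g^{-1}$ reduction in the third paragraph: recognizing that, after the exact change of variables driven by the boundary cocycle, the $\phi(g^{-1}\xi)$-term is literally an $I_\psi$-type integral attached to $g^{-1}$ and the point $\widehat{g^{-1}}=\check g$. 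The layer-cake estimate is routine, but one must integrate against the Ahlfors-regular $\mu$ at the correct scale $s_0=e^{-\epsilon\abs g}$ and take care to keep $\lambda(\psi)$, not $\norm\psi_\infty$, as the coefficient of the near part.
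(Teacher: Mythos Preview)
Your proof is correct and follows the same architecture as the paper's: the same telescoping into two oscillation integrals $I_\phi$ and $I_\psi$, and the same reduction of the $\phi(g^{-1}\xi)$-term to a $\tP_{g^{-1}}$-integral against $\abs{\phi-\phi(\check g)}$ (the paper reaches this via the adjoint identity $\wt\pi(g)^*=\wt\pi(g^{-1})$ rather than your explicit cocycle change of variables, but it is the same computation). Your estimate of the key integral $\int_\bG \tP_g(\xi)\,d_\epsilon(\hat g,\xi)\,d\mu(\xi)$ is in fact sharper than the paper's: you split at the natural transition radius $s_0=e^{-\epsilon\abs{g}}$ and see that the outer piece is $\asymp(1+\abs{g})^{-1}$, whereas the paper splits at an arbitrary $\theta$, bounds the outer part crudely by $\theta^{1-D}/(1+\abs{g})$ via $d_\epsilon(\hat g,\xi)^{1-D}\le\theta^{1-D}$, and then optimizes in $\theta$ to obtain only $(1+\abs{g})^{-1/D}$. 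So your suspicion that the exponent can be improved is justified.
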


\begin{proof}
  We have
  \begin{equation}
    \begin{split}
      \Big\lvert\langle\wt\pi(g)\phi, \psi\rangle & -
      \phi(\check{g})\conj{\psi(\hat{g})}\Big\rvert \leq \\
      & \leq \abs{ \langle
        \wt\pi(g)\phi,\psi-\psi(\hat{g})\one\rangle} +
      \abs{\conj{\psi(\hat{g})} \Big(\langle\phi,\wt\pi(g^{-1})\one\rangle -
        \phi(\check{g})\Big) } \leq \\
      & \leq \norm{\phi}_\infty \int_\bG \tP_g(\xi) \Big\lvert
      \psi(\xi)-\psi(\hat{g})\Big\rvert\,d\mu(\xi) + \\
      & + \norm{\psi}_\infty \int_\bG \tP_{g^{-1}}(\xi) \Big\lvert
      \phi(\xi)-\phi(\check{g})\Big\rvert \, d\mu(\xi).
    \end{split}
  \end{equation}
  Both terms are of the same form, so we will estimate only the first
  one. Since $\psi$ is Lipschitz, we get
  \begin{equation}
    \int_\bG \tP_g(\xi) \Big\lvert
    \psi(\xi)-\psi(\hat{g})\Big\rvert\,d\mu(\xi)\leq
    \lambda(\psi)\int_{\bG}\tP_g(\xi)d_\epsilon(\hat{g},\xi)\,d\mu(\xi),
  \end{equation}
  and by  integrating
  separately on some ball $B=B_{\bG}(\hat{g},\rho)$ and its complement, we obtain
  \begin{equation}
    \int_B \tP_g(\xi) d_\epsilon(\hat{g},\xi)\,d\mu(\xi) \leq \rho
    \lVert\tP_g\rVert_1 =\rho
  \end{equation}
  and, using Lemma~\ref{lem:poisson-L1-norm-estimate},
  \begin{equation}
    \int_{\bG\setminus B}\tP_g(\xi) d_\epsilon(\hat{g},\xi)\,d\mu(\xi)
    \lasymp \int_{\bG\setminus B}
    \frac{d_\epsilon(\hat{g},\xi)^{1-D}}{1+\abs{g}}\,d\mu(\xi)\lasymp\frac{\rho^{1-D}}{1+\abs{g}},
  \end{equation}
  since $D>1$. We finish by taking $\rho=(1+\abs{g})^{-1/D}$.
\end{proof}

\subsection{Constructing operators in the positive cone}
\label{sec:constr-oper-posit}

For a function $K\in L^\infty(\bG^2,\mu^2)$, define the operator
$T_K\in\mathcal{B}(L^2(\bG,\mu))$ with kernel $K$ by
\begin{equation}
  \langle T_K \phi,\psi\rangle = \int_{\bG^2}
  \phi(\xi)\conj{\psi(\eta)}K(\xi,\eta)\, d\mu^2(\xi,\eta).
\end{equation}

\begin{proposition}
  \label{prop:measure-nuclear-operator-in-cone}
  The operator $T_K$ with kernel $K \geq 0$ is in the positive cone of $\pi$.
\end{proposition}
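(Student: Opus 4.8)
The plan is to realize $T_K$ as a weak operator limit of suitable weighted averages of the normalized operators $\wt\pi(g)$ over the annuli $A_R$, so that each average lies in the cone spanned by $\pi(\Gamma)$ and hence the limit lies in its weak operator closure, i.e.\ the positive cone of $\pi$. Concretely, for each $R$ I would choose nonnegative weights $c_g^{(R)}$ for $g\in A_R$ and set $S_R=\sum_{g\in A_R}c_g^{(R)}\wt\pi(g)$; since each $c_g^{(R)}\geq0$ and $\wt\pi(g)$ is a positive multiple of $\pi(g)$, each $S_R$ is in the cone spanned by $\pi(\Gamma)$. The weights should be designed using the double shadows $\Sigma_2(g)$ from Proposition~\ref{prop:double-shadows-cover}: roughly, $c_g^{(R)}$ should be proportional to $\int_{\Sigma_2(g)}K\,d\mu^2$, possibly divided by $\mu(\Sigma_2(g))\asymp\omega^{-R}$ or weighted by a partition-of-unity factor subordinate to the cover, so that summing over $g\in A_R$ reconstructs the integral of $K$ against the kernel pairing.

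The heart of the argument is to show that $\langle S_R\phi,\psi\rangle\to\langle T_K\phi,\psi\rangle$ for $\phi,\psi\in\Lip(\bG)$. Here Lemma~\ref{lem:Lipschitz-eval-approx} is the key input: it replaces $\langle\wt\pi(g)\phi,\psi\rangle$ by $\phi(\check g)\conj{\psi(\hat g)}$ up to an error that is $O((1+|g|)^{-1/D})=O(R^{-1/D})$, uniformly for $g\in A_R$. Therefore
\begin{equation}
  \langle S_R\phi,\psi\rangle = \sum_{g\in A_R} c_g^{(R)}\phi(\check g)\conj{\psi(\hat g)} + O\Big(R^{-1/D}\sum_{g\in A_R}c_g^{(R)}\big(\lambda(\phi)\norm{\psi}_\infty+\lambda(\psi)\norm{\phi}_\infty\big)\Big).
\end{equation}
Provided the total mass $\sum_{g\in A_R}c_g^{(R)}$ stays bounded (which it will, since $\sum_g c_g^{(R)}\asymp\int_{\bG^2}K\,d\mu^2\leq\norm{K}_\infty$ by the bounded-multiplicity cover property, using that the double shadows of $A_R$ cover $\bG^2$ with the measures summing to a bounded quantity), the error term vanishes as $R\to\infty$. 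For the main term, the points $(\check g,\hat g)$ for $g\in A_R$ become equidistributed in the sense that $\Sigma_2(g)\ni(\check g,\hat g)$ has diameter $\asymp\omega^{-R/2}\to0$, and $\phi\otimes\conj\psi$ is continuous, so the Riemann-sum $\sum_g c_g^{(R)}\phi(\check g)\conj{\psi(\hat g)}$ converges to $\int_{\bG^2}\phi(\xi)\conj{\psi(\eta)}K(\xi,\eta)\,d\mu^2=\langle T_K\phi,\psi\rangle$, once the weights are correctly normalized so they approximate the measure $K\,d\mu^2$ weakly.

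Finally, to upgrade weak convergence on $\Lip(\bG)$ to weak operator convergence on all of $L^2(\bG,\mu)$, I would invoke uniform boundedness of the operators $S_R$ together with density of $\Lip(\bG)$ in $L^2$. Boundedness of $S_R$ follows from Proposition~\ref{prop:poisson-cone-avg-bounded}: the Schur test bounds $\norm{S_R}$ by the supremum over $\eta$ of $\sum_{g\in A_R}c_g^{(R)}\tP_g(\eta)$ (and the analogous sum in the other variable using $\tP_{g^{-1}}$), and since $c_g^{(R)}\lasymp\norm{K}_\infty\omega^{-R}$ while $\sum_{g\in A_R}\tP_g(\eta)\lasymp\omega^R$ uniformly, these Schur sums are $O(\norm{K}_\infty)$, uniformly in $R$. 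A standard $3\epsilon$-argument then yields $\langle S_R\phi,\psi\rangle\to\langle T_K\phi,\psi\rangle$ for all $\phi,\psi\in L^2$, so $T_K$ is in the weak operator closure of the cone, as claimed. The main obstacle I anticipate is the bookkeeping in choosing the weights $c_g^{(R)}$: one must simultaneously arrange that they are nonnegative, that their total mass and Schur sums are controlled by $\norm{K}_\infty$ uniformly in $R$, and that the resulting Riemann sums genuinely converge to $\int K\,\phi\otimes\conj\psi$ — this requires carefully exploiting the cover properties of the double shadows (bounded multiplicity and $\mu^2(\Sigma_2(g))\asymp\omega^{-R}$) rather than any explicit equidistribution statement, which is exactly the simplification over the Bader--Muchnik approach advertised in the introduction.
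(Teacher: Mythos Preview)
Your proposal is essentially the paper's own argument: weighted averages of $\wt\pi(g)$ over $A_R$, convergence on Lipschitz vectors via Lemma~\ref{lem:Lipschitz-eval-approx}, uniform $L^2$-boundedness from Proposition~\ref{prop:poisson-cone-avg-bounded}, then density. The one place where you are vague and the paper is concrete is exactly the obstacle you flagged. The paper does \emph{not} use bounded multiplicity of the double-shadow cover (that is never established), nor a partition of unity: it simply enumerates $A_R=\{g_1,\dots,g_N\}$ and greedily disjointifies, setting $V_i=\Sigma_2(g_i)\setminus\bigcup_{j<i}\Sigma_2(g_j)$ and $c_{g_i}^{(R)}=\int_{V_i}K\,d\mu^2$. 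This single device delivers everything you need at once: the $V_i$ partition $\bG^2$, so $\sum_i c_{g_i}^{(R)}=\int K\,d\mu^2\leq\norm{K}_\infty$; each $c_{g_i}^{(R)}\leq\norm{K}_\infty\mu^2(\Sigma_2(g_i))\lasymp\norm{K}_\infty\omega^{-R}$; and since $V_i\subseteq\Sigma_2(g_i)$ has diameter $\lasymp e^{-\epsilon R/2}$, the Riemann-sum comparison $\sum_i\int_{V_i}\lvert\phi(\check g_i)\conj{\psi(\hat g_i)}-\phi(\xi)\conj{\psi(\eta)}\rvert K\,d\mu^2$ is controlled directly by the Lipschitz constants. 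For the uniform bound the paper phrases things as $L^\infty\to L^\infty$ and $L^1\to L^1$ bounds followed by Riesz--Thorin, but this is equivalent to your Schur-test formulation.
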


\begin{proof}
  We will construct a one-parameter family of operators $S_R$, with
  $R\geq0$, in the positive cone of $\pi$, converging to $T_K$ in the
  weak operator topology. We start by fixing $R$ and taking a measurable partition
  $\mathcal{V} = \{V_g : g\in A_R\}$ of $\bG^2$ such that $V_g
  \subseteq \Sigma_2(g)$, which can be obtained for instance by
  putting a linear order on the finite set $A_R$ and taking
  \begin{equation}
    V_g = \Sigma_2(g) \setminus \bigcup_{h<g}\Sigma_2(h).
  \end{equation}
  By Proposition~\ref{prop:double-shadows-cover}, $\mathcal{V}$ indeed covers
  $\bG^2$, and is disjoint by definition. Now, put
  \begin{equation}
    w_g = \int_{V_g} K\,d\mu^2 
  \end{equation}
  and define
  \begin{equation}
    S_R = \sum_{g\in A_R} w_g\wt\pi(g).
  \end{equation}
  For any $\phi,\psi\in\Lip(\bG)$ we get, using
  Lemma~\ref{lem:Lipschitz-eval-approx} and the fact that $V_g\subseteq\Sigma_2(g)$
  \begin{equation}
    \begin{split}
      \Big\lvert\langle S_R\phi,\psi\rangle - & \langle T_K\phi,\psi\rangle\Big\rvert
      \leq \sum_{g\in A_R} \int_{V_g}K\,d\mu^2 \Big\lvert{
        \langle\wt\pi(g)\phi,\psi\rangle -
        \phi(\check{g})\conj{\psi(\hat{g})}}\Big\rvert + \\
      & + \sum_{g\in A_R} \int_{V_g}
      \Big\lvert\phi(\check{g})\conj{\psi(\hat{g})} -
      \phi(\xi)\conj{\psi(\eta)}\Big\rvert K(\xi,\eta)\,d\mu^2(\xi,\eta) \lasymp\\
      & \lasymp\norm{K}_1\frac{\lambda(\phi)\norm{\psi}_\infty +
        \lambda(\psi)\norm{\phi}_\infty}{(1+R)^{1/D}} + \\
      & + \int_{\bG^2} e^{-\epsilon R/2} \Big( \lambda(\phi)\norm{\psi}_\infty +
      \lambda(\psi)\norm{\phi}_\infty\Big)K(\xi,\eta)\,d\mu^2(\xi,\eta),
    \end{split}
  \end{equation}
  so $\langle S_R\phi,\psi\rangle \xrightarrow[R\to\infty]{} \langle
  T_K\phi,\psi\rangle$. 

  By density of $\Lip(\bG)$ it now remains to show that the operators
  $S_R$ are uniformly bounded. First, observe that $w_g \leq
  \norm{K}_\infty \mu(\Sigma_2(g)) \lasymp \omega^{-\abs{g}}$
  uniformly in $g$, and thus by Proposition~\ref{prop:poisson-cone-avg-bounded}
  \begin{equation}
    \norm{S_R\one}_\infty = \sup_{\xi\in\bG}\sum_{g\in A_R} w_g
    \tP_g(\xi) \lasymp 1
  \end{equation}
  uniformly in $R$. Moreover, since $A_R$ is symmetric, the same
  estimate holds for the adjoint operator $S_R^*$. Now, let
  $\phi,\psi\in L^4(\bG,\mu)\subseteq L^2(\bG,\mu)$. The system of
  weights $\{w_g\}$ can be treated as a measure on $A_R$; using the Schwarz
  inequality in the space $L^2(\bG\times A_R,\mu\otimes w)$, we obtain
  \begin{equation}
    \begin{split}
      \lvert\langle & S_R  \phi, \psi \rangle\rvert^2 = \abs{\,
        \int\limits_{\bG\times A_R} \tP_g(\xi)\phi(g^{-1}\xi)\psi(\xi)\,d\xi dg }^2 \leq
      \\
      & \leq
      \int\limits_{\bG\times A_R} \tP_g(\xi)\abs{\phi(g^{-1}\xi)}^2\,d\xi dg 
      \int\limits_{\bG\times A_R} \tP_g(\xi)\abs{\psi(\xi)}^2\,d\xi  dg = \\
      & = \langle S_R\abs{\phi}^2, \one\rangle\langle S_R\one,
      \abs{\psi}^2\rangle \leq\lVert{\phi^2}\rVert_1 \norm{S_R^*\one}_\infty
      \norm{S_R\one}_\infty \lVert{\psi^2}\rVert_1 \lasymp
      \norm{\phi}_2^2\norm{\psi}_2^2
    \end{split}
  \end{equation}
  uniformly in $R$, and thus $S_R$ are uniformly bounded and converge
  to $T_K$ in the weak operator topology as desired.
\end{proof}

By approximating the projections \(P_{E}\colon L^{2}(\bd\Gamma,\mu)\to L^{2}(E,\mu)\) by operators \(T_{K}\) as above, we get the following important corollary.

\begin{corollary}
  \label{lem:projections-in-cone}
  For any measurable set $E\subseteq \bG$ the orthogonal projection
  $P_E$ onto $L^2(E,\mu)\subseteq L^2(\bG,\mu)$ is contained in the
  positive cone of $\pi$.
\end{corollary}

\begin{proof}
  Fix $E\subseteq\bG$. For $\rho > 0$  define $K_\rho\in
  L^\infty(\bG^2,\mu^2)$ as
  \begin{equation}
    K_\rho(\xi,\eta) = \frac{1}{\mu(B_\bG(\xi,\rho))} \chi_E(\xi)\chi_{B_\bG(\xi,\rho)}(\eta)
  \end{equation}
  and let $T_\rho$ be the operator with kernel $K_\rho$. By
  Proposition~\ref{prop:measure-nuclear-operator-in-cone} it is
  contained in the weak operator closure of the cone spanned by
  $\pi(\Gamma)$. For $\phi,\psi\in\Lip(\bG)$ we have 
  \begin{equation}
    \begin{split}
      \Big\lvert \langle T_\rho \phi, \psi \rangle & - \langle
      P_E\phi,\psi\rangle \Big\rvert  \leq \\
      & \leq \bigg\lvert\int_E  \frac{\phi(\xi)}{\mu(B_\bG(\xi,\rho))}
      \int_{B_\bG(\xi,\rho)} \!\!\!\Big(
      \conj{\psi(\eta)}-\conj{\psi(\xi)}\Big)\,d\mu(\eta)d\mu(\xi)\bigg\rvert
      \leq \\
      & \leq \mu(E)\norm{\phi}_\infty \lambda(\psi)\rho
      \xrightarrow[\rho\to 0]{} 0,
  \end{split}
  \end{equation}
  so to finish the proof it only remains to show that the family of operators $T_\rho$ is
  uniformly bounded. We may estimate their operator norms using the
  Schwarz inequality, obtaining
  \begin{equation}
    \norm{T_\rho} \leq \norm{ \int_\bG
      K_\rho(\xi,\eta)\,d\mu(\xi)}_\infty^{1/2} \norm{ \int_\bG K_\rho(\xi,\eta)\,d\mu(\eta)}_\infty^{1/2}.
  \end{equation}
  By Ahlfors regularity we get
  \begin{equation}
    \int_\bG K_\rho(\xi,\eta)\,d\mu(\xi) = 
    \int_E \frac{\chi_{B_\bG(\xi,\rho)}(\eta)}{\mu(B_\bG(\xi,\rho))}\,d\mu(\xi)
    \asymp 
    \int_E \frac{\chi_{B_\bG(\eta,\rho)}(\xi)}{\mu(B_\bG(\eta,\rho))}\,d\mu(\xi)
    \leq 1,
  \end{equation}
  while the second integral is simply equal to $\chi_E(\xi)\leq 1$.
\end{proof}

\section{The boundary representations}
\label{sec:properties}
In this section we show that the boundary representations are
irreducible and weakly contained in the regular representation.

\subsection{Irreducibility}
\label{sec:irreducibility}

We will prove irreducibility by using the following standard observation.
\begin{lemma} 
  \label{lem:irreducibility-criterion}
  Let $\sigma$ be a unitary representation of a group $G$ on a Hilbert
  space $\mathcal{H}$. If there exists a cyclic vector
  $\phi\in\mathcal{H}$ such that the orthogonal projection $P_\phi$
  onto the subspace $\CC\phi$ is contained in the von Neumann algebra
  generated by $\sigma(G)\subseteq \mathcal{B}(\mathcal{H})$, then the
  representation $\sigma$ is irreducible.
\end{lemma}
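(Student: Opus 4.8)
The plan is to verify that the commutant of $\sigma(G)$ reduces to $\CC\id$, which by the double commutant theorem is equivalent to irreducibility. So suppose $T$ is a bounded operator commuting with every $\sigma(g)$; I want to show $T$ is a scalar multiple of the identity. The key is to exploit the two hypotheses in tandem: that $P_\phi$ lies in the von Neumann algebra $\mathcal{M}$ generated by $\sigma(G)$, and that $\phi$ is cyclic.

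First I would use the fact that $T$ belongs to the commutant $\mathcal{M}'$, while $P_\phi\in\mathcal{M}$, so $T$ commutes with $P_\phi$. Since $P_\phi$ is the rank-one projection onto $\CC\phi$, the relation $TP_\phi = P_\phi T$ forces $\phi$ to be an eigenvector of $T$: indeed $T\phi = TP_\phi\phi = P_\phi T\phi \in \CC\phi$, so $T\phi = c\phi$ for some scalar $c\in\CC$. Next I would propagate this eigenvalue across the whole orbit: for any $g\in G$, $T\sigma(g)\phi = \sigma(g)T\phi = c\,\sigma(g)\phi$, so $T$ acts as multiplication by $c$ on every vector of the form $\sigma(g)\phi$, hence on the linear span of the orbit $\sigma(G)\phi$. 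Finally, cyclicity of $\phi$ means exactly that this span is dense in $\mathcal{H}$, and since $T - c\,\id$ is bounded and vanishes on a dense subspace, it vanishes identically, i.e.\ $T = c\,\id$.

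I do not expect any serious obstacle here; the argument is a short diagram chase once one recalls the double commutant theorem and the characterization of a rank-one projection. The only point deserving care is the direction of the inclusion: one needs $P_\phi\in\mathcal{M}$ (not merely in $\mathcal{M}'$) so that $P_\phi$ commutes with the arbitrary element $T$ of the commutant — this is why the hypothesis is phrased as membership in the von Neumann algebra generated by $\sigma(G)$. Everything else is formal.
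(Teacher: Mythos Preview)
Your argument is correct. The paper's proof is a minor variant: instead of working in the commutant, it takes a nonzero closed invariant subspace $\mathcal{H}_0$, uses cyclicity of $\phi$ to find $\psi\in\mathcal{H}_0$ with $\langle\phi,\psi\rangle\neq 0$, and then observes that $P_\phi\psi$ is a nonzero multiple of $\phi$ lying in $\mathcal{H}_0$ (since $P_\phi$, being in the von Neumann algebra, commutes with the projection onto $\mathcal{H}_0$), whence $\mathcal{H}_0=\mathcal{H}$. The two arguments are dual formulations of the same idea---yours via Schur's lemma and the commutant, the paper's via invariant subspaces directly---and both rest on exactly the same two hypotheses used in the same way; neither offers a real advantage over the other.
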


\begin{proof}
  Let $\mathcal{H}_0 \leq \mathcal{H}$ be a closed nonzero invariant
  subspace. We may take $\psi\in\mathcal{H}_0$ with
  $\langle\phi,\psi\rangle \ne 0$, for otherwise
  \begin{equation}
    \langle \sigma(g)\phi,\psi\rangle =
    \langle\phi,\sigma(g^{-1})\psi\rangle = 0
  \end{equation}
  for all $\psi\in\mathcal{H}_0$ and $g\in G$, which by cyclicity of
  $\phi$  yields $\mathcal{H}_0=0$. Since $P_\phi$ is in the von
  Neumann algebra generated by $\sigma(G)$, the nonzero vector $P_\phi\psi=\lambda\phi$
  belongs to  $\mathcal{H}_0$. Hence, $\mathcal{H}_0$ contains a
  cyclic vector of $\sigma$ and equals $\mathcal{H}$.
\end{proof}

Proving the irreducibility of the boundary representations is now a mere formality.

\begin{theorem}
  \label{thm:irreducibility}
  For any metric $d\in\mathcal{D}(\Gamma)$ the associated boundary
  representation is irreducible.
\end{theorem}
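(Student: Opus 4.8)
The plan is to apply Lemma~\ref{lem:irreducibility-criterion} with $G = \Gamma$, $\mathcal{H} = L^2(\bG,\mu)$, and the cyclic vector $\phi = \one$, the constant function. First I would check that $\one$ is cyclic: the closure of $\operatorname{span}\pi(\Gamma)\one$ is a $\Gamma$-invariant subspace containing all the functions $P_g^{1/2}$, and since $P_g^{1/2}(\xi) \asymp \omega^{(g,\xi)-\abs g/2}$ is, up to a bounded multiplicative factor, a bump concentrated near $\hat g$ at scale $e^{-\epsilon\abs g}$, suitable linear combinations of translates approximate characteristic functions of balls; by the Lebesgue differentiation argument already invoked before Lemma~\ref{lem:Lipschitz-eval-approx}, these span a dense subspace, so $\one$ is cyclic. (Alternatively one can simply note that the subspace is $\Gamma$-invariant and closed, hence by ergodicity — or by the forthcoming double ergodicity — must be all of $L^2$; but the explicit approximation is cleaner and self-contained.)

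Next, and this is the heart of the matter, I would produce the rank-one projection $P_{\one}$ onto $\CC\one$ inside the von Neumann algebra generated by $\pi(\Gamma)$. The key input is Proposition~\ref{prop:measure-nuclear-operator-in-cone}: every operator $T_K$ with kernel $0 \le K \in L^\infty(\bG^2)$ lies in the positive cone of $\pi$, in particular in the von Neumann algebra $\pi(\Gamma)''$ (the weak operator closure of the cone spanned by $\pi(\Gamma)$ is contained in $\pi(\Gamma)''$, since the latter is weakly closed and contains $\pi(\Gamma)$). Now take $K(\xi,\eta) = 1$, i.e.\ $K = \one \otimes \one$; since $\mu(\bG) = 1$, the operator $T_K$ is exactly $\langle T_K\phi,\psi\rangle = \big(\int\phi\,d\mu\big)\overline{\big(\int\psi\,d\mu\big)} = \langle\phi,\one\rangle\langle\one,\psi\rangle$, which is precisely the orthogonal projection $P_{\one}$ onto $\CC\one$. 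Hence $P_{\one} \in \pi(\Gamma)''$.

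With a cyclic vector $\one$ and $P_{\one} \in \pi(\Gamma)''$, Lemma~\ref{lem:irreducibility-criterion} immediately gives irreducibility, completing the proof.

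The main obstacle is entirely bundled into Proposition~\ref{prop:measure-nuclear-operator-in-cone}, which has already been proved; granting it, the argument here is the "mere formality" the text promises. The only points requiring a line of care are (i) the cyclicity of $\one$ — making sure the density argument via bumps $P_g^{1/2}$ is legitimate, which it is because their renormalizations $\tP_g$ approximate, in the evaluation sense of Lemma~\ref{lem:Lipschitz-eval-approx}, point masses at $\hat g$, and $\hat g$ ranges over a dense subset of $\bG$; and (ii) the bookkeeping identification of $T_{\one\otimes\one}$ with $P_{\one}$, which uses only $\mu(\bG) = 1$. Neither is deep.
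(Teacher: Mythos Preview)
Your identification of the rank-one projection $P_{\one}$ as the kernel operator $T_K$ with $K\equiv 1$, and your invocation of Lemma~\ref{lem:irreducibility-criterion}, match the paper exactly. The gap is in your cyclicity argument. The claim that ``suitable linear combinations of the bumps $P_g^{1/2}$ approximate characteristic functions of balls'' in $L^2$ is plausible but not established anywhere in the paper, and Lemma~\ref{lem:Lipschitz-eval-approx} gives only weak-type information (matrix coefficients against Lipschitz test functions), not $L^2$-approximation. Your parenthetical alternative is worse: saying that a closed $\Gamma$-invariant subspace ``by ergodicity must be all of $L^2$'' is exactly the statement of irreducibility, so this is circular. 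Ergodicity only rules out invariant subspaces of the form $L^2(E,\mu)$, not arbitrary ones.

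The paper closes this gap with no extra work, by re-using Proposition~\ref{prop:measure-nuclear-operator-in-cone} a second time. For any positive $\phi\in L^\infty(\bG)$ take the kernel $K(\xi,\eta)=\phi(\eta)$; the resulting operator is $T_K\psi=\langle\psi,\one\rangle\phi$. Since $T_K$ lies in the positive cone of $\pi$, the vector $T_K\one=\phi$ lies in the weak closure of the cone spanned by $\{\pi(g)\one\}$, hence in the closed linear span of $\pi(\Gamma)\one$. As positive $L^\infty$ functions are dense in $L^2(\bG,\mu)$, this gives cyclicity of $\one$ directly. So the correct move is not to build an ad hoc approximation from the $P_g^{1/2}$, but to realize that the same proposition which produced $P_{\one}$ already produces every rank-one operator $\psi\mapsto\langle\psi,\one\rangle\phi$, and applying these to $\one$ is what yields cyclicity.
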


\begin{proof}
  For a positive function $\phi\in L^\infty(\bG)$ the kernel
  $(\xi,\eta)\mapsto \phi(\eta)$ yields a one-dimensional operator $T$
  given by $T\psi = \langle \psi, \one\rangle \phi$. By
  Proposition~\ref{prop:measure-nuclear-operator-in-cone} it is
  contained in the von Neumann algebra of $\pi$, and $T\one = \phi$ is
  contained in the weakly closed span of $\pi(\Gamma)\one$. But
  $L^\infty(\bG)$ is dense in $L^2(\bG)$, and weakly closed subspaces
  are closed, so $\one$ is a cyclic vector of $\pi$. For $\phi=\one$
  the operator $T$ is the orthogonal projection onto $\CC\one$, so by
  Lemma~\ref{lem:irreducibility-criterion} we are done.
\end{proof}

\subsection{Weak containment in the regular representation}
\label{sec:weak-cont-regul}

An important property of any unitary representation is its weak
containment in the regular representation. In case of quasi-regular
representations we may apply the criterion of \cite{Kuhn1994}, which
states that if the action of a discrete group $G$ on a standard Borel
space $(X,\nu)$ is amenable, then all representations obtained by
twisting the quasi-regular representation~\eqref{eq:def-boundary-rep}
of $G$ on $L^2(X,\nu)$ with a cocycle---in our case trivial---are
weakly contained in the regular representation.

Amenability of the action on the Gromov boundary was established in
\cite[Theorem 5.1]{Adams1994}. Namely, for any finite Borel measure
$\mu$ on $\bG$, quasi-invariant under the action of $\Gamma$, the
action of $\Gamma$ on $(\bG,\mu)$ is amenable. We thus obtain the
following.
\begin{proposition}
  \label{prop:weak-containment}
  The boundary representations of $\Gamma$ are weakly contained in the
  regular representation.
\end{proposition}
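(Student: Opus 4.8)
The plan is to invoke the criterion of K\"uhn from \cite{Kuhn1994} together with the amenability of the boundary action established by Adams in \cite{Adams1994}. First I would recall the setup of \cite{Kuhn1994}: if a countable discrete group $G$ acts amenably (in the sense of Zimmer) on a standard Borel space $(X,\nu)$ with a quasi-invariant $\sigma$-finite measure, then the quasi-regular representation on $L^2(X,\nu)$ given by $[\sigma(g)\phi](x) = \left(\frac{dg_*\nu}{d\nu}(x)\right)^{1/2}\phi(g^{-1}x)$, and more generally any twist of it by a unitary cocycle, is weakly contained in the regular representation $\lambda_G$. The boundary representation $\pi$ defined in \eqref{eq:def-boundary-rep} is precisely a representation of this form, with $X = \bG$ and $\nu = \mu$ the (normalized) Patterson--Sullivan measure.

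Next I would verify that the hypotheses of the criterion are met in our situation. The boundary $\bG$ is a compact metrizable space, hence $(\bG,\mu)$ is a standard Borel space with a finite (in particular $\sigma$-finite) measure, and $\Gamma$ acts on it by homeomorphisms preserving the measure class of $\mu$, since $\mu$ is $\Gamma$-quasi-conformal by Theorem~\ref{thm:bhm}. The only nontrivial input is the amenability of the action $\Gamma \curvearrowright (\bG,\mu)$, which is exactly \cite[Theorem 5.1]{Adams1994}: for any finite Borel measure on $\bG$ quasi-invariant under $\Gamma$, the resulting action is amenable in Zimmer's sense. Applying the K\"uhn criterion then yields immediately that $\pi$ is weakly contained in the regular representation of $\Gamma$.

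There is no serious obstacle here: the statement is essentially a citation, and the main point is simply to check that the boundary representation falls within the scope of both cited results. The one item worth a sentence of care is that \cite{Kuhn1994} may be phrased for the untwisted quasi-regular representation or for cocycle twists by measurable fields of representations; since our normalization factor $P_g^{1/2}$ is exactly the one built into the quasi-regular representation and there is no additional twist, we are in the most basic case covered. I would end by noting that this proposition also complements Theorem~\ref{thm:irreducibility}: the boundary representations, while irreducible, are weakly contained in $\lambda_\Gamma$, so for non-amenable $\Gamma$ they are not square-integrable and in particular are infinite-dimensional, consistent with $\bG$ having no isolated points.
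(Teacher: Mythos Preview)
Your proof is correct and follows exactly the same route as the paper: invoke K\"uhn's criterion \cite{Kuhn1994} for quasi-regular representations of amenable actions, and supply amenability of the action of $\Gamma$ on $(\bG,\mu)$ via Adams \cite[Theorem 5.1]{Adams1994}. The only quibble is your closing remark: weak containment in $\lambda_\Gamma$ together with irreducibility does not preclude square-integrability (discrete series representations are both), so that final sentence should be dropped or rephrased.
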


\section{Classification}
\label{sec:classification}
Here, we investigate unitary equivalence between representations
arising from different metrics on $\Gamma$. We show that the
corresponding boundary representations are equivalent if and only if
the metrics are roughly similar.

For the entire section, let $d'\in\mathcal{D}(\Gamma)$ be another
metric with associated boundary representation $\pi'$. Prime will be
used to indicate objects associated to $d'$, analogous to the ones
defined for $d$.

\subsection{Equivalence in terms of measurable structures}
\label{sec:equiv-terms-meas}

By an \emph{isomorphism} of measure spaces $(X,\mu_X)$ and $(Y,\mu_Y)$
we will understand a Lebesgue isomorphism, i.e.\ a Borel map $F\colon X \to Y$, for which there
exist two subsets $N\subseteq X$ and $N'\subseteq Y$ of measure $0$,
such that the restriction $F\colon X\setminus N \to Y\setminus N'$ is
a Borel isomorphism, and the push-forward $F_*\mu_X$ is equivalent to
$\mu_Y$. Such an isomorphism will be called equivariant if the
corresponding equivariance condition is satisfied almost everywhere.

\begin{lemma}
  \label{lem:equivalent-reps-isomorphic-boundaries}
  Suppose that the representations $\pi$ and $\pi'$ are
  unitarily equivalent. Then there exists a $\Gamma$-equivariant
  isomorphism $F\colon (\bd\Gamma,\mu)\to(\bd\Gamma,\mu')$.
\end{lemma}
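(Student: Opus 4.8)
The plan is to turn a unitary isomorphism $U\colon L^2(\bG,\mu)\to L^2(\bG,\mu')$ intertwining $\pi$ and $\pi'$ into a Borel isomorphism of the boundary. The key mechanism is the family of projections $\{P_E : E\subseteq\bG\text{ measurable}\}$: by Lemma~\ref{lem:projections-in-cone}, each $P_E$ lies in the positive cone of $\pi$, hence in the von Neumann algebra $\mathcal{M}$ generated by $\pi(\Gamma)$; likewise each $P'_{E'}$ lies in $\mathcal{M}'$. Since $U\mathcal{M}U^* = \mathcal{M}'$, and since $\{P_E\}$ generates the (maximal abelian) von Neumann subalgebra $L^\infty(\bG,\mu)\subseteq\mathcal{M}$ acting by multiplication, the first step is to argue that conjugation by $U$ carries $L^\infty(\bG,\mu)$ onto $L^\infty(\bG,\mu')$. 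The cleanest way is: $L^\infty(\bG,\mu)$ is a MASA in $\mathcal{B}(L^2(\bG,\mu))$, so its image $UL^\infty(\bG,\mu)U^*$ is a MASA in $\mathcal{B}(L^2(\bG,\mu'))$ consisting of operators in $\mathcal{M}'$; but $L^\infty(\bG,\mu')$ is also such a MASA, and one shows the image must coincide with it because $U$ also intertwines the scalars $\pi(g)$-action structure... Actually the more robust route is to directly track what $U$ does to the projections $P_E$.

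First I would show that for every measurable $E\subseteq\bG$, the projection $UP_EU^*$ is again a projection of the form $P'_{E'}$ for some measurable $E'\subseteq\bG$, uniquely determined up to $\mu'$-null sets. Given this, the assignment $E\mapsto E'$ is a Boolean-algebra isomorphism between the measure algebras of $(\bG,\mu)$ and $(\bG,\mu')$ that respects countable unions (because $U$ is continuous and projections onto an increasing union converge strongly). By the standard measure-algebra representation theorem (e.g.\ von Neumann's theorem, as both spaces are standard Borel probability spaces with no atoms by Ahlfors regularity), any such isomorphism of measure algebras is induced by a Borel isomorphism $F\colon(\bG,\mu)\to(\bG,\mu')$ modulo null sets, with $F_*\mu\sim\mu'$. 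The equivariance of $F$ then follows from that of $U$: since $U\pi(g)=\pi'(g)U$, conjugating the relation $\pi(g)P_E\pi(g)^{-1}=P_{gE}$ (which holds because $\pi(g)$ is, up to a positive density factor, the Koopman operator of $\xi\mapsto g\xi$) gives $P'_{F(gE)}=P'_{gF(E)}$, so $F(gE)=gF(E)$ for all $E$, hence $F(g\xi)=gF(\xi)$ a.e.

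The heart of the argument — and the step I expect to be the main obstacle — is proving that $UP_EU^*$ is a \emph{multiplication} projection, i.e.\ that conjugation by $U$ sends $L^\infty(\bG,\mu)$ into $L^\infty(\bG,\mu')$. The subtlety is that a priori $UP_EU^*$ is only known to lie in $\mathcal{M}'$, not in the abelian subalgebra. The resolution should exploit that $L^\infty(\bG,\mu)$ is \emph{maximal abelian} in $\mathcal{B}(L^2(\bG,\mu))$: its image $UL^\infty(\bG,\mu)U^*$ is a MASA in $\mathcal{B}(L^2(\bG,\mu'))$. One then wants to identify it with $L^\infty(\bG,\mu')$. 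For this I would use that $\pi$ acts on $L^\infty(\bG,\mu)$ by the geometric automorphisms $\alpha_g(\phi)(\xi)=\phi(g^{-1}\xi)$ (conjugation by $\pi(g)$ preserves $L^\infty$ because the Radon–Nikodym factor $P_g^{1/2}$ is itself in $L^\infty(\bG,\mu)$, being a measurable function), so $UL^\infty(\bG,\mu)U^*$ is a MASA normalized by $\pi'(\Gamma)$; combining with the analogous property of $L^\infty(\bG,\mu')$ and uniqueness of the Cartan-type structure — or, more elementarily, by directly checking that both MASAs consist exactly of those operators in $\mathcal{M}'$ commuting with $L^\infty(\bG,\mu')$ plus an ergodicity/density argument using that $\one$ is cyclic for both representations — one forces equality. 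This should be packaged as a short lemma: any unitary intertwiner $U$ satisfies $UL^\infty(\bG,\mu)U^*=L^\infty(\bG,\mu')$, after which the measure-algebra argument above runs smoothly.
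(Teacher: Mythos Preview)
Your overall architecture matches the paper's: conjugation by the intertwiner should carry $L^\infty(\bG,\mu)$ onto $L^\infty(\bG,\mu')$, and then a standard measure-algebra argument produces the point map $F$ with the required equivariance. The gap is exactly where you flag it --- showing that $UP_EU^*$ is again a multiplication projection --- and your proposed resolutions do not close it.

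First, note that by Theorem~\ref{thm:irreducibility} the representation $\pi$ is irreducible, so the von Neumann algebra $\mathcal{M}$ it generates is all of $\mathcal{B}(L^2(\bG,\mu))$. Thus ``$UP_EU^*\in\mathcal{M}'$'' carries no information, and the suggestion of picking out $L^\infty$ as ``operators in $\mathcal{M}'$ commuting with $L^\infty(\bG,\mu')$'' is circular. The MASA/Cartan route is not hopeless, but it is far from automatic: $UL^\infty(\bG,\mu)U^*$ and $L^\infty(\bG,\mu')$ are both $\pi'(\Gamma)$-normalized MASAs in $\mathcal{B}(L^2(\bG,\mu'))$, yet uniqueness of a normalized MASA is a genuinely nontrivial structural statement that you would have to prove, not quote.

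The paper sidesteps all of this with a single observation you have the ingredients for but do not assemble: characterize the projections $P_E$ \emph{intrinsically in terms of the positive cone}. Lemma~\ref{lem:projections-in-cone} says $P_E$ and $I-P_E=P_{E^c}$ both lie in the positive cone of $\pi$. Conversely, if a projection $P$ has both $P$ and $I-P$ in the positive cone, then both preserve the pointwise order on functions; decomposing $\one=P\one+(I-P)\one$ into two orthogonal nonnegative functions forces $P\one=\chi_E$ for some $E$, and then order-preservation gives $P\phi\le\|\phi\|_\infty\chi_E$ for $\phi\ge0$, whence $P=P_E$. Since conjugation by $U$ takes the positive cone of $\pi$ onto that of $\pi'$, this characterization transports immediately, giving $UP_EU^*=P'_{E'}$. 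That is the missing key; once you have it, your measure-algebra and equivariance arguments go through exactly as written.
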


\begin{proof}
  Suppose $T\colon L^2(\bd\Gamma,\mu)\to
  L^2(\bd\Gamma,\mu')$ is a unitary intertwining operator. It
  induces a $\Gamma$-equivariant isomorphism 
  \begin{equation}
    \hat{T}\colon
    \mathcal{B}(L^2(\bd\Gamma,\mu')) \to
    \mathcal{B}(L^2(\bd\Gamma,\mu)),
  \end{equation}
  of von Neumann algebras endowed with the conjugation actions of the
  corresponding representations, given by $\hat{T}(S)=T^*ST$. The
  isomorphism $\hat T$ maps the positive cone of $\pi'$ onto the
  positive cone of $\pi$, and preserves orthogonal projections.

  Now, observe that the subalgebra
  $L^\infty(\bd\Gamma,\mu')\leq\mathcal{B}(L^2(\bd\Gamma,\mu'))$ of
  multiplication operators is generated by the orthogonal projections
  $P_E$ onto $L^2(E,\mu')$. They can be characterized as the
  orthogonal projections $P$ such that both $P$ and $I-P$ are in the
  positive cone of $\pi'$. Indeed, one inclusion is a consequence of
  Corollary~\ref{lem:projections-in-cone}. For the other one observe that
  if both $P$ and $I-P$ are in the positive cone, then they preserve
  the natural partial order on functions in
  $L^2(\bd\Gamma,\mu')$. Since the only possible decompositions
  $\one = P\one + (I-P)\one$ into a sum of two orthogonal positive
  functions are of the form $\one = \chi_E+\chi_{E^c}$, for bounded
  positive $\phi$ we get
  \begin{equation}
    P\phi \leq P(\norm{\phi}_\infty\one) \leq \norm{\phi}_\infty \chi_E
  \end{equation}
  for some fixed $E\subseteq \bG$, so $P$ sends $L^2(\bG,\mu')$ into
  $L^2(E,\mu')$. Similarly, the image of $I-P$ is contained in
  $L^2(E^c,\mu')$, so $P=P_E$.

  It follows that $\hat{T}$ restricts to a $\Gamma$-equivariant
  isomorphism between the algebras $L^\infty(\bd\Gamma,\mu')$ and
  $L^\infty(\bd\Gamma,\mu)$. By \cite[Theorem 4, p.\ 238]{Dixmier1981},
  any such isomorphism is induced by an isomorphism
  $F\colon(\bG,\mu)\to(\bG,\mu')$, which is uniquely determined up to
  perturbations on sets of measure $0$. Thus, $\Gamma$-equivariance of
  $\hat{T}$ implies that $F$ is also $\Gamma$-equivariant.
\end{proof}

\subsection{Equivalence in terms of metric structures}
\label{sec:equiv-terms-metr}

We will begin by recalling the following fact, which is standard at least in the context of groups acting on hyperbolic manifolds.

\begin{lemma}
  \label{lem:almost-invariant-measure-on-square}
  The measure class of $\mu^2$ contains a $\Gamma$-invariant
  measure $\nu$ on $\bG^2$ satisfying
  \begin{equation}
    \label{eq:inv-measure-def}
    \nu(E)\asymp \int_E d_\epsilon(\xi,\eta)^{-2D}\,d\mu^2(\xi,\eta).
  \end{equation}
  If the action of \(\Gamma\) is doubly ergodic, then \(\nu\) is unique up to scaling.
\end{lemma}

\begin{proof}
  Let $\tilde\nu = d_\epsilon(\xi,\eta)^{-2D}d\mu^2(\xi,\eta)$ be the
  measure on the right-hand side of \eqref{eq:inv-measure-def}. We
  have $\mu^2$-a.e.
  \begin{equation}
    \begin{split}
      \log_\omega\bigg(\frac{dg_*\tilde\nu}{d\tilde\nu}(\xi,&\eta)\bigg) = \log_\omega
      \bigg(\bigg(
      \frac{d_\epsilon(g^{-1}\xi,g^{-1}\eta)}{d_\epsilon(\xi,\eta)}
      \bigg)^{-2D}
      P_g(\xi)P_g(\eta)\bigg) \approx\\
      & \approx
      2(g^{-1}\xi,g^{-1}\eta)-2(\xi,\eta)+2(g,\xi)+2(g,\eta)-2\abs{g}\approx 0,
  \end{split}
  \end{equation}
  where the last estimate is obtained by expanding the definition of
  the Gromov product, after replacing $\xi$ and $\eta$ by sequences
  $x_n\to\xi$ and $y_n\to\eta$. It follows that the Radon-Nikodym
  derivatives $dg_*\tilde\nu/d\tilde\nu$ are uniformly bounded. It is
  a classical result that in such a situation $\tilde\nu$ can be
  replaced by an equivalent $\Gamma$-invariant measure
  $\nu=\rho\tilde\nu$. One just has to solve the equation
  \begin{equation}\label{eq:cohom-eq}
    \rho(\xi,\eta) = \frac{dg_*\tilde\nu}{d\tilde\nu}(\xi,\eta)\rho( g^{-1}\xi,g^{-1}\eta).
  \end{equation}
  The function
  \begin{equation}
    \rho= \sup_{g\in\Gamma}   \frac{ dg_*\tilde\nu}{d\tilde\nu}(\xi,\eta)
  \end{equation}
  is bounded away from $0$ and $\infty$, and can be seen to satisfy
  equation~\eqref{eq:cohom-eq} by applying supremum over $h$ to the
  cocycle identity
  \begin{equation}
    \frac{d(gh)_*\tilde\nu}{d\tilde\nu}(\xi,\eta) = 
    \frac{dg_*\tilde\nu}{d\tilde\nu}(\xi,\eta) 
    \frac{dh_*\tilde\nu}{d\tilde\nu}(g^{-1}\xi,g^{-1}\eta). 
  \end{equation}
  We thus obtain an invariant measure satisfying
  \eqref{eq:inv-measure-def}. 

  If the action of \(\Gamma\) is doubly ergodic, and $\nu'$ is another
  invariant measure on \(\bd\Gamma^{2}\), equivalent to \(\mu^{2}\),
  then $d\nu'/d\nu$ is a \(\Gamma\)-invariant function, and must be constant, so
  \(\nu'\) is proportional to \(\nu\).
\end{proof}

Recall that in a metric space $(X,d)$ the
cross-ratio of a quadruple of distinct points $x,y,z,w\in X$
is defined as
\begin{equation}
  [x,y,z,w] = \frac{d(x,z)d(y,w)}{d(x,w)d(y,z)}.
\end{equation}

The next lemma is an adaptation of a classical argument from ergodic
theory (see e.g.\ the proof of \cite[Theorem 6.2]{Furman2007}); we
include the detailed proof for the sake of self-containment.

\begin{lemma}
  \label{lem:doubly-ergodic-implies-quasi-mobius}
  If $\pi$ and $\pi'$ are unitarily equivalent, then we have
  \begin{equation}
    \label{eq:double-ergodic-implies-quasi-mobius-statement}
    d'_\epsilon(\xi,\eta)\asymp d_\epsilon(\xi,\eta)^{D/D'}.
  \end{equation}
\end{lemma}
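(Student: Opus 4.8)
The plan is to leverage the $\Gamma$-equivariant measure-space isomorphism $F\colon(\bG,\mu)\to(\bG,\mu')$ produced by Lemma~\ref{lem:equivalent-reps-isomorphic-boundaries}, together with double ergodicity of the Patterson--Sullivan measures (discussed in Section~\ref{sec:quasi-conf-meas}), to show that $F$ is compatible with the cross-ratios of the two visual metrics, and then convert this into the pointwise estimate~\eqref{eq:double-ergodic-implies-quasi-mobius-statement}. First I would consider, for a fixed small $\delta>0$, the function on $\bG^2$ given by $(\xi,\eta)\mapsto d'_\epsilon(F\xi,F\eta)/d_\epsilon(\xi,\eta)^{D/D'}$, or rather its logarithm. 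The point of the exponent $D/D'$ is that, by Theorem~\ref{thm:bhm} applied to both metrics, the Radon--Nikodym cocycles $\log P_g$ and $\log P'_g$ are, up to bounded error, $D\epsilon\log\omega$ and $D'\epsilon\log\omega'$ times the respective Busemann-type cocycles $2(g,\xi)-\abs{g}$; combining these with Lemma~\ref{lem:almost-invariant-measure-on-square} (for both $d$ and $d'$) shows that $d'_\epsilon(\xi,\eta)^{-2D'}\,d\mu'^2$ and $d_\epsilon(\xi,\eta)^{-2D}\,d\mu^2$ are both almost invariant on $\bG^2$, and $F_*$ carries the class of $\mu^2$ to that of $\mu'^2$ equivariantly.

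Next I would make the cross-ratio argument precise. Pulling the almost-invariant measure $\nu' = d'_\epsilon(\xi,\eta)^{-2D'}d\mu'^2$ back by $F\times F$ gives a measure on $\bG^2$ which is equivalent to $\nu = d_\epsilon(\xi,\eta)^{-2D}d\mu^2$ and, being the pullback of an almost invariant measure by an equivariant isomorphism, is itself almost invariant. Hence the Radon--Nikodym derivative $h(\xi,\eta) = d(F\times F)^*\nu'/d\nu$ satisfies $h(g\xi,g\eta)\asymp h(\xi,\eta)$ for all $g$, i.e.\ $\log h$ is an (almost) $\Gamma$-invariant function on $\bG^2$ in the coarse sense. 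To upgrade ``coarsely invariant'' to ``essentially constant'' one passes to the genuine cocycle: writing $c(g,(\xi,\eta)) = \log(dg_*\nu/d\nu)$ and similarly $c'$ for $\nu'$, the identity $c'(g,F\xi,F\eta) = c(g,\xi,\eta) + \log h(g\xi,g\eta) - \log h(\xi,\eta)$ together with $c\asymp 0$, $c'\asymp 0$ shows $\log h$ is a coboundary for the trivial-up-to-bounded cocycle; but the standard ergodic-theoretic device (as in \cite[Theorem 6.2]{Furman2007}, cited before the lemma) is to observe that a measurable function on $\bG^2$ whose ``coboundary'' under the diagonal $\Gamma$-action is uniformly bounded must, by double ergodicity of $\mu^2$, be essentially bounded and in fact constant up to bounded error --- one considers the essential range and uses that the diagonal action on $\bG^2$ is ergodic to conclude the level sets are null or conull. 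This yields $h\asymp 1$, i.e.
\begin{equation}
  d'_\epsilon(F\xi,F\eta)^{-2D'}\,\frac{d\mu'^2}{d\mu^2}(F\xi,F\eta) \asymp d_\epsilon(\xi,\eta)^{-2D}
\end{equation}
a.e.\ on $\bG^2$.

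Then I would eliminate $F$. The Radon--Nikodym factor $d\mu'^2/d\mu^2\circ(F\times F)$ is a product $k(\xi)k(\eta)$ where $k = d(F_*\mu)/d\mu' \circ F$ (using that $F_*\mu\asymp\mu'$ and both are Ahlfors regular, $k\asymp 1$ is automatic; in any case it splits as a product over the two coordinates), so the displayed estimate rearranges to $d'_\epsilon(F\xi,F\eta)\asymp d_\epsilon(\xi,\eta)^{D/D'}$ a.e. Finally, to replace $F$ by the identity: the same argument applied with $\pi = \pi'$, $d = d'$ shows that for the self-equivalence, and more to the point, one invokes the rigidity statement recalled at the end of Section~\ref{sec:gromov-boundary} --- the only $\Gamma$-equivariant homeomorphism of $\bG$ is the identity. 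Concretely, the estimate just obtained shows $F$ is a quasi-M\"obius, hence quasiconformal, hence (on a hyperbolic boundary with the $\Gamma$-action) has a continuous $\Gamma$-equivariant representative, which must therefore be the identity map; substituting $F = \id$ gives~\eqref{eq:double-ergodic-implies-quasi-mobius-statement}. The main obstacle I expect is the second step: carefully running the double-ergodicity argument to turn the coarse invariance of $\log h$ into the sharp bound $h\asymp1$ --- one has to be attentive that the ``cocycle is a bounded coboundary'' argument genuinely uses ergodicity of the diagonal action and not just of $\bG$, and that measurability issues (the essential range of $\log h$, null sets under $F\times F$) are handled correctly. The reduction $F = \id$ is the other delicate point, since a priori $F$ is only a measure isomorphism; here one leans on the quasi-M\"obius estimate to gain continuity before applying the equivariant-rigidity fact.
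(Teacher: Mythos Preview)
Your outline follows essentially the same route as the paper: obtain the equivariant measure isomorphism $F$ from Lemma~\ref{lem:equivalent-reps-isomorphic-boundaries}, compare the two almost-invariant measures $\nu$ and $\nu'$ on $\bG^2$ from Lemma~\ref{lem:almost-invariant-measure-on-square}, use double ergodicity to force the Radon--Nikodym derivative to be $\asymp 1$, extract a metric relation involving $F$, and then identify $F$ with $\id$ via the equivariant-rigidity fact from Section~\ref{sec:gromov-boundary}. The sup/inf trick the paper uses (taking $\sup_g$ and $\inf_g$ of $h\circ g^{-1}$ to obtain genuinely invariant, hence a.e.\ constant, bounds) is a cleaner implementation of your ``essential range'' sketch, but the idea is the same.

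There is, however, one genuine gap. You assert that $k = d(F_*\mu)/d\mu'\circ F \asymp 1$ is ``automatic'' from Ahlfors regularity of $\mu$ and $\mu'$. It is not: at this stage $F$ is only a Borel measure-space isomorphism, with no control over how it interacts with the metrics $d_\epsilon$, $d'_\epsilon$, so Ahlfors regularity tells you nothing about $F_*\mu$ relative to $d'_\epsilon$-balls. The paper does not attempt to bound $k$ directly. Instead, after arriving at
\[
  d'_\epsilon(\xi,\eta)^{-2D'} \asymp d_\epsilon(F^{-1}\xi,F^{-1}\eta)^{-2D}\,k(\xi)k(\eta)\qquad\text{a.e.,}
\]
it plugs this into the \emph{cross-ratio}, where the single-variable factors $k(\cdot)$ cancel identically, giving
\[
  [F\xi_1,F\xi_2,F\eta_1,F\eta_2]' \asymp [\xi_1,\xi_2,\eta_1,\eta_2]^{D/D'}
\]
on a full-measure subset of $\bG^4$. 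Fixing $(\xi_2,\eta_2)$ in a good Fubini section and restricting $(\xi_1,\eta_1)$ away from them converts this into the H\"older bound $d'_\epsilon(F\xi,F\eta)\lasymp d_\epsilon(\xi,\eta)^{D/D'}$ a.e.; a density argument then upgrades $F$ to a continuous equivariant map, hence the identity. Your parenthetical ``in any case it splits as a product'' is precisely the observation that makes the cross-ratio cancellation work --- you just have to use it rather than bypass it.
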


\begin{proof}
  It follows from
  Lemma~\ref{lem:equivalent-reps-isomorphic-boundaries} that there
  exists a $\Gamma$-equivariant isomorphism $F\colon
  (\bd\Gamma,\mu)\to(\bd\Gamma,\mu')$.  Let $\nu$ and $\nu'$ be the
  invariant measures on $\bd\Gamma^2$, corresponding to the metrics
  $d$ and $d'$, constructed in
  Lemma~\ref{lem:almost-invariant-measure-on-square}. Then the
  push-forward $F^2_*\nu$ is an invariant measure on $\bd\Gamma^2$,
  equivalent to $\mu'^2$, and thus proportional to $\nu'$ by double
  ergodicity of \(\mu\) (Theorem~\ref{prop:dbl-erg}). It follows that a.e.
  \begin{equation}
   d'_\epsilon(\xi,\eta)^{-2D'} \asymp 
    d_\epsilon(F^{-1}(\xi),F^{-1}(\eta))^{-2D}
    \frac{d F_*\mu}{d\mu'}(\xi) 
    \frac{dF_*\mu}{d\mu'}(\eta)
  \end{equation}
  After raising this to the power $-1/2D'$ and plugging into the
  definition of the cross-ratio, the Radon-Nikodym derivatives of
  $\mu$ cancel out, and we obtain that the estimate
  \begin{equation}
    \label{eq:F-cross-ratio-estimate}
    [F(\xi_1),F(\xi_2),F(\eta_1),F(\eta_2)]'\asymp [\xi_1,\xi_2,\eta_1,\eta_2]^{D/D'}
  \end{equation}
  holds on a subset  $E\subseteq\bG^4$ of full measure $1$.

  If we take $\rho>0$ and $(\xi_2,\eta_2)$ such that the corresponding
  section
  \begin{equation}
    \{(\xi_1,\eta_1) : (\xi_1,\xi_2,\eta_1,\eta_2) \in E\}
  \end{equation}
  of $E$ has measure $1$, we get
  \begin{equation}
    \label{eq:visual-metrics-equivalence-holder-ae}
    d'_\epsilon(F(\xi_1),F(\eta_1)) \lasymp_{\xi_2,\eta_2,\rho}
      d_\epsilon(\xi_1,\eta_1)^{D/D'} 
  \end{equation}
  for $(\xi_1,\eta_1)$ in a subset of full measure in
  $B_\bG(\eta_2,\rho)^c\times B_\bG(\xi_2,\rho)^c$. By the Fubini's
  theorem, $(\xi_2,\eta_2)$ can be chosen from a set of measure $1$,
  which is in particular dense. We may thus cover $\bG^2$ by finitely
  many sets of the form $B_\bG(\eta_2,\rho)^c\times
  B_\bG(\xi_2,\rho)^c$, and the
  estimate~(\ref{eq:visual-metrics-equivalence-holder-ae}) actually
  holds uniformly on a subset $E'\subseteq\bG^2$ of full measure. 

  Now, denote by $E''$ the set consisting of $\xi\in\bG$ such that the
  section $E'_\xi=\{\eta : (\xi,\eta)\in E'\}$ has full measure. For
  $\xi,\eta\in E''$ and $\zeta\in E'_\xi\cap E'_\eta$ we have
  \begin{equation}
    \begin{split}
      d'_\epsilon(F(\xi),F(\eta)) & \leq
      d'_\epsilon(F(\xi),F(\zeta))+d'_\epsilon(F(\eta),F(\zeta)) \\
      &\lasymp
      d_\epsilon(\xi,\zeta)^{D/D'}+d_\epsilon(\eta,\zeta)^{D/D'}.
    \end{split}
  \end{equation}
  If we let $\zeta$  converge to $\eta$ from within the dense set
  $E'_\xi\cap E'_\eta$, we get a H\"older estimate 
  \begin{equation}
    \label{eq:visual-metrics-equivalence-holder}
    d'_\epsilon(F(\xi),F(\eta)) \lasymp d_\epsilon(\xi,\eta)^{D/D'}
  \end{equation}
  for $F$, satisfied on the subset $E''\subseteq\bG$ of full
  measure. This implies that $F$ is equal a.e.\ to a continuous map
  $H: \bG\to\bG$. By symmetry, from $F^{-1}$ we may construct a
  continuous inverse of $H$, so it is a homeomorphism. Equivariance is
  clear, and by the remark in the last paragraph of Section \ref{sec:gromov-boundary}, $H$
  is in fact the identity
  map. Estimate~(\ref{eq:double-ergodic-implies-quasi-mobius-statement})
  follows from~(\ref{eq:visual-metrics-equivalence-holder}) and
  symmetry of $d_\epsilon$ and $d'_\epsilon$.
\end{proof}

Now we are ready to state and prove the equivalence result.

\begin{theorem}
  \label{thm:inequivalence}
  Let $d,d'\in\mathcal{D}(\Gamma)$ give rise to Patterson-Sullivan
  measures $\mu$ and $\mu'$, and boundary
  representations $\pi$ and $\pi'$.  The following conditions are
  equivalent 
  \begin{enumerate}
  \item $d$ and $d'$ are roughly similar,
  \item $\mu$ and $\mu'$ are equivalent,
  \item $\pi$ and $\pi'$ are unitarily equivalent.
  \end{enumerate}
\end{theorem}

\begin{proof}
  For the metrics $d$ and $d'$, being roughly similar means exactly
  that $d\approx Ad'$. If this is satisfied, the visual metrics
  $d_\epsilon$ and $d'_{A\epsilon}$ are bi-Lipschitz
  equivalent. Hence, the corresponding Hausdorff measures are
  equivalent, and the boundary representations are equivalent by the
  discussion in Section~\ref{sec:bound-repr}.

  For the implication from (3) to (1), we use
  Lemma~\ref{lem:doubly-ergodic-implies-quasi-mobius} to get the
  estimate \eqref{eq:double-ergodic-implies-quasi-mobius-statement} on
  the visual metrics, which implies that the Hausdorff measures $\mu$
  and $\mu'$ are equivalent with Radon-Nikodym derivatives bounded
  away from $0$ and $\infty$. Together with their
  $\Gamma$-quasi-conformality with respect to the corresponding
  metrics on $\Gamma$, this yields the estimate
  \begin{equation}
    \omega^{2(g,\xi)-\abs{g}} \asymp \omega'^{2(g,\xi)'-\abs{g}'}
  \end{equation}
  uniformly in $g$ and $\xi$. By taking the logarithms of both sides,
  and then suprema over $\xi$, using Lemma~\ref{lem:near-geodesic-ray}
  we obtain
  \begin{equation}
    \abs{g} \approx \frac{\log\omega'}{\log\omega}\abs{g}',
  \end{equation}
  which ends the proof.
\end{proof}

\begin{remark}
  \label{rem:pseudometrics}
  It might be tempting to try to extend the class of boundary representations
  even further, by allowing $d$ to be a pseudo-metric. For instance, if
  $\Gamma$ acts properly and cocompactly on a space $X$ then the orbit
  map in general induces a pseudo-metric on $\Gamma$. However,
  pseudo-metrics  do not lead to any new
  representations. In fact, any such pseudo-metric $d$ is roughly isometric
  to a metric 
  \begin{equation}
    d^+(g,h)=\begin{cases}
      d(g,h)+1 & \text{for $g\ne h$}\\
      0 & \text{for $g=h$}
    \end{cases}
  \end{equation}
  yielding the same boundary representation.
\end{remark}

\section{Examples}
\label{sec:applications}

In this section we apply the obtained results to some classes of
groups appearing in nature. It is less self-contained than the
preceding ones---for more details the reader is referred to the
appropriate literature.

\subsection{Fundamental groups of negatively curved manifolds}
\label{sec:cat-1-groups}

The following setting was studied by Bader and Muchnik, who
established irreducibility of boundary representations of fundamental
groups of negatively curved manifolds in \cite{Bader2011}. Let $M$ be
a closed Riemannian manifold with strictly negative curvature. Its
universal cover $\wt M$ is then a hyperbolic metric space on which
$\Gamma=\pi_1(M)$ acts freely and cocompactly by isometries. This
action thus extends to an action of $\Gamma$ on the Gromov boundary
$\bd\wt M$, and yields a unitary representation defined by
formula~\eqref{eq:def-boundary-rep}.  Any orbit map $\Gamma\to \wt M$
induces a hyperbolic metric $d\in\mathcal{D}(\Gamma)$, and since we
may identify $\bG$ with $\bd \wt M$, the resulting representation is
actually the boundary representation of $\Gamma$ associated to the
metric $d$.

The group $\Gamma$ may appear as the fundamental group of
many non-iso\-metric Riemannian manifolds, and any such realization
leads to a potentially different representation. The main theorems of
\cite{Bader2011} state that all these representations are irreducible,
and that they are equivalent if and only if the marked length spectra
of the corresponding manifolds are proportional. By the marked length
spectrum of $M$ we understand the function
$\ell\colon \pi_1(M)\to (0,\infty)$, which to every
$[\gamma]\in\pi_1(M)$ assigns the length of the unique geodesic loop
freely homotopic to $\gamma$.

The irreducibility of the Bader-Muchnik representations is a special
case of Theorem~\ref{thm:irreducibility}. To conclude the
equivalence condition of \cite{Bader2011} using
Theorem~\ref{thm:inequivalence}, it is enough to observe that
proportionality of the marked length spectra is equivalent to rough
similarity of the induced metrics on $\pi_1(M)$. In one direction it
is trivial---the length of the shortest geodesic loop in the free
homotopy class of $g\in\pi_1(M)$ is the translation length of $g$
acting on $\wt M$, and can be expressed in terms of the metric as
\begin{equation}
  \ell(g) = \lim_{n\to\infty} \frac{\lvert g^n\rvert}{n}.
\end{equation}
For the other direction we may resort to \cite[Theorem 2.2]{Otal1992},
which states that the marked length spectrum determines the
cross-ratio on the boundary. Hence, proportional marked length spectra
lead to cross-ratios satisfying $[\cdot]'=[\cdot]^\alpha$, and thus
they arise from roughly similar metrics, by the arguments from the
proofs of Lemma \ref{lem:doubly-ergodic-implies-quasi-mobius} and
Theorem~\ref{thm:inequivalence}.

\subsection{Green metrics and Poisson boundaries}
\label{sec:green-metr-poiss}

Let $\Gamma$ be a non-elementary hyperbolic group, and let $\nu$ be a
symmetric probability measure on $\Gamma$, whose support generates
$\Gamma$. Such a measure gives rise to a random walk on
$\Gamma$; denote by $F(g,h)$ the probability that starting at $g$, it
ever reaches $h$. Assume that $\nu$ has \emph{exponential moment},
i.e. there exists $\lambda > 0$ such that
\begin{equation}
\label{eq:exp-moment}
  \sum_{g\in\Gamma} e^{\lambda \abs{g}}\nu(g) < \infty,
\end{equation}
where the length is taken with respect to any word metric on $\Gamma$, 
and that for any $r$ there exists a constant $C(r)$ for which
\begin{equation}
\label{eq:F-geodesic}
  F(x,y) \leq C(r)F(x,v)F(v,y)
\end{equation}
for $v$ within distance $r$ from a geodesic (again, with respect to
some fixed word metric) joining $x$ and $y$. These two conditions hold
for any finitely supported measure \cite[Corollary 1.2]{Blachere2011}, and
ensure that the \emph{Green metric}
\begin{equation}
d_G(g,h)=-\log F(g,h),
\end{equation}
which was introduced in \cite{Blachere2006}, and studied further in
\cite{Blachere2008}, belongs to the class $\mathcal{D}(\Gamma)$.

Trajectories of the random walk with law $\nu$ almost surely converge
to a point in $\bG$, and the hitting probability defines the
\emph{harmonic measure} $\hat{\nu}$ on $\bG$ associated with $\nu$. By
\cite[Theorems 1.1(ii) and 1.5]{Blachere2011}, it turns out that
$\hat\nu$ is equivalent to the Patterson-Sullivan measure associated
with the Green metric $d_G$, and thus yields the same quasi-regular
representation. On the other hand, a measure with exponential moment
has finite first moment, and by \cite[Theorem 7.4]{Kaimanovich2000},
the Gromov boundary with the harmonic measure $(\bG,\hat{\nu})$ is
actually isomorphic to the Poisson boundary of $(\Gamma,\nu)$. By
Theorem~\ref{thm:irreducibility}, we therefore obtain the following
new result.
\begin{theorem}
\label{thm:harmonic-irreducibility}
Let $\nu$ be a symmetric measure on $\Gamma$, satisfying
conditions~\eqref{eq:exp-moment} and \eqref{eq:F-geodesic}. Then the
quasi-regular representation associated with the Poisson boundary
$(\bG,\hat\nu)$ is irreducible.
\end{theorem}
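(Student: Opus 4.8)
The plan is to reduce Theorem~\ref{thm:harmonic-irreducibility} to Theorem~\ref{thm:irreducibility} by identifying the quasi-regular representation associated with the Poisson boundary as a boundary representation in the sense of Section~\ref{sec:bound-repr}, attached to a suitable metric in $\mathcal{D}(\Gamma)$. The candidate metric is the Green metric $d_G(g,h) = -\log F(g,h)$ of the random walk $(\Gamma,\nu)$.

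First I would verify that $d_G \in \mathcal{D}(\Gamma)$. This is exactly where the hypotheses \eqref{eq:exp-moment} and \eqref{eq:F-geodesic} enter: condition~\eqref{eq:F-geodesic} gives the near-additivity of $-\log F$ along geodesics, while the exponential moment condition~\eqref{eq:exp-moment} bounds $d_G$ above by a multiple of a word metric, and left-invariance of $d_G$ follows from the fact that the random walk is itself left-invariant. Together with the known quasi-isometry to a word metric and hyperbolicity, which are the content of the cited results from \cite{Blachere2011} and \cite{Blachere2008}, these checks place $d_G$ in $\mathcal{D}(\Gamma)$ and hence fit it into the framework of this paper, with its associated Patterson-Sullivan class on $\bG$.

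Next I would identify the measure. By \cite[Theorems 1.1(ii) and 1.5]{Blachere2011}, the harmonic measure $\hat\nu$ is in the Patterson-Sullivan class of $d_G$, so by the discussion in Section~\ref{sec:bound-repr} the unitary isomorphism $T_{\mu\hat\nu}$ intertwines the boundary representation attached to the Patterson-Sullivan measure $\mu$ of $d_G$ with the quasi-regular representation on $L^2(\bG,\hat\nu)$ defined by formula~\eqref{eq:def-boundary-rep}. Finally, since $\nu$ has an exponential moment it has finite first moment, so \cite[Theorem 7.4]{Kaimanovich2000} identifies $(\bG,\hat\nu)$ with the Poisson boundary of $(\Gamma,\nu)$ as a measurable $\Gamma$-space; thus the quasi-regular representation associated with the Poisson boundary coincides (up to unitary equivalence) with the boundary representation of $\Gamma$ for the metric $d_G$. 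Theorem~\ref{thm:irreducibility} then yields irreducibility.

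The main obstacle is really a bookkeeping one: making sure that all the cited identifications are at the level of measure classes with Radon-Nikodym derivatives bounded away from $0$ and $\infty$, so that the abstract machinery of Section~\ref{sec:bound-repr} — which produces a well-defined representation from the Patterson-Sullivan \emph{class} — actually applies, and that the Poisson boundary identification is $\Gamma$-equivariant as a measure-class-preserving action. No genuinely new argument is needed beyond carefully citing \cite{Blachere2011,Blachere2008,Kaimanovich2000}; the substance of the theorem is entirely carried by Theorem~\ref{thm:irreducibility}.
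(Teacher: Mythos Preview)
Your proposal is correct and follows essentially the same route as the paper: the paper's argument (given in the paragraphs immediately preceding the theorem rather than in a separate proof environment) also verifies that $d_G\in\mathcal{D}(\Gamma)$ via \cite{Blachere2011}, identifies $\hat\nu$ with the Patterson-Sullivan class of $d_G$ using \cite[Theorems 1.1(ii) and 1.5]{Blachere2011}, invokes \cite[Theorem 7.4]{Kaimanovich2000} to identify $(\bG,\hat\nu)$ with the Poisson boundary, and then applies Theorem~\ref{thm:irreducibility}. Your additional remarks about the intertwining operator $T_{\mu\hat\nu}$ and the bookkeeping on measure classes just make explicit what the paper leaves implicit.
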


\appendix

\section{Double ergodicity of Patterson-Sullivan measures}
\label{sec:double-ergod-patt-1}

Although in many contexts double ergodicity of Patterson-Sullivan measures is known, there is no proof in the literature, which would apply in the general context of this paper. We have discussed this problem with Uri Bader and Alex Furman, who are planning to include a general proof of a stronger property, called \emph{isometric double ergodicity} in their forthcoming paper \cite{Bader2014}. In this appendix, we give a detailed proof of double ergodicity of Patterson-Sullivan measures, which is required for the classification of boundary representations. It is based on the ideas explained to us by Bader and Furman.

\subsection{Measurable geodesic flow}
\label{sec:meas-geod-flow}

Let \(\Gamma\) be a hyperbolic group endowed with a metric \(d\in\mathcal{D}(\Gamma)\), giving rise to the Patterson-Sullivan measure \(\mu\). For \(x=(\xi,\eta)\in\bd\Gamma^{2}\) we will write \(x_{-}=\xi\), and \(x_{+}=\eta\). Define for \(g\in\Gamma\), and \(x \in\bd\Gamma^{2}\)
\begin{equation}
  \label{eq:cocycle-def}
  c_{\pm}(g,x) = \log_{\omega} \frac{dg^{-1}_{*}\mu}{d\mu}(x_{\pm}) \approx 2(g^{-1},x_{\pm})-\abs{g},
\end{equation}
and put
\begin{equation}
  c(g,x) = \frac{c_{+}(g,x)-c_{-}(g,x)}{2} \approx (g^{-1},x_{+})-(g^{-1},x_{-}).
\end{equation}
Then \(c_{\pm}\), and therefore also \(c\), satisfy  the cocycle identity
\begin{equation}
  c_{*}(gh,x) = c_{*}(h,x) + c_{*}(g,hx)
\end{equation}
for all \(g,h\in\Gamma\) and \(x\in\Omega\), where \(\Omega\subseteq\bd\Gamma^{2}\setminus\{ (\xi,\eta) : \xi=\eta\}\) has full measure. Thanks to removing the diagonal, every \(x\in\Omega\) constitutes a pair of endpoints of some roughly geodesic ray.

\begin{lemma}\label{prop:cocycles-geodesics-estimates}
  The following estimates hold uniformly for \(x\in\Omega\),  \(r\in\RR\), and any rough geodesic \(\gamma\) from \(x_{-}\) to \(x_{+}\).
  \begin{enumerate}
  \item \(c_{+}(\gamma(r)^{-1},x)\approx r + c_{+}(\gamma(0)^{-1},x)\),
  \item \(c_{-}(\gamma(r)^{-1},x)\approx -r + c_{-}(\gamma(0)^{-1},x)\),
  \item \(c(\gamma(r)^{-1},x)\approx r + c(\gamma(0)^{-1},x)\).
  \end{enumerate}
\end{lemma}

\begin{proof}
  For (1) and (2) observe that
  \begin{equation}
    c_{\pm}(\gamma_{}(r)^{-1},x) \approx 2(\gamma_{}(r),x_{\pm})-\abs{\gamma_{}(r)} \approx \liminf_{s\to\pm \infty} (\abs{\gamma_{}(s)} - \abs{s-r}),
  \end{equation}
  and for fixed \(r\) the difference \(s-r\) is either eventually positive or eventually negative, depending on whether \(s\to\infty\) or \(s\to -\infty\), yielding
  \begin{equation}
    c_{+}(\gamma_{}(r)^{-1},x) \approx r + \liminf_{s\to\infty} (\abs{\gamma_{}(s)}-s) \approx r + c_{+}(\gamma(0)^{-1},x),
  \end{equation}
  and
  \begin{equation}
    c_{-}(\gamma_{}(r)^{-1},x) \approx -r + \liminf_{s\to-\infty} (\abs{\gamma_{}(s)}+s)\approx -r + c_{-}(\gamma(0)^{-1},x).
  \end{equation}
  Estimate (3) now follows immediately.
\end{proof}

Fix an invariant measure \(\nu\) on \(\bd\Gamma^{2}\) according to Lemma~\ref{lem:almost-invariant-measure-on-square}, and 
define an action of \(\Gamma\) on \(\Omega\times \RR\) by
\begin{equation}
  g(x,t) = (gx, t-c(g,x)).
\end{equation}
It commutes with the standard translation action of \(\RR\), which we will denote by \((x,t)+s \mapsto (x,t+s)\), and leaves invariant the product measure \(\nu\otimes dt\), where \(dt\) stands for the Lebesgue measure on \(\RR\).

\subsection{The fundamental domain}
\label{sec:fundamental-domain}

We will now construct a fundamental domain for the action of \(\Gamma\) on \(\Omega\times\RR\). By this we mean a strict fundamental domain, i.e.\ a Borel set \(\Delta\subseteq \Omega\times\RR\), whose \(\Gamma\)-translates are disjoint and cover \(\Omega\times\RR\).  First, for \(h,\theta>0\) define 
\begin{equation}
  D_{\theta,h} = \{ (x,t)\in\Omega\times(-h,h) : d_\epsilon(x_{-},x_{+}) >
  \theta\}.
\end{equation}
A subset of \(\Omega\times\RR\) will be called \emph{bounded}, if it is contained is some \(D_{\theta,h}\). Bounded sets have finite measure, since
\begin{equation}
  \nu\otimes dt (D_{\theta,h}) \asymp 2h \int_{d_{\epsilon}(x_{-},x_{+})>\theta} d_{\epsilon}(x_{-},x_{+})^{-2D} d\mu^{2}(x) \leq 2h\theta^{-2D}.
\end{equation}
Furthermore, the property of being bounded is invariant under the action of \(\Gamma\). Indeed, \(\Gamma\) acts on \(\bd\Gamma\) by bi-Lipschitz homeomorphisms, and \(\abs{c(g,x)}\aless \abs{g} \), so the image of \(D_{\theta,h}\) under any \(g\in\Gamma\) remains bounded.

\begin{lemma} \label{prop:translates-cover}
  There exist \(\theta,h>0\), such that the set \(D_{\theta,h}\) meets every \(\Gamma\)-orbit in \(\Omega\times\RR\).
\end{lemma}

\begin{proof}
  Let \((x,t)\in\Omega\times\RR\), and choose a rough geodesic \(\gamma\) from \(x_{-}\) to \(x_{+}\). Put \(r=t-c(\gamma(0)^{-1}, x)\), and \(g=\gamma(r)^{-1}\). Then by Lemma~\ref{prop:cocycles-geodesics-estimates} we have \(c(g,x)\approx t\), and so
  \begin{equation}
    g(x,t)=(gx,t-c(g,x)) \in \Omega\times(-h,h)
  \end{equation}
  with some \(h\) depending only on the constant in this uniform estimate. Moreover,
  \begin{equation}
    (gx_{-},gx_{+})\approx (x_{-},x_{+})_{g^{-1}}\approx 0,
  \end{equation}
  since \(g^{-1}=\gamma_{x}(r)\) lies on a rough geodesic between \(x_{-}\) and \(x_{+}\). Hence,
  \begin{equation}
    d_{\epsilon}(gx_{-},gx_{+}) \asymp e^{-\epsilon(gx_{-},gx_{+})}\asymp 1,
  \end{equation}
  so for some \(\theta>0\) depending on the estimate constants, we have \(g(x,t)\in D_{\theta,h}\).
\end{proof}

\begin{lemma}\label{prop:fund-domain-properness}
  For any \(\theta,h>0\) the set of \(g\in\Gamma\) such that \(gD_{\theta,h}\cap D_{\theta,h}\ne\emptyset\) is finite.
\end{lemma}

\begin{proof}
Fix \(\theta\) and \(h\), and pick \(g\in\Gamma\) such that \(gD_{\theta,h}\cap D_{\theta,h}\ne\emptyset\). This means that there exists \((x,t)\in D_{\theta,h}\) such that \(g(x,t)\in D_{\theta,h}\), and the following three estimates are satisfied:
\begin{equation}
    (g^{-1},x_{+}) \approx_{h} (g^{-1},x_{-}),\qquad (x_{-},x_{+}) \approx_{\theta} 0,\qquad
  (gx_{-},gx_{+}) \approx_{\theta} 0.
\end{equation}
Additionally, by \eqref{eq:gromov-product-inverse}, the first estimate implies that
\begin{equation}
  (g,gx_{+})\approx_{h} (g,gx_{-}).
\end{equation}
This yields
\begin{equation}
(g^{-1},x_{+})\approx_{h} \min\{ (g^{-1},x_{+}),(g^{-1},x_{-})\}\aless (x_{-},x_{+})\approx_{\theta} 0,
\end{equation}
and similarly
\begin{equation}
  (g,gx_{+}) \approx_{h} \min\{ (g,gx_{+}),(g,gx_{-})\}\aless (gx_{-},gx_{+}) \approx_{\theta} 0.
\end{equation}
By \eqref{eq:gromov-product-inverse} we thus have
\begin{equation}
  \abs{g}\approx (g^{-1},x_{+}) + (g,gx_{+}) \approx_{\theta,h}0,
\end{equation}
so the elements \(g\in \Gamma\) satisfying our conditions are all contained in a ball whose radius depends only on \(h\) and \(\theta\).
\end{proof}

\begin{lemma}\label{prop:strict-fund-domain}
  The action of \(\Gamma\) on \(\Omega\times\RR\) admits a bounded  Borel fundamental domain.
\end{lemma}

\begin{proof}
  This is a standard argument from descriptive set theory. We will use
  \cite[Theorem 12.16]{Kechris1995}, stating that any
  partition of a Polish space into closed subsets such that the
  saturation of any open set (i.e.\ the union of parts intersecting the set) is Borel, admits a Borel transversal.

  By Lemma~\ref{prop:translates-cover} we may choose \(\theta,h>0\) such that the \(\Gamma\)-translates of \(D_{\theta,h}\) cover \(\Omega\times\RR\). The set
  \begin{equation}
    X = \{ (x,t)\in\bd\Gamma^{2}\times(-h,h) : d_{\epsilon}(x_{-},x_{+})>\theta\}
  \end{equation}
  is clearly a Polish space, containing \(D_{\theta,h}\). Now,
  partition \(X\) into orbits of \(\Gamma\) in \(D_{\theta,h}\), and
  singletons in \(X\setminus D_{\theta,h}\). The parts of this
  partition are finite by Lemma~\ref{prop:fund-domain-properness}, and
  hence closed. Moreover, the saturation of any open set
  $U\subseteq X$ is
  \begin{equation}
    (U \setminus D_{\theta,h}) \cup \bigcup_{g\in \Gamma} (gU\cap
    D_{\theta,h}),
  \end{equation}
  and is a Borel set. Hence, the partition admits a Borel transversal, and its intersection with \(D_{\theta,h}\) is a fundamental domain for the action of \(\Gamma\) on \(\Omega\times \RR\). 
\end{proof}

Let \(\Delta\subseteq \Omega\times\RR\) be a bounded fundamental domain for the action of \(\Gamma\), whose existence is asserted by Lemma~\ref{prop:strict-fund-domain}. For any \(p\in\Omega\times\RR\) and \(t\in\RR\) there exists a unique element \(\kappa_{\Delta}(p,t)\in\Gamma\) such that
\begin{equation}
  p+t\in\kappa_{\Delta}(p,t)^{-1}\Delta,
\end{equation}
and the map \(\kappa_{\Delta}\colon \Omega\times\RR\times\RR \to \Gamma\) is clearly Borel.

Now, we may identify the quotient space \((\Omega\times\RR)/\Gamma\) with \(\Delta\), and since \(\Omega\times\RR\) is endowed with the translation action of \(\RR\), commuting with the action of \(\Gamma\), this action descends to a measure-preserving flow \(\Phi_{s}^{\Delta}\) on \(\Delta\), given by
\begin{equation}
  \Phi_{s}^{\Delta}(p) = \kappa_{\Delta}(p,s)(p+s).
\end{equation}
In particular, on \(\Delta\times\RR\) the cocycle identity
\begin{equation}
  \kappa(\Phi_{t}^{\Delta}(p),s)\kappa(p,t) = \kappa(p,s+t)
\end{equation}
holds.

\begin{lemma} \label{prop:cocycle-is-quasigeo}
  For any bounded fundamental domain \(\Delta\) of the action of \(\Gamma\) on \(\Omega\), and any \(q=(x,t)\in \Delta\), the mapping \(\RR_{+}\ni s\mapsto \kappa_{\Delta}(q,s)^{-1}\) is a quasi-geodesic ray in \(\Gamma\) with endpoint \(x_{+}\).
\end{lemma}

\begin{proof}
  Assume that \(\Delta\subseteq D_{\theta,h}\), and take \(s>0\). As a consequence of the cocycle identity for \(\kappa_{\Delta}\), we
  get
  \begin{equation}\label{eq:cocycle-quasigeo-dist-1}
    d( \kappa(q,t+s)^{-1}, \kappa(q,t)^{-1}) = \lvert{ \kappa(\Phi_{t}^{\Delta}(q),s)}\rvert,
  \end{equation}
  so we need to estimate the length of \(\kappa_{\Delta}(p,s)\) in terms of \(s\) for arbitrary \(p\in\Delta\).

  For \(s< h\) we have \(p+s \in D_{\theta,2h}\), and simultaneously
  \begin{equation}
     \kappa(p,s)(p+s) = \Phi_{s}^{\Delta}(p) \in \Delta \subseteq D_{\theta,2h},
  \end{equation}
  so  by Lemma \ref{prop:fund-domain-properness}, the possible values of \(\kappa(p,s)\) belong to some ball in \(\Gamma\), depending only on \(\theta\) and \(h\). This yields a uniform upper bound for \(\lvert\kappa_{\Delta}(p,s)\rvert\)
  valid for \(\abs{s}<h\). Therefore, 
  \begin{equation}
    d( \kappa(q,t+s)^{-1}, \kappa(q,t)^{-1}) \lasymp \lceil{s/h}\rceil\lasymp 1 + s
  \end{equation}
  for arbitrary \(s\).
  
  Now, put \(p=(x,t)\), and observe that
  \begin{equation}
    \Phi_{s}^{\Delta}(p)=\kappa_{\Delta}(p,s)(p+s) = (\kappa_{\Delta}(p,s)x, t+s-c(\kappa_{\Delta}(p,s),x))
  \end{equation}
  is in \(\Delta\subseteq D_{\theta,h}\), so
  \begin{equation}
    c(\kappa_{\Delta}(p,s),x)\approx_{h} s.
  \end{equation}
  On the other hand, we have
  \begin{multline}
    c(\kappa_{\Delta}(p,s),x)\approx (\kappa_{\Delta}(p,s)^{-1},x_{+}) - (\kappa_{\Delta}(p,s)^{-1},x_{-}) \leq \\
    \leq (\kappa_{\Delta}(p,s)^{-1},x_{+}) \leq \lvert \kappa_{\Delta}(p,s)^{-1} \rvert,
  \end{multline}
  yielding a lower bound in \eqref{eq:cocycle-quasigeo-dist-1}, and proving that indeed we are dealing with a quasi-geodesic ray. Also, we see above that \((\kappa_{\Delta}(p,s)^{-1},x_{+}) \agtr_{h} s\), so the endpoint of our ray is \(x_{+}\).
\end{proof}

\subsection{Lebesgue differentiation}
\label{sec:lebesg-diff}

Recall that by the Lebesgue Differentiation Theorem \cite[Theorem 1.8]{Heinonen2001}, for any positive \(f\in L^{1}(\bd\Gamma,\mu)\) we have
\begin{equation}
  \lim_{r\to 0} \frac{1}{\mu(B(\xi,r))} \int_{B(\xi,r)} f\,d\mu = f(\xi)
\end{equation}
a.e.\ on \(\bd\Gamma\). In this section we will introduce a variant of this theorem, where one integrates \(f\) against \(g_{*}\mu\) instead of the normalized restriction of \(\mu\) to a ball. First, recall that for \(f\in L^{1}(\bd\Gamma,\mu)\) the maximal function \(Mf\) is defined as
\begin{equation}
  Mf(\xi) = \sup_{B\ni\xi} \frac{1}{\mu(B)}\int_{B}\abs{f}\,d\mu,
\end{equation}
where the supremum is taken over all balls in \(\bd\Gamma\) containing \(\xi\). By \cite[Theorem 2.2]{Heinonen2001} it satisfies the weak \(L^{1}\) estimate
\begin{equation}
  \mu\{ \xi\in\bd\Gamma : Mf(\xi)>t\} \lasymp \frac{1}{t}\norm{f}_{1}.
\end{equation}
Let us now define  \(N_{\rho}f\) by
\begin{equation}
  N_{\rho}f(\xi) = \sup_{\Sigma(g,\rho)\ni \xi}  \int \abs{f}\,dg_{*}\mu. 
\end{equation}

\begin{lemma} \label{prop:weak-l1-for-maximal-fn}
  The functions \(N_{\rho}f\) and \(Mf\) satisfy the estimate
  \begin{equation}
    N_{\rho}f(\xi) \lasymp_{\rho} Mf(\xi)
  \end{equation}
  for \(\xi\in \bd\Gamma\). In particular, \(N_{\rho}f\) satisfies the weak \(L^{1}\) estimate
  \begin{equation}
    \mu\{ \xi\in\bd\Gamma : N_{\rho}f(\xi)>t\} \lasymp_{\rho} \frac{1}{t}\norm{f}_{1}.
\end{equation}
\end{lemma}

\begin{proof}
  Without loss of generality we may assume that \(f\) is positive. Let \(\eta\in\bd\Gamma\). It suffices to show that for \(g\in\Gamma\) such that \(\eta\in\Sigma(g,\rho)\) we have 
  \begin{equation}\label{eq:maximal-estimate-proof-1}
    \int f\,dg_{*}\mu \lasymp_{\rho} Mf(\eta)
  \end{equation}
  uniformly in \(g\).
  By \eqref{eq:omega-hat-g-xi} we have
  \begin{multline} \label{eq:rn-derivative-estimate-in-maximal-fn-proof}
    \frac{dg_{*}\mu}{d\mu}(\xi) \asymp \min\{
    \omega^{\abs{g}},\omega^{-\abs{g}}d_{\epsilon}(\hat{g},\xi)^{-2D}\} =
    \\ =
    \begin{cases} \omega^{\abs{g}}, & d_{\epsilon}(\hat{g},\xi) <
      e^{-\epsilon\abs{g}}\\
      \omega^{-\abs{g}}d_{\epsilon}(\hat{g},\xi)^{-2D}&\text{otherwise,}
    \end{cases}
  \end{multline}
  and for \(e^{-\epsilon\abs{g}} < d_{\epsilon}(\hat{g}, \xi) <  e^{-\epsilon(\abs{g} - \rho)}\) we have \(\omega^{-\abs{g}}d_{\epsilon}(\hat{g},\xi)^{-2D} \asymp_{\rho} \omega^{\abs{g}}\), so finally
  \begin{equation}
    \int f\,dg_{*}\mu \asymp_{\rho} \int f\phi_{g}\,d\mu,
  \end{equation}
  where
  \begin{equation}
    \phi_{g}(\xi) =
    \begin{cases}
      \omega^{\abs{g}}& \xi\in\Sigma(g,\rho),\\
      \omega^{-\abs{g}}d_{\epsilon}(\hat{g},\xi)^{-2D} & \xi\not\in \Sigma(g,\rho).
    \end{cases}
  \end{equation}
  The function \(\phi_{g}\) is radial with respect to \(\hat{g}\), constant on \(\Sigma(g,\rho)\), and non-increasing in \(d_{\epsilon}(\hat{g},\xi)\). It can be therefore approximated from above by simple functions of the form
  \begin{equation}
    \psi_{g} = \sum_{i=1}^{n} \alpha_{i}\chi_{B(\hat{g},r_{i})}
  \end{equation}
  with \(\norm{\psi_{g}-\phi_{g}}_{1}\) arbitrarily small, and all \(B(\hat{g},r_{i})\) containing \(\Sigma(g,\rho)\). Now,
  \begin{equation}
    \int f\phi_{g} \,d\mu \leq \int f\psi_{g}\,d\mu = \sum_{i=1}^{n} \alpha_{i} \mu(B(\hat{g},r_{i})) \frac{1}{\mu(B(\hat{g},r_{i}))} \int_{B(\hat{g},r_{i})} f\,d\mu,
  \end{equation}
  and since \(\eta\in\Sigma(g,\rho)\subseteq B(\hat{g},r_{i})\) for all \(i\),
  \begin{equation}
    \int f\phi_{g}\,d\mu \leq \sum_{i=1}^{n}\alpha_{i}\mu(B(\hat{g},r_{i})) Mf(\eta) = Mf(\eta) \norm{\psi_{g}}_{1}.
  \end{equation}
  Since \(\psi_{g}\) can be chosen arbitrarily close to \(\phi_{g}\) in \(L^{1}\)-norm, and \(\phi_{g}\asymp dg_{*}\mu/d\mu\),  in the last estimate there is a uniform bound on \(\norm{\psi_{g}}_{1}\), yielding \eqref{eq:maximal-estimate-proof-1}. 

  The second statement follows from the corresponding estimate on \(Mf\).
\end{proof}

\begin{lemma}\label{prop:weak-convergence-to-dirac}
  Let \(\xi\in\bd\Gamma\). In the weak* topology the push-forwards \(g_{*}\mu\) converge to the unit mass supported at \(\xi\) as \(g\to\xi\).
\end{lemma}

\begin{proof}
  Fix \(\xi\in\bd\Gamma\), and \( f\in C(\bd\Gamma)\). For any \(\alpha>0\) there exists \(\rho>0\) such that for \(\eta\in B=B_{\bd\Gamma}(\xi,\rho)\) we have \(\abs{f(\xi)-f(\eta)} < \alpha\), and
  \begin{equation}
    \begin{split}
      \abs{\int f\,dg_{*}\mu - f(\xi)} & \leq \int_{B}\abs{f - f(\xi)}\,dg_{*}\mu + \int_{B^{c}} \abs{f-f(\xi)}\,dg_{*}\mu \leq \\
      & \leq \alpha  + 2\norm{f}_{\infty} \int_{B^{c}} \frac{dg_{*}\mu}{d\mu}\,d\mu.
    \end{split}
  \end{equation}
  But if \(\abs{g}\) is sufficiently large, then by~\eqref{eq:rn-derivative-estimate-in-maximal-fn-proof}, on \(B^{c}\) we may write
  \begin{equation}
    \int_{B^{c}} \frac{dg_{*}\mu}{d\mu}(\xi)\,d\mu(\xi) \asymp \int_{B^{c}} \omega^{-\abs{g}} d_{\epsilon}(\hat{g},\xi)^{-2D}\,d\mu(\xi) \leq \omega^{-\abs{g}} \rho^{-2D},
  \end{equation}
  and this converges to \(0\) as \(\abs{g}\to\infty\).
\end{proof}

For a function \(f\colon \Gamma\to\CC\), and a point \(\xi\in\bd\Gamma\), we will say that a number \(z\in\CC\) is \emph{the radial limit of \(f\) at \(\xi\)}, denoted
\begin{equation}
  z = \radlim_{g\to\xi} f(g),
\end{equation}
if for any quasi-geodesic ray \(\gamma\colon \RR_{+}\to\Gamma\) with endpoint \(\xi\), we have
\begin{equation}
  z =\lim_{t\to\infty} f(\gamma(t)).
\end{equation}
For \(L\geq 1\), \(C\geq 0\), and \(\xi\in\bd\Gamma\) denote by \(Q_{L,C}(\xi)\) the set of all \((L,C)\)-quasi-geodesic rays in \(\Gamma\) with endpoint \(\xi\). 

\begin{lemma} \label{prop:shadow-convergence-for-quasigeodesics}
  For every \(L\geq 1\) and \(C\geq 0\) there exists \(\rho>0\) such that for every \(\xi\in\bd\Gamma\), and every \(\gamma\in Q_{L,C}(\xi)\), for sufficiently large \(t\) we have \(\xi\in\Sigma(\gamma(t),\rho)\).
\end{lemma}

\begin{proof}
  Let \(\gamma\in Q_{L,C}(\xi)\). Then, for some constants \(L',C'\), depending only on \(L,C\) and the metric \(d\in \mathcal{D}(\Gamma)\), which is by definition quasi-isometric to a word metric,  \(\gamma\) is also an \((L',C')\)-quasi-geodesic ray in some fixed Cayley graph \(X\) of \(\Gamma\). By the Morse Lemma \cite[Theorem III.H.1.7]{Bridson1999}, there exists a geodesic ray \(\gamma'\) in \(X\) within Hausdorff distance \(r\) from \(\gamma\), where \(r\) depends only on \(L',C'\), and the metric on \(X\). 

  Now, by \cite[Theorem A.1]{Blachere2011}, the natural quasi-isometry \(\phi\colon X\to(\Gamma,d)\) (i.e.\ the quasi-inverse of the embedding \(\Gamma\to X\), retracting edges to their endpoints), takes geodesics to quasi-rulers, which by \cite[Lemma A.2]{Blachere2011} can be reparametrized to become rough geodesics. In particular, from \(\gamma'\) we obtain a rough geodesic \(\gamma_{1}\) in \((\Gamma,d)\), within Hausdorff distance \(r'\) of \(\gamma\), where \(r'\) depends on \(r\) and the constants of \(\phi\). Ultimately, since \(X\), \(\phi\), and the metric \(d\in\mathcal{D}(\Gamma)\) are fixed, \(r'\) depends only on \(L\) and \(C\).

  Let \(t\in\RR_{+}\), and denote \(g=\gamma(t)\). There exists \(s\in\RR_{+}\) such that \(d(g,\gamma_{1}(s))\leq 2r'\) and \(\abs{g}\approx_{r'} \abs{\gamma_{1}(s)}\). Furthermore, if \(s\) is sufficiently large, which can be guaranteed by taking sufficiently large \(t\), then
  \begin{equation}
    (\gamma_{1}(s),\xi) \approx \abs{\gamma_{1}(s)},
  \end{equation}
  so
  \begin{equation}
    (\hat{g},\xi) \agtr \min\{ (\hat{g},g),(g,\gamma_{1}(s)), (\gamma_{1}(s),\xi)\} \agtr_{r'} \abs{g},
  \end{equation}
  and finally
  \begin{equation}
    d_{\epsilon}(\hat{g},\xi) \lasymp_{r'} e^{-\epsilon\abs{g}}.
  \end{equation}
  The constant in this estimate yields the required \(\rho\), which depends on \(r'\), and hence only on \(L\) and \(C\).
\end{proof}

Now we are ready to prove our variant of the Lebesgue Differentiation Theorem, along the lines of the standard proof using maximal functions \cite[Section 2.7]{Heinonen2001}.

\begin{proposition}\label{prop:radial-lebesgue-diff}
  Let \(f\in L^{1}(\bd\Gamma,\mu)\). Then for \(\xi\in\bd\Gamma\)
  \begin{equation}\label{eq:lebesgue-diff-convergence}
    \radlim_{g\to \xi} \int \abs{f(\eta)-f(\xi)}\,dg_{*}\mu(\eta) = 0
  \end{equation}
  holds a.e.
\end{proposition}

\begin{proof}
  For \(L\geq 1\), and \(C\geq 0\) let \(E_{L,C}\subseteq\bd\Gamma\) be the set of those \(\xi\) for which
  \begin{equation}
    \lim_{t\to\infty} \int \abs{f(\eta)-f(\xi)}\,d\gamma(t)_{*}\mu(\eta)=0
  \end{equation}
  for all \(\gamma\in Q_{L,C}(\xi)\). The set on which the radial convergence~\eqref{eq:lebesgue-diff-convergence} holds is the intersection of all the sets \(E_{L,C}\). But this intersection is the same as the intersection of a countable subfamily with integer \(L\) and \(C\), so it is enough to show that \(E_{L,C}\) has full measure in \(\bd\Gamma\) for every \(L\) and \(C\).

  Now, we will proceed as in the proof of the classical Lebesgue differentiation theorem. Fix \(L\) and \(C\). For \(f\in L^{1}(\bd\Gamma,\mu)\) define the function \(\Lambda f\) by
  \begin{equation}
    \Lambda f(\xi) = \sup_{\gamma\in Q_{L,C}(\xi)} \limsup_{t\to \infty} \int \abs{ f(\eta)-f(\xi)}\,d\gamma(t)_{*}\mu(\eta).
  \end{equation}
  We need to show that it vanishes almost everywhere.

  We have \(\Lambda(f_{1}+f_{2})\leq \Lambda f_{1} + \Lambda f_{2}\), and if \(f\) is continuous, then \(\Lambda f(\xi) = 0\) by Lemma \ref{prop:weak-convergence-to-dirac}. Moreover, if by Lemma~\ref{prop:shadow-convergence-for-quasigeodesics} we take \(\rho\) such that for every \(\gamma\in Q_{L,C}(\xi)\) we have \(\xi\in \Sigma(\gamma(t),\rho)\) for large \(t\), then we claim that the following estimate is satisfied:
  \begin{equation}
    \Lambda f(\xi) \leq N_{\rho}f(\xi) + \abs{f(\xi)}.
  \end{equation}
  Indeed, for \(\gamma\in Q_{L,C}(\xi)\) we have
  \begin{equation}
    \begin{split}
      \limsup_{t\to \infty} & \int \abs{
        f(\eta)-f(\xi)}\,d\gamma(t)_{*}\mu(\eta) \leq\\ & \leq \limsup_{t\to
        \infty} \int \abs{f(\xi)}\,d\gamma(t)_{*}\mu(\eta)  + \limsup_{t\to
        \infty} \int \abs{f(\eta)}\,d\gamma(t)_{*}\mu(\eta),
    \end{split}
  \end{equation}
  where the first summand is just \(\abs{f(\xi)}\), while in the second one, for sufficiently large \(t\) we have \(\xi\in \Sigma(\gamma(t),\rho)\), and consequently
  \begin{equation}
    \int \abs{f(\eta)}\,d\gamma(t)_{*}\mu(\eta) \leq N_{\rho}f(\xi).
  \end{equation}
  Now, we may write \(f=f_{n}+c_{n}\), where \(c_{n}\) is continuous, and \(\norm{f_{n}}_{1}\to 0\), to get
  \begin{equation}
    \Lambda f \leq \Lambda f_{n} + \Lambda c_{n} \leq N_{\rho}f_{n} + \abs{f_{n}}.
  \end{equation}
  But using the weak \(L^{1}\) estimate from Lemma~\ref{prop:weak-l1-for-maximal-fn} we obtain
  \begin{equation}
    \mu\{ \xi : \Lambda f(\xi) > s\}  \lasymp \frac{1}{s}  \norm{f_{n}}_{1} \xrightarrow[n\to\infty]{} 0,
  \end{equation}
  and the function \(\Lambda f\) vanishes almost everywhere.
\end{proof}

\subsection{Double ergodicity}
\label{sec:double-ergodicity}

We are now ready to show double ergodicity of Patterson-Sullivan measures.

\begin{theorem} \label{prop:dbl-erg}
  Let \(\Gamma\) be a hyperbolic group endowed with a metric \(d\in\mathcal{D}(\Gamma)\) giving rise to the Patterson-Sullivan measure \(\mu\). Then the action of \(\Gamma\) on \((\bd\Gamma,\mu)\) is doubly ergodic.
\end{theorem}

\begin{proof}
  Let \(E\subseteq \bd\Gamma^{2}\) be a \(\Gamma\)-invariant subset of positive measure. By taking the appropriate intersection, we may assume that \(E\subseteq \Omega\). For \(\xi\in\bd\Gamma\) denote
  \begin{equation}
    E_{\xi} = \{ \eta\in\bd\Gamma : (\xi,\eta)\in E\}.
  \end{equation}
  If we assume that \(E\) is not of full measure, i.e.
  \begin{equation}
    \mu^{2}(E) = \int_{\bd\Gamma} \mu(E_{\xi})\,d\mu(\xi) < 1,
  \end{equation}
  then there exists a subset \(A\subseteq \bd\Gamma\) of positive
  measure, and \(\alpha \in (0,1)\), such that for \(\xi\in A\) we
  have \(\mu(E_{\xi}) < \alpha\). Furthermore, by Proposition~\ref{prop:radial-lebesgue-diff}, the set
  \begin{equation}
    \wt A = \{(\xi,\eta)\in E : \xi\in A,\; \radlim_{g\to \eta} \mu(g^{-1}E_{\xi})=1\}
  \end{equation}
  has positive measure.

  Now, choose a bounded fundamental domain \(\Delta\) for the action of \(\Gamma\) on \(\Omega\times\RR\), such that
  \begin{equation}
    B = \Delta\cap ( \wt A\times\RR)
  \end{equation}
  has positive measure. By Lemma~\ref{prop:cocycle-is-quasigeo}, for any \(p=(\xi,\eta,t)\in B\), the map \(s\mapsto \kappa_{\Delta}(p,s)^{-1}\) is a quasi-geodesic ray with endpoint \(\eta\), and we have
  \begin{equation}
    \lim_{s\to\infty} \mu(\kappa(p,s) E_{\xi}) = \lim_{s\to\infty} \int_{E_{\xi}}d\kappa(p,s)^{-1}_{*}\mu= \one_{E_{\xi}}(\eta) = 1.
  \end{equation}
  We may therefore fix \(T>0\) such that the set
  \begin{equation}
    C = \{p=(\xi,\eta,t)\in B : \mu(\kappa(p,s)E_{\xi}) > \alpha\;\text{for \( s > T\)}\}
  \end{equation}
  has positive measure.   By the Poincar\'e Recurrence Theorem applied to the flow \(\Phi^{\Delta}_{s}\), there exists \(s>T\) such that \(\Phi^{\Delta}_{-s}(C)\cap C\ne\emptyset\). We can partition this set into countably many subsets indexed by elements of \(\Gamma\),
  \begin{equation}
    C_{g} = \{ p\in \Phi^{\Delta}_{-s}(C)\cap C : \kappa(p,s)=g\},
  \end{equation}
  and there exists \(g\) such that \(C_{g}\ne\emptyset\). Hence, also its image \(\pr_{1}(C_{g})\) under the projection \(\pr_{1}\colon \bd\Gamma\times\bd\Gamma\times\RR\to\bd\Gamma\) onto the first axis is non-empty. But for \(\xi\in \pr_{1}(C_{g})\) there exist \(\eta\) and \(t\) such that \(p=(\xi,\eta,t)\in C_{g}\subseteq C\), and since we have chosen \(s>T\), by definition of \(C\) we have
  \begin{equation}
    \mu(E_{g\xi}) = \mu(gE_{\xi}) = \mu(\kappa(p,s)E_{\xi}) > \alpha.
  \end{equation}
  To conclude, observe that
  \begin{equation}
     gC_{g} + s = g( C_{g} + s)  = \Phi^{\Delta}_{s}(C_{g}) \subseteq C,
  \end{equation}
  so
  \begin{equation}
    \pr_{1}(gC_{g}) = \pr_{1}(gC_{g}+s) \subseteq \pr_{1}(C) \subseteq \pr_{1}(B) \subseteq \pr_{1}(\wt A\times\RR) = A,
  \end{equation}
  thus for \(\xi\in C_{g}\), by definition of \(A\) we have \(\mu(E_{g\xi}) < \alpha\), contradicting the initial assumption that \(E\) is not of full measure.
\end{proof}

\bibliographystyle{plain}
\bibliography{../../../library}
\end{document}